\newif \ifhavetikz 
\ifhavetikz \usepackage{tikz} \fi
\renewcommand{\AA}{\mathbb{A}}
\newcommand{\CC}{\mathbb{C}}
\newcommand{\LL}{\mathbb{L}}
\newcommand{\NN}{\mathbb{N}}
\newcommand{\PP}{\mathbb{P}}
\newcommand{\QQ}{\mathbb{Q}}
\newcommand{\ZZ}{\mathbb{Z}}
\newcommand{\Pp}{\mathbf{P}}
\newcommand{\cC}{\mathcal{C}}
\newcommand{\fF}{\mathcal{F}}
\newcommand{\gG}{\mathcal{G}}
\newcommand{\hH}{\mathcal{H}}
\newcommand{\iI}{\mathcal{I}}
\newcommand{\jJ}{\mathcal{J}}
\newcommand{\oO}{\mathcal{O}}
\newcommand{\sS}{\mathcal{S}}
\newcommand{\tT}{\mathcal{T}}
\newcommand{\xX}{\mathcal{X}}
\newcommand{\zZ}{\mathcal{Z}}
\newcommand{\ii}{\mathfrak{i}}
\newcommand{\jj}{\mathfrak{j}}
\newcommand{\mm}{\mathfrak{m}}
\newcommand{\wtp}{\mathfrak{w}}
\newtheorem{lemma}{Lemma}
\newtheorem{conjecture}[lemma]{Conjecture}
\newtheorem{corollary}[lemma]{Corollary}
\newtheorem{theorem}[lemma]{Theorem}
\newtheorem{definition}[lemma]{Definition}
\newtheorem{proposition}[lemma]{Proposition}
\theoremstyle{remark} 
\newtheorem*{remark}{Remark}
\newtheorem*{example}{Example}
\DeclareMathOperator{\gr}{gr}
\DeclareMathOperator{\Hilb}{Hilb}
\DeclareMathOperator{\Hom}{Hom}
\DeclareMathOperator{\Exp}{Exp}
\DeclareMathOperator{\Spec}{Spec}
\DeclareMathOperator{\ord}{ord}
\newcommand{\uspp}{\overline{\mathscr{P}}}
\newcommand{\usppa}{\overline{\mathscr{P}}_{\mathrm{alg}}}
\newcommand{\spp}{\mathscr{P}}
\newcommand{\sppa}{\mathscr{P}_{\mathrm{alg}}}
\title[Hilbert schemes and HOMFLY homology]{The Hilbert scheme of a plane curve singularity \\ and the HOMFLY
homology of its link}
\author[A. Oblomkov, J. Rasmussen, and V. Shende; with E. Gorsky]{  Alexei Oblomkov  \and Jacob Rasmussen  \and  Vivek Shende \\ \\
with an appendix by Eugene Gorsky}
\begin{document}

\begin{abstract}
  We conjecture an expression for the dimensions of the Khovanov-Rozansky
  HOMFLY homology groups of the link of a plane curve singularity in terms
  of the weight polynomials of Hilbert schemes of points 
  scheme-theoretically supported
  on the singularity.  The conjecture specializes to our previous
  conjecture  \cite{OS} 
  relating the HOMFLY polynomial to the Euler numbers of the 
  same spaces upon setting $t = -1$.  
  By generalizing results of Piontkowski on the structure of
  compactified Jacobians to the case of Hilbert
  schemes of points, we give an explicit prediction 
  of the HOMFLY homology of a $(k,n)$ torus knot
  as a certain sum over diagrams.  

  The Hilbert scheme series corresponding to 
  the summand of the HOMFLY homology with minimal ``\(a\)'' grading 
  can be recovered from the perverse filtration on
  the cohomology of the compactified Jacobian.  In the case of 
  $(k,n)$ torus knots, this space furnishes the unique finite
  dimensional simple
  representation of the rational spherical Cherednik algebra
  with central character $k/n$.  Up to a conjectural identification
  of the perverse filtration with a previously introduced
  filtration, the work of Haiman and Gordon and
  Stafford gives formulas for the Hilbert scheme series when 
    $k = mn + 1$. 
\end{abstract}

\maketitle

\section{Overview}

Let $X$ be the germ of a complex 
plane curve singularity.  Its topological
properties are captured by its {\em link}, the intersection of a 
representative of $X$ 
with the boundary of a small ball surrounding the singularity 
\cite{AGV,Mil}.

We previously conjectured \cite{OS} 
that the HOMFLY polynomial of the link is recovered from the 
Euler characteristics of certain moduli spaces associated to the singularity.
Specifically, let the HOMFLY polynomial 
$\overline{\Pp}$ be normalized by the following
skein relation: 
\begin{eqnarray}
  \label{eq:skein1}
  a \, \overline{\Pp}(\undercrossing) -  a^{-1} \, 
  \overline{\Pp}(\overcrossing) & = & (q - q^{-1})  
  \, \overline{\Pp}(\smoothing) \\
  a - a^{-1} & = & (q-q^{-1})\overline{\Pp}(\bigcirc)
\end{eqnarray}

We write $X^{[n]}$ for the Hilbert scheme of $n$ points on $X$. 
We define an incidence variety:
\[X^{[l]} \times X^{[l+m]} \supset 
X^{[l\le l + m]} := \{(I,J)\, | \, J  
\supset I  \supset M \cdot J \}\]
where $M$ is the maximal ideal at the central point.

\begin{conjecture} (\cite{OS})
  \label{conj:homfly}
  Let $X$ be the germ of a plane curve singularity, 
  with Milnor number $\mu$. Then
  \begin{equation*}
    (a/q)^{\mu-1} \sum_{l,m}
    q^{2l} (-a^2)^m \chi(X^{[l \le l + m]}) =     
        \overline{\Pp}(\mbox{Link of $X$}).
  \end{equation*}
\end{conjecture}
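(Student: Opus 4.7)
The plan is to reduce the conjecture to an inductive statement that follows the iterated-torus (Puiseux) structure of $X$, since every link of a plane curve singularity arises in this way. The base case is when $X$ is smooth: then $\mu = 0$, $X^{[n]}$ is a single point, and the flag condition $J \supset I \supset M\cdot J$ with $J = (t^l)$ forces $I \in \{(t^l),(t^{l+1})\}$, so the left side simplifies to $(q/a)(1-a^2)/(1-q^2) = (a-a^{-1})/(q-q^{-1})$, which is the HOMFLY series of the unknot. This fixes the normalization.

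Next I would handle the unibranch cuspidal singularities $x^p = y^q$ with $\gcd(p,q) = 1$, whose links are the $(p,q)$-torus knots. The Hilbert scheme carries a $\CC^*$-action whose fixed points correspond to $\Gamma$-submodules of finite colength in the local ring, where $\Gamma = \langle p,q\rangle$ is the semigroup of the singularity. Extending Piontkowski's cell decomposition from the compactified Jacobian to the nested Hilbert scheme $X^{[l\le l+m]}$ would reduce the generating series on the left to an explicit sum over pairs of semigroup-ideals. One then seeks a combinatorial identity matching this sum with the Jones--Rosso formula, or with the Hecke-algebra (Aiston--Morton) expression, for the HOMFLY polynomial of the torus knot.

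The inductive step is the main obstacle. If $X$ is obtained from a simpler singularity $X'$ by an iterated-torus (cabling) operation with parameters $(k,n)$, then the HOMFLY polynomial of the link of $X$ is obtained from that of $X'$ by a satellite (cabling) recursion. The plan is to match this with a corresponding transformation on the Hilbert scheme side, obtained by deforming $X$ to a curve with two branches and stratifying $X^{[l\le l+m]}$ by the intersection multiplicities of $I$ and $J$ with each branch. After accounting for cancellations arising from the incidence condition $I \supset M\cdot J$, the induced generating series should factor into a contribution from $X'$ and a torus-combinatorial factor, precisely recovering the cabling formula.

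The principal difficulty is the absence of a direct skein interpretation on the Hilbert scheme side: the three links in a skein triple are generally not all algebraic, so one cannot mimic the skein induction verbatim, and one must rely on the more rigid cabling recursion. A secondary difficulty is controlling the non-reduced strata of $X^{[l\le l+m]}$, where the scheme-theoretic condition $I \supset M\cdot J$ (rather than a set-theoretic one) is essential; here a torus-equivariant localization, combined with the combinatorial fixed-point description from the unibranch case, seems necessary to keep the Euler-characteristic calculation tractable.
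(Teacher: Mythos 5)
This statement is presented in the paper as a \emph{conjecture} (quoted from \cite{OS}); the paper offers no proof of it, and indeed at the time of writing none was known. Your submission is likewise not a proof but a research program, and you should be clear that its two central steps are exactly where the content of the conjecture lies. Your base case is fine: for smooth $X$ one gets $(q/a)(1-a^2)/(1-q^2)=(a-a^{-1})/(q-q^{-1})$, the unreduced HOMFLY of the unknot. The torus-knot step is partially available: Theorem \ref{thm:dimensions} (and Theorems \ref{thm:unnesteddims}, \ref{thm:nesteddims}) does extend Piontkowski's cell decomposition to the nested Hilbert schemes for a single Puiseux pair, so the left-hand side for $x^p=y^q$ really is an explicit sum over pairs of semigroup ideals. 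But the ``combinatorial identity matching this sum with the Jones--Rosso formula'' is not a routine verification; it is an open combinatorial identity that the paper only checks for $(2,n)$, $(3,n)$ (Corollaries in Section \ref{sec:examples}) and in the stable limit (Proposition \ref{prop:infty}), using the rationality and symmetry constraints of Proposition \ref{prop:rands} to pin down the answer in low cases. You cannot cite it as known.

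The more serious gap is the inductive step. That every algebraic link is an iterated torus link is true, and the HOMFLY polynomial does satisfy a satellite formula; but there is no established ``corresponding transformation on the Hilbert scheme side.'' Your proposal to ``deform $X$ to a curve with two branches'' is problematic on its face: the Euler characteristics $\chi(X^{[l\le l+m]})$ are not a priori invariant under equisingular or other deformations, and the assertion that the generating series ``should factor into a contribution from $X'$ and a torus-combinatorial factor'' is precisely a restatement of the conjecture for cables, not an argument. The only cabling-type statement actually proved in the paper is the blowup proposition, which controls a single operation (a full negative twist, $\sigma\mapsto\sigma\Delta^{-2}$) and only at the level of the extremal coefficient of $a$, via Kálmán's theorem; this falls far short of the full satellite recursion you need. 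So the proposal correctly identifies a plausible strategy (and one broadly in the spirit of how such results are attacked), but both pillars --- the torus-knot combinatorial identity and the Hilbert-scheme cabling formula --- are asserted rather than established, and neither can be extracted from this paper.
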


The object of the present article is to promote this to a homological 
conjecture.  On the right hand side, we replace the HOMFLY polynomial
with the Poincar\'e polynomial of
the triply graded HOMFLY homology of Khovanov and Rozansky \cite{KR}.  
This has several slight variants;
we discuss what is called in \cite{R} the
unreduced homology, and denoted by $\overline{\mathscr{H}}^{i,j,k}(K)$.  It is 
infinite dimensional, though finite in each graded piece.  
We write its graded dimension as
\[\uspp = \sum_{i,j,k} a^i q^j t^k \overline{\mathscr{H}}^{i,j,k}(K).\]
We discuss our grading conventions for \(\overline{\mathscr{H}}\) at the end of this section; for the moment, let us say that the homological grading \(t\) is chosen as in \cite{DGR}, rather than in \cite{KR}, so that  $\overline{\Pp} = \uspp|_{t=-1}$.

Recall that the cohomology of a complex algebraic variety admits a weight
filtration $W$, in terms of which one may form the 
{\em weight polynomial}:\footnote{Other authors
prefer the terms Serre polynomial, virtual Poincar\'e polynomial,
and E-polynomial.  Its existence was conjectured by Serre, and
follows from Deligne's theory of weights and mixed Hodge structures
\cite{Del-MHS, Del-weights}, for some discussion see
\cite{DK,Dur}.}
\[ \mathfrak{w}(X) = \sum_{j,k} (-1)^{j+k} t^k \mathrm{Gr}_W^k 
(\mathrm{H}^j_c(X)). \] 
The weight polynomial is characterized by two properties; first,
it agrees with the Poincar\'e polynomial if $X$ is a proper smooth variety, 
and second, it factors through the Grothendieck ring of varieties.
That is, for $Y$ closed in $X$, we have 
\[\wtp(X - Y) = \wtp(X) - \wtp(Y).\]

\begin{conjecture}
  \label{conj:homflyhomology}
  Let $X$ be the germ of a plane curve singularity, 
  with Milnor number $\mu$. Then
  \begin{equation*}
    (a/q)^{\mu-1} \sum_{l,m}
    q^{2l} a^{2m} t^{m^2} \mathfrak{w} (X^{[l \le l + m]}) =     
        \uspp(\mbox{Link of $X$}).
  \end{equation*}
\end{conjecture}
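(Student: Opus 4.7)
The plan is to refine the decategorification argument suggested for Conjecture~\ref{conj:homfly} and match the two sides grading by grading. At $t=-1$ the weight polynomial reduces to the Euler characteristic and $\uspp$ reduces to $\overline{\Pp}$, so any proof of Conjecture~\ref{conj:homflyhomology} must recover Conjecture~\ref{conj:homfly} as its shadow; conversely, a proof strategy should pick a categorification of whatever geometric argument one has in mind for Conjecture~\ref{conj:homfly}.

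First I would verify the conjecture in enough small cases to pin down the grading conventions, and in particular the unusual $t^{m^2}$ factor. Natural test cases are the smooth point (giving the unknot), nodal and cuspidal singularities, and more generally the $(k,n)$ torus knots treated later in this paper. For torus knots a Piontkowski-type affine paving of the compactified Jacobian can be extended to the nested Hilbert schemes $X^{[l\le l+m]}$, so each affine cell contributes a pure monomial to $\mathfrak{w}$; the HOMFLY homology of torus knots is accessible through the explicit diagrammatic predictions made in later sections of this paper, which lets one check both the individual monomials and the $m^2$ shift.

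The main structural input I would then try to exploit is a braid presentation. The link of $X$ is the closure of a positive braid $\beta$ read off from the Puiseux expansion, and Khovanov-Rozansky HOMFLY homology is computed as the Hochschild homology of the Rouquier complex of Soergel bimodules attached to $\beta$. I would attempt to realize this Rouquier complex geometrically on the Hilbert scheme side: the homological grading on the complex should match $a$, the internal grading match $q$, and the Hochschild grading together with a shift producing $t^{m^2}$ (arising naturally from the self-intersection of an ``extra generators'' locus whose codimension scales with $m$) should match $t$. A torus action on $X^{[l\le l+m]}$ induced by the Puiseux parameterization should give a Białynicki-Birula decomposition whose cell combinatorics agrees with the Soergel side; passing from cell-counts to equivariant/weight information should produce the graded identity rather than just the Euler-characteristic identity.

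The main obstacle lies in this grading match. Promoting Conjecture~\ref{conj:homfly} to Conjecture~\ref{conj:homflyhomology} is analogous to upgrading a character formula to an isomorphism of graded modules: one has to identify geometrically natural origins for the homological grading on both sides and then show they coincide, and the weight filtration on singular Hilbert schemes is not a priori compatible with any obvious filtration on Hochschild homology. I expect this will require constructing a categorified braid group action by Fourier-Mukai-type correspondences on Hilbert schemes of nodal deformations of $X$, rather than a direct cell-by-cell comparison. A secondary obstacle is that Conjecture~\ref{conj:homfly} itself is open in general, so a fully general categorified argument cannot treat it as a black box but must also establish its decategorified shadow along the way.
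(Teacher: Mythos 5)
This statement is a conjecture, and the paper does not prove it; neither do you. What you have written is a research program, not a proof: every step that would carry actual mathematical content is phrased as something that ``should'' happen. You propose to ``realize the Rouquier complex geometrically on the Hilbert scheme side,'' to find a torus action whose Bia{\l}ynicki--Birula cells ``agree with the Soergel side,'' and to match the Hochschild grading with the weight grading ``with a shift producing $t^{m^2}$'' --- but none of these objects is constructed and no comparison is carried out. The central difficulty, identifying the weight filtration on $\mathrm{H}^*_c(X^{[l\le l+m]})$ with the homological grading on Khovanov--Rozansky homology, is correctly named in your final paragraph, but naming the obstacle is not the same as overcoming it. There is also a circularity in your verification plan: for general $(k,n)$ torus knots the ``explicit diagrammatic predictions made in later sections'' are formulas for the \emph{left-hand} side $\usppa$, not independent computations of $\overline{\mathscr{H}}$, so comparing against them tests nothing. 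At the time of writing, the HOMFLY homology needed for a genuine check was rigorously known essentially only for the $(2,n)$ family and a few small knots.

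For what it is worth, the evidence-gathering portion of your plan does track what the paper actually does to support the conjecture: Proposition~\ref{prop:rands} shows the left-hand side has the rationality and $q\mapsto 1/qt$ symmetry expected of $\uspp$; Theorems~\ref{thm:unnesteddims} and~\ref{thm:nesteddims} produce the affine pavings of $X^{[l\le l+m]}$ for one Puiseux pair, so that $\mathfrak{w}$ is computed by cell counting; Proposition~\ref{prop:infty} matches the stable superpolynomial of \cite{DGR}; and the corollaries in Section~\ref{sec:examples} verify the $(2,n)$ case against the known answer of \cite{KhSoergl} and the $(3,n)$ case against the conjectural formula of \cite{DGR}. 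That is consistency evidence, and it is all the paper claims. If you intend to present an actual proof, you must either restrict to a family where both sides are independently computable and exhibit the grading match explicitly, or supply the missing geometric construction of the braid-group action; as it stands your proposal contains no proof of any case of the conjecture.
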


Throughout we write $\usppa$
for the LHS of Conjecture \ref{conj:homflyhomology}. 
As in \cite{OS} there is a useful equivalent formulation obtained 
by pushforward along the forgetful map $X^{[l\le l+m]} \to
X^{[l]}$.  By
Nakayama's lemma, the fibres are $\mathrm{Grass}(\CC^m \subset \CC^r)$ over 
the locus $X^{[l]}_r \subset X^{[l]}$
parameterizing subschemes whose ideals require $r$ generators. 
The weight polynomial of this Grassmannian is given by  
the q-binomial coefficient ${r \choose m}_{t^2}$.  Thus
by the identity
\[\sum_{m=0}^r {r \choose m}_{t^2}  a^{2m} t^{m^2} = \,\, 
  \prod_{k=1}^r (1+t^{2k-1} a^2)
\]
we may rewrite
\[\sum_{l,m} q^{2l} a^{2m} t^{m^2} \wtp(X^{[l\le l+m]}) = 
\sum_{l,r} q^{2l} \wtp(X^{[l]}_r) \prod_{k=1}^r (1+t^{2k-1} a^2) .\]

The series $\usppa$ enjoys the following symmetry and rationality properties:

\begin{proposition} \label{prop:rands} 
Let $X$ have $b$ branches and contribute
$\delta$ to the arithmetic genus.  Then the expression
$(q^{-1}-q)^b \usppa$ is a Laurent polynomial in
$q$ with terms of degrees  between $-2\delta$ and $2\delta$.  Moreover,
it is invariant under $q \to 1/qt$.  The number of different powers
of $a^2$ appearing is at most one more than the multiplicity of the singularity.
\end{proposition}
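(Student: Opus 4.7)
My plan is to use throughout the pushforward form
\[\usppa = (a/q)^{\mu-1} \sum_{l,r} q^{2l}\,\wtp(X^{[l]}_r)\,\prod_{k=1}^r(1+t^{2k-1}a^2)\]
recorded in the preceding discussion, starting with the most straightforward assertion. The bound on powers of $a^2$ follows at once: the factor $\prod_{k=1}^r(1+t^{2k-1}a^2)$ exhibits exactly $r+1$ distinct powers of $a^2$, and for any finite-colength ideal $I$ in the Cohen--Macaulay ring $\mathcal{O}_{X,0}$ one has $\mu(I)\le\mathrm{mult}(X)$ (put the defining equation of $X$ in Weierstrass form, so that $\mathcal{O}_{X,0}$ is free of rank $\mathrm{mult}(X)$ over $\CC[[x]]$, and apply Nakayama to $I/xI$). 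The overall shift by $(a/q)^{\mu-1}$ is uniform in $a$.

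To prove that $(q^{-1}-q)^b\usppa$ is a Laurent polynomial in $q$, I would lift the Euler-characteristic argument of \cite{OS} to weight polynomials. The geometric input is a cellular stratification of each $X^{[l]}_r$ by affine spaces, indexed by semigroup submodules of $\mathcal{O}_{X,0}$---Piontkowski's construction in the unibranch case, with an analogous combinatorial construction needed for reducible $X$. Weight polynomials are additive on stratifications and send an affine space to a monomial in $t^2$, so the generating function reduces to a combinatorial sum over such modules. The asymptotic structure along the $b$ coordinate rays of $\ZZ^b_{\ge 0}$ produces the $b$ factors of $(1-q^2)^{-1}$ that $(q^{-1}-q)^b$ cancels. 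The $q$-degree range $[-2\delta,2\delta]$ comes from identifying the leading part of the numerator with (a shift of) the weight polynomial of the compactified Jacobian---compact of dimension $\delta$---together with the centering provided by the prefactor $(a/q)^{\mu-1}$ and Milnor's relation $\mu=2\delta-b+1$.

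For the symmetry $q\mapsto 1/(qt)$ I would invoke Poincar\'e duality on a compactification: realize $X$ as the unique singularity of a projective reduced curve $\overline X$, so that each $\overline X^{[n]}$ is smooth projective of dimension $n$ and its weight polynomial satisfies $\wtp(\overline X^{[n]})(t)=t^{2n}\wtp(\overline X^{[n]})(t^{-1})$. The global generating series factors as the local contribution at $X$ times the symmetric-product series for the smooth locus, the latter satisfying the same symmetry manifestly; matching functional equations forces $\usppa$ to be invariant under $q\mapsto 1/(qt)$.

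The main technical obstacle is the affine cellular stratification of $X^{[l]}_r$ in the reducible case and for arbitrary generator count $r$; once this is in place, the remainder consists of weight-polynomial bookkeeping parallel to the Grothendieck-group calculations of \cite{OS}, since weight polynomials are additive on locally closed decompositions and multiplicative on Zariski-locally trivial affine bundles.
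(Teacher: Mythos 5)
Your argument for the bound on the powers of $a^2$ is correct and is essentially the paper's (which cites the same fact, that any finite-colength ideal in $\oO_X$ needs at most $\mathrm{mult}(X)$ generators). The other two assertions, however, have genuine gaps.

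For Laurent polynomiality and the degree bound, you propose to stratify each $X^{[l]}_r$ by affine cells indexed by semigroup modules. Such decompositions are only known for very special singularities (the paper establishes them in Section 3 only for unibranch singularities with one Puiseux pair, and explicitly warns that for general singularities ``the projections are not surjective and the fibers are hard to control''); for a general plane curve germ, let alone a reducible one, no such cell decomposition is available, and you acknowledge this as ``the main technical obstacle'' without resolving it. The paper avoids this entirely: it works in the Grothendieck ring of varieties, passes to a rational curve $C$ with unique singularity $X$, and fibers $C^{[d]}$ over the compactified Jacobian $\overline{J}^0(C)$ via $I\mapsto I^*\otimes\oO(-d)$. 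Over the strata where the Hilbert function $h_\fF(d)=\dim\mathrm{H}^0(C,\fF\otimes\oO(d))$ is constant, this map is a projectivized vector bundle, and the elementary combinatorics of Hilbert functions of rank one torsion-free sheaves (supported in $[0,\infty)$, equal to $d+1-g$ beyond $2g-2$, increasing by $0$ or $1$) shows that $(1-q)(1-q\LL)\sum q^d[C^{[d]},\phi]$ is a polynomial of degree at most $2g=2\delta$. No cellularity is needed, only additivity of $\wtp$ on the Hilbert-function stratification.

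The symmetry argument is where the proposal actually fails. First, for a singular curve $\overline X$ the Hilbert schemes $\overline X^{[n]}$ are \emph{not} smooth (they are singular exactly along the loci $X^{[l]}_r$ with $r\ge 2$ that your own formula keeps track of), so Poincar\'e duality in the form $\wtp(\overline X^{[n]})(t)=t^{2n}\wtp(\overline X^{[n]})(t^{-1})$ is unavailable. Second, and independently, even if each $\wtp(\overline X^{[n]})$ were palindromic in $t$, that is a symmetry of each coefficient separately and does not produce the functional equation $q\mapsto 1/qt$, which pairs the coefficient of $q^{2d}$ with that of $q^{2(2\delta-d)}$. The correct mechanism is Serre duality on the Gorenstein curve: the involution $\fF\mapsto\fF^*\otimes\omega_C\otimes\oO(2-2g)$ sends a Hilbert function $h$ to $h^\vee(d)=h(2g-2-d)+d+1-g$, giving $q^{2g}\LL^g Z_h(1/q\LL)=Z_{h^\vee}(q)$; for this to descend to the weighted series one needs the nontrivial input (proved in \cite{OS} via Auslander--Buchsbaum) that the generator-counting function satisfies $\Phi(\fF^*)=\Phi(\fF)$. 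Both of these ingredients are absent from your sketch, so the step ``matching functional equations forces the invariance'' does not go through as written.
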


The polynomiality is known to hold for $\uspp$ -- for unibranch singularities the scaling factor
corresponds to taking reduced HOMFLY homology -- and
the invariance was predicted in \cite{DGR},
but remains conjectural. 
In the specialization of $\uspp$ 
to the HOMFLY polynomial, the symmetry $q \to -1/q$ is manifest in the
skein relation.  
 The bound on the degrees of $a$ which appear
corresponds to the fact that a singularity admits a braid presentation 
in which the number of strands is equal to the multiplicity.

\vspace{2mm}

It may be viewed as a defect that $\usppa$ is assembled from the
cohomologies of many different spaces.  In fact, 
 the coefficient of the lowest degree power of $a$,
\[\usppa^{\min{}}:= q^{1-\mu} \sum_{\ell = 0}^{\infty}
q^{2\ell} \wtp(X^{[\ell]})\] 
may be recovered 
from a single space, namely the compactified Jacobian.  As the previous 
proposition suggests, it is convenient to consider 
$$\sppa^{\min{}}:=(q^{-1} - q)^b \usppa^{\min{}}$$ where $b$ is the number
of analytic local branches of the singularity.  
If $C$ is a rational curve with $X$ as its unique
singularity, then we have
\[\sppa^{\min{}} = q^{-2g} (1-q^2)(1-q^2t^2) 
\sum_{\ell = 0}^\infty q^{2\ell} \wtp (C^{[\ell]}).\]

Let $\overline{J}^\ell(C)$ be the moduli space of rank one, 
degree $\ell$, 
torsion free sheaves on $C$; it is integral of 
dimension $g$ and locally a complete intersection \cite{AK, AIK}.
The choice
of a degree $\ell$ line bundle identifies $\overline{J}^\ell(C)$ with $\overline{J}^0(C)$.
We henceforth suppress the index for 
$\overline{J}(C) := \overline{J}^0(C)$, which
we term the {\it compactified Jacobian} of $C$. 
There is a map $C^{[\ell]}
\to \overline{J}^\ell(C)$ given by sending a subscheme to the dual of
the ideal sheaf cutting it out.   For $\ell \gg 0$ this map 
is a projective space bundle and so 
\(\mathfrak{w}(C^{[\ell]})\) is determined by \(\mathfrak{w}(\overline{J}(C))\).  In fact, work of  Maulik and Yun \cite{MY}, or of Migliorini and the third author \cite{MS}, shows that we can recover the entire series \(\sppa^{\min}\) from the
perverse filtration on the cohomology of \(\overline{J}_0(C)\). 
Specifically, according to \cite{FGvS} there is a
deformation 
$\pi:\cC \to B$ 
such that the total space $\overline{\jJ}$ of
the relative compactified Jacobian $\overline{\jJ} \to B$
is nonsingular. 
According to \cite{BBD} there is a 
decomposition $\mathrm{R} \pi_* \CC_{\overline{\jJ}}[g+\dim B] = 
\bigoplus \fF^i[-i]$ where the $\fF_i$ are perverse
sheaves and $i = -g, \ldots, g$.  
We write ${}^p \mathrm{H}^i(\overline{\jJ}_b):=
\fF^i|_{b}[-\dim B]$; this is a complex of vector spaces
carrying a weight filtration.  It can be shown
that this does not depend on the family $\cC$,
and that moreover: 

\begin{proposition}[\cite{MY,MS}]\label{prop:perverse}
$\displaystyle \sppa^{\min} = \sum\limits_{i=-g}^{g} q^{2i} \mathfrak{w}({}^p \mathrm{H}^i (\overline{J}^0(C))$
\end{proposition}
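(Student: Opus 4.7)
\medskip

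\noindent\textbf{Proof plan.}
My plan is to apply the Beilinson--Bernstein--Deligne--Gabber decomposition theorem simultaneously to the relative compactified Jacobian $\pi: \overline{\jJ} \to B$ and to the relative Hilbert schemes $\pi^{[\ell]}: \cC^{[\ell]} \to B$, using the relative Abel--Jacobi map together with a support theorem to transfer information between them.  The fundamental input is the smoothness of the total spaces $\overline{\jJ}$ and $\cC^{[\ell]}$ in the FGvS family, which ensures BBD applies in its cleanest form and produces pure perverse sheaves.

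I would begin by observing that for $\ell$ sufficiently large the relative Abel--Jacobi map $AJ_\ell: \cC^{[\ell]} \to \overline{\jJ}^\ell$ is a Zariski-locally trivial $\PP^{\ell-g}$-bundle.  The projective bundle formula
$$R(AJ_\ell)_* \CC_{\cC^{[\ell]}} \;=\; \bigoplus_{j=0}^{\ell-g}\CC_{\overline{\jJ}}(-j)[-2j],$$
combined with the BBD decomposition $R\pi_* \CC_{\overline{\jJ}}[g+\dim B] = \bigoplus_i \fF^i[-i]$, expresses the perverse cohomology sheaves of $\pi^{[\ell]}$ in terms of Tate twists and shifts of the $\fF^i$.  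Taking stalks at the central point of $B$ and extracting weight polynomials then gives, for $\ell \gg 0$, a formula for $\wtp(C^{[\ell]})$ as a linear combination of the $\wtp({}^p\mathrm{H}^i(\overline{J}(C)))$'s, with $t^{2j}$ coming from the Tate twists.

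The main obstacle is extending this description from $\ell \gg 0$ to all $\ell \geq 0$, which is essential since $\sppa^{\min}$ is assembled from the entire sequence of Hilbert schemes.  This would be accomplished via a \emph{support theorem} asserting that every simple summand of the BBD decomposition of $R\pi_* \CC_{\overline{\jJ}}$ has strict support equal to all of $B$.  In the planar-curve setting such a theorem was proved by Migliorini and the third author via a detailed analysis of the FGvS family, and independently by Maulik--Yun; both proofs rely crucially on the smoothness of $\overline{\jJ}$.  A parallel support theorem for the Hilbert scheme families $\pi^{[\ell]}$ then forces the perverse cohomology description to hold, in an appropriate refined form, for every $\ell \geq 0$, since perverse sheaves with full support on $B$ are determined by their behavior over any dense open subset.

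Finally, I would assemble the generating function $\sum_{\ell \geq 0} q^{2\ell} \wtp(C^{[\ell]})$ from the extended decomposition and sum the resulting geometric series; after the substitution $i = i' + \ell - g - 2j$ to rearrange the triple sum over $(i', \ell, j)$, this should yield
$$\sum_{\ell \geq 0} q^{2\ell}\, \wtp(C^{[\ell]}) \;=\; \frac{q^{2g}}{(1-q^2)(1-q^2 t^2)}\sum_{i=-g}^{g} q^{2i}\, \wtp({}^p\mathrm{H}^i(\overline{J}(C))),$$
with the factor $q^{2i}$ arising from the shift of summation variables.  Multiplying through by $q^{-2g}(1-q^2)(1-q^2t^2)$ then gives the stated formula.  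The chief technical difficulty throughout is the support theorem, which is the heart of both the MY and MS proofs.
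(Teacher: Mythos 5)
Your plan coincides with the paper's own sketch of this result (which it quotes from Maulik--Yun and Migliorini--Shende): apply BBD to the smooth FGvS families, invoke support theorems to show that no summand of $R\pi^J_*\CC$ or $R\pi^{[d]}_*\CC$ is supported in positive codimension (Ngô's theorem for the Jacobian; MY/MS for the Hilbert schemes, which is the hard geometric content), and reduce to the generic fibre where the identity becomes Macdonald's formula for smooth curves. The only inessential difference is that once the support theorems are in hand the reduction to the generic point handles every $\ell$ uniformly, so the $\PP^{\ell-g}$-bundle structure of the Abel--Jacobi map for $\ell\gg 0$ is not needed as a separate step.
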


Suppose \(X\) is a unibranch singularity, and let \(K\) be its link. Let \( {\mathscr{H}}^{\min}(K)\) denote the part of the {\it reduced} HOMFLY homology with minimal \(a\)-grading (\(=\mu\)). Then in combination with Proposition~\ref{prop:perverse}, 
Conjecture~\ref{conj:homflyhomology} would imply that 
$${\mathscr{H}}^{\min}(K) \cong \mathrm{H}^*(\overline{J}^0(X)).$$
The homological grading on the LHS is identified  with the weight grading on the right, and the \(q\)-grading on the left with the perverse filtration on the right. 

\vskip2mm

To calculate $\usppa$, it is necessary to work out the 
deformation theory of ideals and nested ideals inside the local
ring of functions for the singularity in question.  In the case
when the singularity is described by a single Puiseux pair, {\it e.g.}  
if it is of the form $y^k = x^n$, we can reduce the calculation to
(nontrivial)  combinatorics. The argument 
  is similar  to Piontkowski's  calculation \cite{Pi} of the stratification
  of the compactified Jacobian of a rational curve with this singularity.

 Let 
$\Gamma_{k,n} = \{ak + bn\,|\,a,b \in \ZZ_{\ge 0} \}$ be the semigroup
of degrees of elements of $\CC[[t^n,t^k]]$.  We say
$\mathfrak{i} \subset \Gamma_{k,n}$ is a semigroup ideal if it is closed
under adding elements
from $\Gamma_{k,n}$.  

\begin{theorem}\label{thm:dimensions}
  Let $X_{k,n}$ be the germ of a singularity whose link
  is a $(k,n)$ torus knot, and let $\Gamma = \Gamma_{k,n}$. 
  Then $X_{k,n}^{[l \le l + m]}$ is stratified by linear spaces 
  enumerated by nested pairs of semigroup ideals
  $\mathfrak{j} \supset \mathfrak{i} \supset \mathfrak{j} +
  \{ak + bn\,|\,a,b \in \ZZ_{>0}\}$ such that
  $\# (\Gamma \setminus \mathfrak{j}) = l$ and $\# 
  (\mathfrak{j} \setminus \mathfrak{i}) = m$.  We write
  $N(\mathfrak{j} \supset \mathfrak{i})$ for the dimension
  of this linear space. 

  Fix $\mathfrak{j} \supset \mathfrak{i}$ and let
  $\{\gamma_1, \ldots, \gamma_r\}$ be the unique minimal
  subset of $\mathfrak{j}$ which generates it as a
  $\Gamma$ module.  
  Let $\sigma \mathfrak{j}$ be the set of all
  elements of $\mathfrak{j}$ with more than one expression
  of the form $j + \gamma$ with $j \in \mathfrak{j}, \gamma \in \Gamma$.
  Then $\sigma \mathfrak{j}$ is again a semigroup ideal requiring 
  $r$ generators, say $s_1,\ldots,s_r$, and
  \[ 
  N(\mathfrak{i} \supset \mathfrak{j}) =
  \sum_{\gamma_i \in \ii} \#(\Gamma_{> \gamma_i} \setminus \mathfrak{i})
  + 
  \sum_{\gamma_i \notin \ii} \#(\Gamma_{> \gamma_i} \setminus \mathfrak{j})
  - 
  \sum_{a=1}^r \#(\Gamma_{> s_a} \setminus \mathfrak{i}).
  \]
\end{theorem}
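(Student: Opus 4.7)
My plan is to adapt Piontkowski's \cite{Pi} stratification of the compactified Jacobian of a $(k,n)$ unibranch singularity to the nested Hilbert scheme, by tracking both valuation semigroup ideals $\ii = v(I)$ and $\jj = v(J)$ simultaneously.

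First I would set up the stratification. For $(I,J)\in X_{k,n}^{[l\le l+m]}$, the containments $MJ\subset I\subset J$ translate at the level of valuations to $\jj\supset\ii$ together with $\jj+(\Gamma\setminus\{0\})\subset\ii$, and the colength conditions give $\#(\Gamma\setminus\jj)=l$, $\#(\jj\setminus\ii)=m$. The loci $S_{\jj,\ii}$ defined by fixing this data are locally closed and partition $X_{k,n}^{[l\le l+m]}$. The condition $\jj+(\Gamma\setminus\{0\})\subset\ii$ forces every element of $\jj\setminus\ii$ to be a minimal $\Gamma$-generator of $\jj$; consequently, of the generators $\gamma_1<\cdots<\gamma_r$ of $\jj$, exactly $m$ lie in $\jj\setminus\ii$ and the remaining $r-m$ lie in $\ii$.

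Next I would put $(I,J)$ into normal form. Choose lifts $g_i\in J$ with $v(g_i)=\gamma_i$. By subtracting $R$-multiples of the other $g_j$, we may normalize $g_i$ so that every correction term has valuation outside $\jj$; when $\gamma_i\in\ii$, we may further subtract $R$-multiples of the other generators of $I$ to force the corrections to avoid $\ii$ instead. This yields a parametrization by $\#(\Gamma_{>\gamma_i}\setminus\ii)$ free coefficients for each $\gamma_i\in\ii$ and $\#(\Gamma_{>\gamma_i}\setminus\jj)$ free coefficients for each $\gamma_i\notin\ii$, reproducing the first two sums in the dimension formula. The ideal $J$ is then $Rg_1+\cdots+Rg_r$ and $I=MJ+\sum_{\gamma_i\in\ii}Rg_i$, so the pair $(I,J)$ is determined by these coefficients.

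The final step, and the main source of difficulty, is the syzygy analysis. I would first prove the combinatorial claim that $\sigma\jj$ is a semigroup ideal with exactly $r$ minimal generators $s_1,\ldots,s_r$; this is a statement about the numerical semigroup $\Gamma_{k,n}$ that uses $\gcd(k,n)=1$, essentially identifying $s_a$ as the smallest degree in which two distinct expressions $\gamma_i+\gamma$ can occur. Each leading-order syzygy $\sum h_{a,i}t^{\gamma_i}=0$ lifts to a relation $\sum h_{a,i}g_i=0$ among the actual generators; expanding this order-by-order in the correction coefficients should produce $\#(\Gamma_{>s_a}\setminus\ii)$ independent constraints. The count is against $\ii$ rather than $\jj$ because the lifted syzygy must lie in $I$, so the only meaningful constraints occur at degrees not already expressible via the chosen generators of $I$. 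The two points to verify carefully, both of which are the heart of the argument, are (a) that the constraint equations are genuinely \emph{linear} in the correction parameters, rather than polynomial --- this is where the single-Puiseux-pair structure of $\Gamma_{k,n}$ is essential, exactly as in Piontkowski's argument; and (b) that the constraints from the $r$ different syzygies are jointly independent, so that $S_{\jj,\ii}$ is cut out as a single affine space of the advertised dimension. Part (a) will require the hardest bookkeeping, since corrections at one generator interact through the syzygies with corrections at all others, and the nested setting requires keeping $\ii$ and $\jj$ separate throughout.
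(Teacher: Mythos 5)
Your plan follows the paper's strategy (which is Piontkowski's, extended to nested ideals): stratify by the pair of valuation semigroup modules, parametrize each stratum by correction coefficients of normalized generators, and then impose the lifting of syzygies. The setup is right, including the observation that every element of $\jj\setminus\ii$ is a minimal generator of $\jj$ and the reduction of the constraint count to $\ii$ rather than $\jj$ (in the paper this appears as the observation that $\ord(J)=\jj$ follows automatically once $\ord(I)=\ii$, so one only needs the syzygies of $\ii$ to lift; the extra Koszul-type syzygies of $\ii$ in degrees $\gamma_i+n+k$ lift for free, leaving only the $r$ syzygies in degrees $\sigma_1,\dots,\sigma_r$ inherited from $\jj$).

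The gap is that your points (a) and (b) are the entire content of the theorem, and (a) is framed in a way that would fail. The constraint equations are \emph{not} genuinely linear in the correction parameters; they are polynomial. What the paper does instead is introduce auxiliary deformation parameters $\nu$ for the syzygies themselves, filter everything by the order $r$ of the coefficient being examined, and observe that each equation has the triangular form $Eq_i^r = L_i^r + (\text{terms in }\lambda^{<r},\nu^{<r})$ with $L_i^r$ linear in the top-order variables only. The stratum is then built as a tower of affine fibrations $X^{[\jj\supset\ii]}_{<r+1}\to X^{[\jj\supset\ii]}_{<r}$, which requires no linearity of the full system. Moreover, the single-Puiseux-pair hypothesis is not what makes the equations linear --- it is what makes the leading forms $L_i^r$ (for $r+\sigma_i\in\Gamma\setminus\ii$) \emph{independent}, which is your point (b) and is where all the work is. The paper proves this by encoding the coefficients of the $L_i^r$ in an $m\times m$ circulant-type matrix, deleting columns according to whether $r+\sigma_i\in\Gamma\setminus\ii$, and showing via the staircase picture that the translate of the infinite staircase by $r$ cannot lie entirely below the original one (else $\jj+r\subset\jj$ with $r>0$), so the matrix is block upper-bidiagonal with blocks of maximal rank. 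Your proposal gives no mechanism for proving this independence, and without it the strata need not be affine spaces of the stated dimension --- indeed the paper notes that for general unibranch singularities the analogous projections fail to be surjective. To complete the argument you would need to supply the order-by-order filtration and the staircase-translation combinatorics (or a substitute for it).
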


\begin{remark} 
 The existence of this stratification implies
 that there
  are no cancellations among monomials of $\usppa(X_{k,n})$ when setting
  $t=-1$.  It is an interesting question whether the analogous statement holds for all algebraic knots. 
\end{remark}

Using the formulas above, 
$\usppa$ can in principle be computed by
summing up the contributions; the computation is finite because
for any $\mathfrak{i} \subset c + \NN \subset \Gamma_{m,n}$, we have
$N(\mathfrak{i}) = N(\mathfrak{i}+1)$ and similarly for the nested case.  
In a certain limit, the formulas simplify: 

\begin{proposition}\label{prop:infty}
Write $X_{n,k}$ for the germ at the origin of
$y^n = x^k$ and  $X_{n,\infty}$ for that of  $y^n = 0$.  Then
\[ 
\sum_{\ell,m}
q^{2\ell} a^{2m} t^{m^2} \wtp(X_{n,k}^{[\ell \le \ell + m]})
=
\sum_{\ell,m}
q^{2\ell} a^{2m} t^{m^2} \wtp(X_{n,\infty}^{[\ell \le \ell + m]})
\,\,\, + \,\,\, O(q^{2k})\]
and we calculate
\[ 
\sum_{\ell,m}
q^{2\ell} a^{2m} t^{m^2} \wtp(X_{n,\infty}^{[\ell \le \ell + m]})
= \prod_{i=1}^{n} \frac{1+a^2 q^{2i-2} t^{2i-1}}
{1-q^{2i}t^{2i-2}}.
\]
\end{proposition}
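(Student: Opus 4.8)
The plan is to establish the two displayed identities in turn: the first is a truncation argument, and the second reduces, via Theorem~\ref{thm:dimensions}, to a generating‑function computation. For the first, write $\mathfrak m=(x,y)$ for the maximal ideal. If $(I,J)\in X^{[\ell\le\ell+m]}$ then $J$ has colength $\ell$, so $J\supseteq\mathfrak m^{\ell}$, whence $I\supseteq\mathfrak m J\supseteq\mathfrak m^{\ell+1}$; thus \emph{both} $I$ and $J$ contain $\mathfrak m^{\ell+1}$, and the scheme $X^{[\ell\le\ell+m]}$ is unchanged if the local ring is replaced by its Artinian quotient modulo $\mathfrak m^{\ell+1}$ (the conditions $J\supseteq I\supseteq\mathfrak m J$ and the two colengths are all visible there). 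In $\CC[[x,y]]$ one has $x^{k}\in(x,y)^{\ell+1}$ as soon as $k\ge\ell+1$, so for $\ell<k$ the ideals $(y^{n})+\mathfrak m^{\ell+1}$ and $(y^{n}-x^{k})+\mathfrak m^{\ell+1}$ coincide, and hence $X_{n,k}^{[\ell\le\ell+m]}\cong X_{n,\infty}^{[\ell\le\ell+m]}$. Therefore the two weight‑polynomial series have the same coefficient of $q^{2\ell}$ for every $\ell<k$, which is exactly the claimed equality up to $O(q^{2k})$.

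For the second identity, to compute the coefficient of a given $q^{2\ell}$ on the left one may, by the isomorphism just proved, choose a prime $k>\max(\ell,n)$ and work with the genuine torus‑knot singularity $X_{n,k}$, to which Theorem~\ref{thm:dimensions} applies. As $k\to\infty$ the semigroup $\Gamma_{k,n}$ stabilizes in any bounded range of degrees, and the semigroup ideals $\mathfrak j\supseteq\mathfrak i$ with $\#(\Gamma\setminus\mathfrak j)$ bounded are identified with the monomial ideals of $\CC[[x,y]]/(y^{n})$, i.e.\ with partitions $\mu=(\mu_1\ge\cdots\ge\mu_n\ge0)$ of $\ell$ into at most $n$ parts, a nested pair being such a partition together with a subset $T$ of its minimal generators (one generator per ``corner''). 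In this limit $\sigma\mathfrak j$, the generators $s_a$, and $N(\mathfrak i\supseteq\mathfrak j)$ all stabilize; with the grading inherited from $\Gamma_{k,n}$ (in which $\deg y\gg\deg x$) one checks, for instance, that the non‑nested cell has dimension $N(\mathfrak j)=|\mu|-\mu_1$. Alternatively one can rerun Piontkowski's argument \cite{Pi} directly for $\CC[[x,y]]/(y^{n})$, where the relevant syzygies are completely explicit. Passing to the difference sequence $\nu_i:=\mu_i-\mu_{i+1}$ ($i=1,\dots,n$, $\mu_{n+1}:=0$) makes these invariants linear in $\nu$: $\ell=\sum_i i\,\nu_i$, $N(\mathfrak j)=\sum_i(i-1)\nu_i$, and a corner occurs at column $i$ exactly when $i=1$ or $\nu_{i-1}>0$; the nested correction to $N$ and the $t^{m^{2}}$ factor must be rewritten in the same variables.

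Collecting cell dimensions, the left‑hand side of the second identity equals $\sum_{(\mathfrak j,T)}q^{2\ell(\mathfrak j)}a^{2|T|}t^{|T|^{2}+2N(\mathfrak i_T\supseteq\mathfrak j)}$, the sum running over monomial ideals $\mathfrak j$ and subsets $T$ of their minimal generators (with $\mathfrak i_T=\mathfrak m\mathfrak j+\langle\text{generators not in }T\rangle$); by the pushforward reformulation in the text this may also be written $\sum_{\mathfrak j}q^{2\ell(\mathfrak j)}t^{2N(\mathfrak j)}\prod_{j=1}^{r(\mathfrak j)}(1+t^{2j-1}a^{2})$. After the substitution $\mu\leftrightarrow(\nu_1,\dots,\nu_n)$ the goal is to exhibit this as an unconstrained product over $n$ ``slots'' $i=1,\dots,n$, slot $i$ contributing the geometric factor $\sum_{h\ge0}q^{2ih}t^{(2i-2)h}=(1-q^{2i}t^{2i-2})^{-1}$ together with a binary factor $1$ or $a^{2}q^{2i-2}t^{2i-1}$, so that the $n$ slots multiply to $\prod_{i=1}^{n}\frac{1+a^{2}q^{2i-2}t^{2i-1}}{1-q^{2i}t^{2i-2}}$. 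I expect the main obstacle to be precisely this bookkeeping: one must pin down the correspondence between the minimal generators of $\mathfrak j$ --- \emph{with the order in which they enter} $\prod_{j=1}^{r}(1+t^{2j-1}a^{2})$ --- and the $n$ slots (the delicate points being the ``mandatory'' corner at column $1$, and the fact that selecting a generator shifts the remaining invariants), and then check that $N(\mathfrak i_T\supseteq\mathfrak j)$ from Theorem~\ref{thm:dimensions} together with the $t^{m^{2}}$ from the Gaussian binomial reassembles slot by slot into the exponents $2(i-1)$ and $2i-1$. A useful sanity check is that on setting $a=0$ one is left with $\sum_{\mu}q^{2|\mu|}t^{2(|\mu|-\mu_1)}=\prod_{i=1}^{n}(1-q^{2i}t^{2i-2})^{-1}$ (sum over partitions with at most $n$ parts), immediate from the change of variables $\mu\leftrightarrow\nu$. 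One should also note that, $X_{n,\infty}^{[\ell]}$ being singular, the affine‑cell decomposition is to be used in the form supplied by Theorem~\ref{thm:dimensions} --- attracting loci which happen to be linear --- rather than the naive Bialynicki-Birula statement, and that carrying $m$ through the nested Hilbert schemes, where it is locally constant, is what makes the passage from cells to the displayed formula clean.
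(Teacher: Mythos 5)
Your route is the same as the paper's: prove the first identity by a truncation argument, then compute the stable series by summing the affine cells supplied by Theorem~\ref{thm:dimensions} for $X_{n,k}$ with $k$ large. The first identity is handled correctly (your version, working modulo $\mathfrak{m}^{\ell+1}$, is if anything slightly more careful than the paper's, which observes that all ideals occurring in colength $\le \ell+m$ with $\ell<k$ contain $(x,y)^{k}$, modulo which $y^n=x^k$ and $y^n=0$ agree). Your setup for the second identity is also the paper's: it parametrizes the admissible nested pairs by a base staircase $S=(n^{s_n},\dots,1^{s_1})$ together with a subset $\Sigma\subset\{1,\dots,n\}$ of column heights to increment (your ``partition plus subset $T$ of minimal generators''), and your change of variables $\nu_i=\mu_i-\mu_{i+1}$ is exactly what converts the sum over $S$ into $\prod_i(1-q^{2i}t^{2i-2})^{-1}$.

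There is, however, a genuine gap at the one step that actually proves the formula. The paper's proof asserts that for a fixed base $S$ the total contribution of the pairs built from $S$ and all $\Sigma$ is
$q^{2\sum i s_i}\,t^{2\sum(i-1)s_i}\,\prod_{i=1}^{n}(1+a^2q^{2i-2}t^{2i-1})$,
which amounts to evaluating, from Theorem~\ref{thm:dimensions} (equivalently Theorems~\ref{thm:unnesteddims} and~\ref{thm:nesteddims}), the generators $\gamma_i$, the syzygy degrees, and the three counting sums for these stable semigroup modules, and verifying that $N(\jj)=\sum(i-1)s_i$ and that $m^2+2N(\ii\supset\jj)-2N(\jj)=\sum_{i\in\Sigma}(2i-1)$ while the colength of $\jj$ grows by $i-1$ for each $i\in\Sigma$. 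You state the unnested case ($N(\jj)=|\mu|-\mu_1$) without derivation and explicitly defer the nested case (``I expect the main obstacle to be precisely this bookkeeping\dots one must\dots check that $N(\mathfrak{i}_T\supseteq\mathfrak{j})$\dots reassembles slot by slot''). That deferred check is not routine packaging around a finished argument; it is the content of the proposition, since it is the only place the exponents $2i-2$ and $2i-1$ in the numerator factors can come from. Everything before it is correct scaffolding, and your $a=0$ sanity check is valid, but as written the proof stops exactly where the paper's calculation of $\mathrm{Cont}_S$ begins.
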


This computation matches the formula for the ``stable superpolynomial'' of torus knots conjectured in \cite{DGR}.

To understand \(\usppa(X_{k,n})\) in general, it is profitable to consider the reformulation in terms of the compactified Jacobian. 
For the unibranch singularities $x^k = y^n$, the \(K\)-theory
of the compactified Jacobian is known \cite{VV} to furnish
a representation of the spherical 
Cherednik algebra of rank $n$ and central charge $k/n$.  
It will be shown elsewhere that the 
{\it rational} spherical Cherednik algebra acts on its cohomology \cite{OY}. 
Moreover, the homological grading (\(t\))  and the perverse 
filtration (\(q\)) have representation theoretic meanings. 

In case $k = mn + 1$ the graded dimensions of conjecturally equivalent filtrations
can be readily calculated in {a different} geometric incarnation of
the Cherednik algebra \cite{haiman2, GS,GS2}.  We obtain a formula
expressing $\sppa^{\mathrm{min}}(X_{mn+1,n})$ as a sum over partitions of $n$.
For a partition
$\lambda \vdash n$, and a box $x$ in the diagram of $\lambda$, 
we write $a(x), l(x)$ for the arm and leg, and $a'(x), l'(x)$ for the co-arm and 
co-leg.  We write $\lambda'$ for the dual partition, and 
$\kappa(\lambda)=\sum {\lambda'_i\choose 2}$.   We have 
the following formula: 

\begin{conjecture} \label{conj:mnp1min} 
Let $T_1 = q^2$ and $T^2 = 1/q^2 t^2$.  Then $ t^{-\mu} \sppa^{\mathrm{min}}(X_{mn+1,n})  $
is given by the following formula:
$$
\sum_{\lambda \vdash n} T_1^{m\kappa(\lambda)} T_2^{m\kappa(\lambda')} 
\frac{(1-T_1)(1-T_2) \prod_{x\in\lambda\setminus \{(0,0)\}}
  (1-T_1^{l'(x)}T_2^{a'(x)})}{\prod_{x\in\lambda }
  (1-T_1^{1+l(x)}T_2^{-a(x)})(1-T_1^{-l(x)}T_2^{1+a(x)})} \sum_{x \in \lambda} T_1^{l'(x)} T_2^{a'(x)}.
$$
\end{conjecture}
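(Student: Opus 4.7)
The plan is to reduce Conjecture~\ref{conj:mnp1min} to an equivariant localization computation on $\mathrm{Hilb}^n(\CC^2)$, bridging from the compactified Jacobian side through the representation-theoretic description of $\sppa^{\mathrm{min}}$. First, Proposition~\ref{prop:perverse} lets one rewrite $\sppa^{\mathrm{min}}(X_{mn+1,n}) = \sum_i q^{2i} \mathfrak{w}({}^p \mathrm{H}^i(\overline{J}^0(C)))$, where $C$ is any rational curve whose unique singularity is of type $X_{mn+1,n}$. Invoking the rational spherical Cherednik algebra action on $\mathrm{H}^*(\overline{J}^0(C))$ announced in \cite{OY} at central character $k/n = (mn+1)/n$, and using $\gcd(mn+1,n)=1$, one identifies this cohomology, as a bigraded vector space, with the unique finite-dimensional simple module $L_{(mn+1)/n}$. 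The homological ($t$) grading matches the Cherednik weight filtration, while the perverse ($q$) filtration matches, \emph{conjecturally}, the second natural filtration on $L_{(mn+1)/n}$ -- precisely the identification flagged in the excerpt.

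Next, transport the computation to the Hilbert scheme via Gordon--Stafford \cite{GS,GS2}. Their Beilinson--Bernstein style equivalence relates filtered modules for the spherical Cherednik algebra at $k/n$ to equivariant modules over an algebra of twisted differential operators on $\mathrm{Hilb}^n(\CC^2)$, under which the algebraic filtration on the Cherednik side matches the filtration by powers of the tautological line bundle $\oO(1)$. Combined with Haiman's Procesi bundle and $n!$ theorem \cite{haiman2}, this realizes $L_{(mn+1)/n}$ as a concrete bigraded vector space on $\mathrm{Hilb}^n(\CC^2)$, with the bigrading coming from the weights of the torus $(\CC^*)^2$ scaling $\CC^2$. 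Identify these weights with $T_1 = q^2$ and $T_2 = 1/(q^2 t^2)$ to match the statement.

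Now apply Atiyah--Bott equivariant localization on $\mathrm{Hilb}^n(\CC^2)$. The fixed points are the monomial ideals $I_\lambda$ for $\lambda \vdash n$; the tangent character at $I_\lambda$ is the classical arm/leg product $\prod_{x \in \lambda}(1 - T_1^{1+l(x)} T_2^{-a(x)})(1 - T_1^{-l(x)} T_2^{1+a(x)})$, giving the denominator; the character of $\oO(1)^{\otimes m}$ at $I_\lambda$ is $T_1^{m\kappa(\lambda)} T_2^{m\kappa(\lambda')}$; the character of $\CC[x,y]/I_\lambda$ is $\sum_{x\in\lambda} T_1^{l'(x)} T_2^{a'(x)}$; and the factor $(1-T_1)(1-T_2)\prod_{(0,0) \neq x \in \lambda}(1 - T_1^{l'(x)} T_2^{a'(x)})$ records the passage from the polynomial (standard) module to the finite-dimensional simple quotient at the given fixed point. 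Summing the contributions by Atiyah--Bott produces exactly the asserted sum over partitions.

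The main obstacle is not the localization computation itself, which is essentially mechanical given the setup, but the conjectural comparison of filtrations. The perverse filtration on $\mathrm{H}^*(\overline{J}^0(C))$ is defined through the decomposition theorem applied to the smoothing family of \cite{FGvS} and is transcendental, built from Hodge-theoretic splittings. The filtration inherent in the Gordon--Stafford realization of $L_{(mn+1)/n}$ is instead algebraic, generated on the Hilbert-scheme side by $\oO(1)$. No direct match between these filtrations is currently available, and a full proof of Conjecture~\ref{conj:mnp1min} therefore reduces to supplying one -- for instance by constructing a geometric interpolation via compactified Jacobians of spectral covers in $T^*\CC$, or by upgrading the Maulik--Yun / Migliorini--Shende perverse description of $\mathrm{H}^*(\overline{J})$ to intertwine the full Cherednik action with the $\oO(1)$-filtration. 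Once this identification is in place, the three steps above deliver the formula.
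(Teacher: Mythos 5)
Your derivation follows the paper's own route essentially verbatim: Proposition~\ref{prop:perverse} to pass to the perverse filtration on $\mathrm{H}^*(\overline{J}^0(C))$, the \cite{OY} identification with $eL_{(mn+1)/n}$, the Gordon--Stafford functor $\Phi$ with $\Phi(eL_{m+\frac{1}{n}}) = \oO_\zZ \otimes (\det \tT)^{\otimes m}$, and Haiman-style localization on $(\CC^2)^{[n]}$ yielding formula~\eqref{eq:nnmp1}. You also correctly isolate the one genuinely conjectural step, namely the comparison of the perverse filtration with the good filtration of Proposition~\ref{prop:CEE} (Conjecture~\ref{conj:filtrations}), which is exactly why the statement remains a conjecture in the paper.
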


In a subsequent article
\cite{GORS},
we suggest how all of $\uspp(X_{k,n})$ (rather than
just $\uspp^{\min{}}$) may be recovered from the analogous
representation of the rational Cherednik algebra  (rather than just
the spherical part).  This leads to the following conjecture:

\begin{conjecture}\label{conj:mnp1} 
  Let $T_1 = q^2$, $T^2 = 1/q^2 t^2$, and $A = a^2 t$.  Then $(at)^{-\mu} \sppa(X_{mn+1,n})$
  is given by the following formula: 
$$ 
\sum_{\lambda \vdash n} T_1^{m\kappa(\lambda)} T_2^{m\kappa(\lambda')} 
\frac{(1-T_1)(1-T_2) \prod_{x\in\lambda\setminus \{(0,0)\}}
  (1-T_1^{l'(x)}T_2^{a'(x)})(1+A T_1^{-l'(x)}T_2^{-a'(x)})}
  {\prod_{x\in\lambda }
  (1-T_1^{1+l(x)}T_2^{-a(x)})(1-T_1^{-l(x)}T_2^{1+a(x)})} \sum_{x \in \lambda} T_1^{l'(x)} T_2^{a'(x)}.$$
\end{conjecture}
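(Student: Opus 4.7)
The plan is to establish Conjecture \ref{conj:mnp1} by extending the representation-theoretic framework behind Conjecture \ref{conj:mnp1min} from the spherical to the full rational Cherednik algebra, as foreshadowed in \cite{GORS}, and then evaluating the resulting triply graded character by equivariant localization on \(\Hilb^n(\CC^2)\).

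As a first step I would upgrade Proposition \ref{prop:perverse} to account for the full series \(\sppa\), not just \(\sppa^{\min}\). The rewriting
\[
\sum_{l,m} q^{2l} a^{2m} t^{m^2} \wtp(X^{[l\le l+m]}) = \sum_{l,r} q^{2l} \wtp(X^{[l]}_r) \prod_{k=1}^{r}(1 + t^{2k-1} a^2)
\]
shows that the \(a\)-dependence is fermionic and is controlled by the stratification \(X^{[l]} = \bigsqcup_r X^{[l]}_r\) by minimal number of generators. Under the conjectural identification of \cite{GORS}, passing from the spherical part \(e L_{k/n}\) to the full irreducible module \(L_{k/n}\) of the rational Cherednik algebra should introduce precisely such an exterior algebra factor in the variable \(A = a^2 t\), so that \((at)^{-\mu}\sppa(X_{k,n})\) becomes a trace on \(L_{k/n}\) graded by weight, perverse level, and generator count.

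For \(k = mn + 1\), I would then invoke the geometric realization of \(L_{(mn+1)/n}\) due to Gordon and Stafford \cite{GS, GS2}, which identifies it with a distinguished module built from the Procesi bundle over \(\Hilb^n(\CC^2)\). The resulting character can be computed by \(T^2\)-equivariant localization: the torus fixed points are indexed by \(\lambda \vdash n\); the Atiyah--Bott weights produce the denominator \(\prod_{x \in \lambda}(1 - T_1^{1+l(x)} T_2^{-a(x)})(1 - T_1^{-l(x)} T_2^{1+a(x)})\); the rank \(n\) tautological bundle contributes the sum \(\sum_{x\in\lambda} T_1^{l'(x)} T_2^{a'(x)}\) and, via its exterior powers, the factors \((1 + A T_1^{-l'(x)} T_2^{-a'(x)})\); and twisting by \(\oO(m)\) on the Procesi side accounts for the prefactors \(T_1^{m\kappa(\lambda)} T_2^{m\kappa(\lambda')}\). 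The remaining factors \((1 - T_1^{l'(x)} T_2^{a'(x)})\) together with the overall \((1-T_1)(1-T_2)\) should emerge from passing between equivariant Euler characteristics and cohomological graded dimensions in the style of \cite{haiman2}.

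The main obstacle is the first step: establishing a full dictionary between the geometrically defined triple grading on \(\sppa(X_{k,n})\) and the corresponding representation-theoretic grading on \(L_{k/n}\). Even in the spherical case this identification rests on the still-open matching of the perverse filtration on \(\mathrm{H}^*(\overline{J}(X_{k,n}))\) with the filtration used by Gordon and Stafford; extending it to the non-spherical setting requires, in addition, a geometric incarnation of the generator-count stratification that is compatible with both the weight and perverse gradings. Once this dictionary is in hand, the closed form in the conjecture should follow from a direct manipulation of the localization formula on \(\Hilb^n(\CC^2)\), parallel to the evaluation envisioned for Conjecture \ref{conj:mnp1min}.
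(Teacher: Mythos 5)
Your proposal follows essentially the same route as the paper: the statement is offered there only as a conjecture, obtained by combining the Gordon--Stafford/Haiman identification $F_{(mn+1)/n} = \oO_\zZ \otimes (\det \tT)^{\otimes m}$ and its equivariant localization (Theorem \ref{thm:GandS} and the weights \eqref{eq:weights}) with the extra tensor factor $\Lambda^* A \tT'^*$ of Conjecture \ref{conj:allrows}, which encodes the higher powers of $a$ via the passage from the spherical module $eL_{k/n}$ to the full Cherednik algebra module --- exactly the exterior-algebra mechanism you describe. You also correctly flag the same open points the paper does: the identification of the perverse filtration with the Gordon--Stafford filtration (Conjecture \ref{conj:filtrations}) and the geometric/representation-theoretic meaning of the generator-count stratification, both of which are deferred to \cite{GORS}.
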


Formulas of this sort were first conjectured by Gorsky \cite{g} in the case of \(T(n,n+1)\), and subsequently in the physics literature by Dunin-Barkovsky  {\it et. al.} \cite{mm},
and Aganagic and Shakirov \cite{as} for \(T(n,nm+1)\).

\subsection*{Grading conventions for \(\overline{\mathscr{H}}\)}
Our normalization of the HOMFLY homology is the one used in \cite{DGR}, rather than that of \cite{KR} or \cite{R}. Specifically, our main interest is in the group \(\overline{\mathscr{H}}(K)\), which in the terminology of \cite{R} is the {\it  unreduced} HOMFLY homology. It satisfies 
 $$\overline{\mathscr{H}}(K) \cong \mathscr{H}(K) \otimes 
\mathrm{H}^*(S^1) \otimes \QQ[X]$$
 where the group 
 \( \mathscr{H}(K)\) is the {\it reduced} HOMFLY homology. 
 
 We normalize the homological (\(t\)) grading on \( \mathscr{H}(K)\) so as to coincide with the homological grading on reduced Khovanov homology under the spectral sequence of \cite{R}; for example, the Poincar{\'e} polynomial of  \( \mathscr{H}\) of the positive trefoil is given by
 $$a^2q^{-2}t^0+a^4q^0t^3 + a^2q^2t^2.$$
 
 The Poincar{\'e} polynomials of the reduced and unreduced homologies are related by:
 $$\overline{\mathscr{P}} (K) =\frac{at+a^{-1}}{q-q^{-1}} \mathscr{P}(K).$$
 
 The Poincar{\'e} polynomial with respect to the homological grading (\(s\)) of \cite{KR} may be obtained by substituting  \(t=s^{-1}, a^2 =a^2s\) in \(\overline{\mathscr{P}}\). This reflects the fact that the 
 homological grading on \(sl(n)\) homology is obtained as a linear combination of the \(a\) and \(s\) gradings \cite{R}, and the fact that  \(sl(2)\) homology is dual to Khovanov homology. 
 
\vspace{2mm} \noindent {\bf Acknowledgements.}  
We thank Emanuel Diaconescu, Pavel Etingof, Lothar G\"ottsche, Ian Grojnowski, Mikhail Khovanov, Ivan Losev, Luca
Migliorini, Rahul Pandharipande, Ivan Smith, Cumrun Vafa, Ben Webster, Geordie Williamson, 
and Zhiwei Yun for helpful
discussions. A. Oblomkov is supported by Sloan Foundation and NSF.
J. Rasmussen would like to thank the Simons Center for Geometry and Physics for its hospitality and support while much of this work was done. 
V. Shende is supported by 
the Simons Foundation.

\section{Hilbert Schemes and Jacobians} \label{sec:BPS}

In this section, we develop some general facts about the series \(\usppa\):
its rationality and symmetry properties, behavior under blowups, and  relation to the cohomology of the compactified Jacobian. For the most part, these are straightforward consequences of previous work.

\subsection{Rationality and Symmetry}
In this section, we prove  Proposition~\ref{prop:rands}. We begin by reviewing
\cite[Section 4]{OS} and \cite[Appendix B]{PT3}, presenting them
now in the Grothendieck ring of varieties.  This is the
ring generated by the classes $[V]$ of varieties $V$, the sum
and product coming from disjoint union and direct product
respectively.  The classes are subject
to the relation
$[V \setminus Z] = [V] - [Z]$ for $Z$ is a closed subvariety of $V$.  
If $\phi:V \to A$ is a constructible function, we write 
$[V,\phi]: = \sum_{a \in A} a\cdot [\phi^{-1}(a)]$.  

It is convenient to rewrite slightly the 
series $\uspp_{\mathrm{alg}}$. 
Consider the locus $X^{[l]}_r \subset X^{[l]}$
parameterizing subschemes whose ideals require $r$ generators.  By
Nakayama's lemma, the projection $X^{[l\le l+m]} \to X^{[l]}$
has fibres $\mathrm{Grass}(\CC^r \subset \CC^m)$ over $X^{[l]}_r$.
Thus we have
\[\sum_{l,m} q^{2l} a^{2m} t^{m^2} [X^{[l\le l+m]}] = 
\sum_{l,r} q^{2l} [X^{[l]}_r] \sum_m a^{2m} t^{m^2} [G(r,m)] = 
\sum_{l} q^{2l} [X^{[l]}, \Phi] \]
where the constructible function 
$\Phi$ on $X^{[l]}$ takes the value 
$\sum_m a^{2m} t^{m^2} [G(r,m)]$ at a point corresponding
to an ideal requiring $m$ generators.  

We pass to a complete curve $C$ with a unique singularity at $p$, at
which its germ is $X$.
There is a stratified
map $C^{[l]} \to \coprod_{l' \le l} X^{[l']}$ which forgets
points away from the singularity; we extend $\Phi$ to $C^{[l]}$ by pullback
along this map.  Then by a standard stratification argument we have
\[  \sum_{l} q^{2l} [X^{[l]}, \Phi] = 
\frac{\sum_{l} q^{2l} [C^{[l]}, \Phi]}
{(1-q^2)^b \sum_{l} q^{2l} [\widetilde{C}^{[l]}]} \] 
where $\widetilde{C}$ is the normalization of $C$.

Essentially by definition, $\Phi$ depends only on the
isomorphism class as a sheaf of the ideal sheaf of the subscheme, and 
does not change when this sheaf is tensored by a line bundle.  Moreover,
it is shown in \cite{OS} (using the planarity of the singularities
and applying the Auslander-Buchsbaum theorem) that $\Phi$ of a sheaf
and its dual agree.  In other words, it satisfies the hypothesis
of the following lemma, whose statement and proof are modeled on Lemma 3.13 of \cite{PT3}.

\begin{lemma}  \label{lem:bps}
Let $C$ be a Gorenstein curve of arithmetic genus $g$.
Let $\phi$ be a constructible function on the moduli space
$\overline{J}(C)$ of rank one, torsion free sheaves.
Assume that for any rank one torsion free sheaf $\fF$ and
any line bundle $L$, we have $\phi(\fF^*) = \phi(\fF) = \phi(\fF \otimes L)$. 
Denote also by $\phi$ the function induced on $C^{[n]}$ by composition
with the Abel-Jacobi map. 

Then there exist classes $N_h(C,\phi)$ in the Grothendieck group of varieties
(with coefficients in the ring in which $\phi$ takes values) 
such that
\[(1-q)(1-q\LL) \sum_{n=0}^\infty q^n [C^{[n]}, \phi] = 
\sum_{h=0}^g N_h(C,\phi) \cdot (1-q)^{h} (1-q\LL)^{h} q^{g-h},\]
where \(\LL = [\AA^1]\) is the class of the affine line. 
In particular, denoting by $Z_C(q)$ the quantity on either side of the above
formula, we have $Z_C(q) = (q^2 \LL )^g Z_C(1/q\LL)$. 
\end{lemma}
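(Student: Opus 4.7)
The plan is to follow \cite[Lemma 3.13]{PT3}, reducing the generating function $\sum_n q^n [C^{[n]}, \phi]$ to data on the compactified Jacobian via the Abel--Jacobi map $\pi_n : C^{[n]} \to \overline{J}^n(C)$, $Z \mapsto I_Z^*$.

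The key setup is to stratify $\overline{J}^n(C) = \coprod_h \overline{J}^n_h$ by $h := h^1(\fF)$. The fiber of $\pi_n$ over $\fF$ is $\PP H^0(\fF)$, and Riemann--Roch combined with Serre duality on the Gorenstein curve $C$ gives $h^0(\fF) = n+1-g+h$ on $\overline{J}^n_h$. This yields
$$[C^{[n]}, \phi] = \sum_{h=0}^g [\PP^{n-g+h}] \, \Phi_h(n), \qquad \Phi_h(n) := [\overline{J}^n_h, \phi].$$
The two invariance hypotheses on $\phi$ now enter. First, $\phi(\fF \otimes L) = \phi(\fF)$ gives $\sum_h \Phi_h(n) = [\overline{J}^n, \phi] = J$, independent of $n$. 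Second, the Serre-duality involution $\sigma(\fF) := \fF^* \otimes \omega_C$ identifies $\overline{J}^n_h$ with $\overline{J}^{2g-2-n}_{n+1-g+h}$ and preserves $\phi$ (using both hypotheses simultaneously), giving the identity $\Phi_h(n) = \Phi_{n+1-g+h}(2g-2-n)$.

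From here, I would first observe that $\Phi_h(n) = 0$ for $h \geq 1$ and $n \geq 2g-1$, so $[C^{[n]}, \phi] = J[\PP^{n-g}]$ for such $n$. Combined with the elementary generating-series identity $(1-q)(1-q\LL)\sum_{n\geq 0} q^n [\PP^{n-g}] = q^g$, this rewrites
$$Z_C(q) = Jq^g + (1-q)(1-q\LL) \sum_{n=0}^{2g-2} q^n \epsilon_n, \qquad \epsilon_n := [C^{[n]}, \phi] - J[\PP^{n-g}],$$
exhibiting $Z_C(q)$ as a polynomial in $q$ of degree $\leq 2g$. The main step is then to verify the symmetry $Z_C(q) = (q^2\LL)^g Z_C(1/q\LL)$, which reduces to the pointwise identity $\epsilon_m = \LL^{m-g+1}\epsilon_{2g-2-m}$. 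I would check this by substituting the stratification expression into both sides, reindexing via the Serre-duality formula for $\Phi_h$, and applying the telescoping identity $[\PP^{n-g+h}] - [\PP^{n-g}] = \LL^{n-g+1}[\PP^{h-1}]$. The delicate point will be bookkeeping of summation ranges and the status of the correction $J[\PP^{m-g}]$, which vanishes precisely when $m < g$; the two regimes $m \geq g$ and $m < g$ are exchanged by the Serre involution and should match up exactly after reindexing.

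Once $Z_C(q)$ is known to be a polynomial of degree $\leq 2g$ with the required symmetry, the claimed expansion follows from linear algebra: the $g+1$ elements $(1-q)^h(1-q\LL)^h q^{g-h}$ for $h = 0,\ldots,g$ each individually satisfy the symmetry, and their transition matrix to the monomials $\{q^{g-h}\}$ is triangular with $1$s on the diagonal (reading off lowest-degree terms). Hence they form a basis of the $(g+1)$-dimensional symmetric subspace, and the classes $N_h(C,\phi)$ are uniquely determined in the Grothendieck ring.
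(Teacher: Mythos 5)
Your proof is correct, but it organizes the computation differently from the paper. The paper stratifies the degree-zero compactified Jacobian by the \emph{entire} Hilbert function $d\mapsto h^0(\fF\otimes\oO(d))$, writes the generating series as $\sum_h[\{\fF\,:\,h_\fF=h\}]\,Z_h(q)$ with $Z_h(q)=(1-q)(1-q\LL)\sum_d q^d[\PP^{h(d)-1}]$, computes each $Z_h$ explicitly in terms of the jump sets of $h$, and proves the functional equation stratum-by-stratum via $Z_{h^\vee}(q)=q^{2g}\LL^g Z_h(1/q\LL)$, where $h^\vee$ is the Hilbert function of $\fF^*\otimes\omega_C\otimes\oO(2-2g)$. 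You instead stratify each $\overline{J}^n$ separately by $h^1$ and prove the functional equation coefficient-by-coefficient in $q$, via $\epsilon_m=\LL^{m-g+1}\epsilon_{2g-2-m}$. Your deferred verification does go through: writing $\epsilon_m=\sum_h([\PP^{m-g+h}]-[\PP^{m-g}])\Phi_h(m)$ (using $\sum_h\Phi_h(m)=J$), the Serre reindexing sends $h\mapsto h'=m+1-g+h$, turns $[\PP^{m'-g+h'}]$ into $[\PP^{h-1}]$ and $[\PP^{m'-g}]$ into $[\PP^{g-2-m}]$, and the required termwise identity $[\PP^{m-g+h}]-[\PP^{m-g}]=\LL^{m-g+1}([\PP^{h-1}]-[\PP^{g-2-m}])$ holds for every $h$ with $\overline{J}^m_h\neq\emptyset$ (i.e.\ $h\geq\max(0,g-1-m)$, since $h^0\geq 0$), in both regimes $m\geq g-1$ and $m\leq g-2$. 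What each approach buys: the paper's per-stratum $Z_h$ makes polynomiality and the symmetry manifest for each sheaf individually and hews closely to the BPS formalism of \cite{PT3}; yours isolates the ``stable'' contribution $Jq^g$, reduces everything to a single explicit coefficient identity, and spells out the triangular-basis argument for the existence of the $N_h$, which the paper leaves implicit.
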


\begin{proof}
  The following useful properties
  of Gorenstein curves may be found in 
  an article of Hartshorne \cite{H}.
  Let $C$ be a Gorenstein curve, and let $F$ 
  be a torsion free sheaf on $C$. 
  Write $F^*$ for $\hH om (F,\oO_C)$. 
  Then $\mathcal{E}xt^{\ge 1}(F,\oO_C) = 0$ and 
  $F = (F^*)^*$.  Serre duality
  holds in the form $\mathrm{H}^i(F) = \mathrm{H}^{1-i}(F^* \otimes \omega_C)^*$.
  For $F$ rank one and torsion free, define its 
  degree $d(F):=\chi(F) - \chi(\oO_C)$.   This satisfies 
  $d(F) = - d(F^*)$, and, for $L$ any line bundle, 
  $d(F \otimes L) = d(F) + d(L)$. 

  We proceed with the proof of the lemma.  Fix a degree 1 line bundle 
  $\oO(1)$ on $C$.  We map $C^{[d]} \to \overline{J}^0(C)$
  by associating the ideal $I \subset \oO_C$ to the
  sheaf $I^* = \hH om(I,\oO_C) \otimes \oO(-d)$; the fibre is 
  $\PP(\mathrm{H}^0(C,I^*))$.  For $\fF$ a rank one degree zero torsion
  free sheaf, we write the Hilbert function as 
  $h_\fF(d) = \dim \mathrm{H}^0(C,\fF \otimes \oO(d))$.  Then since over
  the strata with constant Hilbert function, the map from the Hilbert schemes
  to the compactified Jacobian is the projectivization of a vector bundle, 
  we have the equality
  \[\sum_{d=0}^\infty q^d [C^{[d]},\phi]  = \sum_{a \in A} \sum_h a  
  [\{\fF\,|\,h_{\fF} = h, \, 
  \phi(\fF) = a\}] \sum_{d=0}^{\infty} q^d 
  [\PP^{h(d)-1}].\]
  
  Fix $h = h_\fF$ for some $\fF$.  Evidently $h$ 
  is supported in $[0,\infty)$, and by Riemann-Roch and
  Serre duality is equal to
  $d+1-g$ in $(2g-2,\infty)$.  Inside $[0,2g-2]$, it
   increases by either 0 or 1 at each step.  Let $\phi_\pm(h)
  = \{ d\,|\, 2 h(d-1) - h(d-2) - h(d) = \pm 1\}$; evidently
  $\phi_- \subset [0,2g]$ and $\phi_+ \subset [1,2g-1]$, and
  \[Z_h(q):=(1-q) (1-q\LL) \sum_{d=0}^\infty q^d [\PP^{h(d)-1}] = 
  \sum_{d \in \phi_-(\fF)} q^d \LL^{h(d)-1} -
  \sum_{d \in \phi_+(\fF)} 
  q^{d} \LL^{h(d-1)} \]
  This is a polynomial in $q$ of degree at most $2g$, hence so is $Z_C(q)$. 
  
  Let $\gG = \fF^* \otimes \omega_C \otimes \oO(2-2g)$, and $h^\vee = h_\gG$. 
  By Serre duality and Riemann-Roch, $h^\vee(d) = h(2g-2-d) + d + 1 - g$, so
  in particular, $d \in \phi_{\pm}(h^\vee) \iff 2g-d \in \phi_{\pm}(h)$.  It
  follows that $q^{2g} \LL^g  Z_h(1/q\LL) = Z_{h^{\vee}}(q)$.  As 
  $Z_C(q) = \sum_h  [\{\fF\,|\,h_{\fF} = h\}] Z_h(q)$, we obtain the 
  final stated equality. 
\end{proof}

Take the curve $C$ to be rational, recall that  
$\mu = 2\delta + 1 - b$ \cite{Mil}, and note that in Lemma \ref{lem:bps}
we used the variable $q$ rather than $q^2$.  Then we see that
there exist classes  $N_h$ in the Grothendieck ring of varieties
(with coefficients in $\ZZ[a,t]$) such that 
\[ \uspp_{\mathrm{alg}} = \mathfrak{w} \left( a^{2\delta - b} 
(q^{-1}-q)^{-b} \sum_{h=0}^\delta N_h(C,\phi) 
\cdot (q^{-1}-q)^{h} (q^{-1}-q\LL)^{h} \right) \]
From this expression, we see immediately that 
$(q^{-1} -q)^b \uspp_{\mathrm{alg}}$ is a Laurent polynomial 
in $q$ with coefficients between $q^{-2\delta}$ and $q^{2 \delta}$ which
is invariant under $q \to 1/qt$.  Finally note the degree of
$N_h$ in $a^2$ is bounded by the multiplicity of the singularity, 
as this is the maximal number of generators which any ideal 
will require  \cite[Exercise 4.6.16]{BH}.  This completes
the proof of Proposition \ref{prop:rands}. \qed
\vskip2mm
\noindent{\it Remark:} 
When \(b=1\), the link of \(X\) is a knot, and the product \((q-q^{-1})\usppa(X)\) corresponds under Conjecture~\ref{conj:homflyhomology} to \(a^{-1}+at\) times the Poincar{\'e} polynomial of \(\mathscr{H}(K)\). In particular, all of its terms are positive. For \(b>1\), the quantity 
\((q-q^{-1})^b\usppa(X)\) is somewhat less natural from the point of view of the HOMFLY homology. For a two-component link,  \((q-q^{-1})^2\usppa(X)\) will typically have negative terms, so it cannot coincide with the Poincar{\'e} polynomial of the completely reduced homology considered in \cite{R}. 
\subsection{Blowups}
If \(X\) is the germ of a plane curve singularity, we can blow up \(X\) at the central point to obtain  the germ of a new singularity \(\widetilde{X}\). The effect of this operation on the link of the singularity is well-known. If \(m\) is the multiplicity of \(X\), we can write the link of \(X\) as the closure of a \(m\)-strand braid \(\sigma\). Then the link of \(\widetilde{X}\) is the closure of the braid \(\sigma \Delta^{-2}\), where \(\Delta^{-2}\) denotes a full left-handed twist on \(m\) strands. 

Now let \(\sigma\) be an arbitrary braid with closure \(\overline{\sigma}\), and let \(n\) and \(N\) be the minimum and maximum powers of \(a\) appearing in \(\overline{\Pp}(\overline{\sigma})\). The Morton-Franks-Williams inequality says that 
$  w-m \leq n \leq N \leq w+m $, where \(w\) and \(m\) denote the writhe and number of braid strands in \(m\) respectively. Let \(\overline{\Pp}^{\rm{min}}\) and \( \overline{\Pp}^{\rm{max}}\) be the coefficients of \(a^{w-m}\) and \(a^{w+m}\) in \(\overline{\Pp}(\overline{\sigma})\). Then according to a theorem of  Kalman \cite{Kalman},
$\overline{\Pp}^{\rm{min}}(\overline{\sigma\Delta^{-2}}) = \overline{\Pp}^{\rm{max}}(\overline{\sigma}).$
For algebraic knots, an analogous statement holds at the level of \(\usppa\):
\begin{proposition}
If \(X\) is the germ of a unibranch plane curve singularity with multiplicity \(m\), then
\(\usppa^{\rm{max}}(X) = t^{m^2} \usppa^{\rm{min}}(\widetilde{X}) \).
\end{proposition}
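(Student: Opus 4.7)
The plan is to unpack $\usppa^{\max}(X)$ and $\usppa^{\min}(\widetilde X)$ from the Grassmannian rewriting
\[
\usppa(X) = (a/q)^{\mu-1}\sum_{l,r} q^{2l}\,\wtp(X^{[l]}_r)\prod_{k=1}^r(1+t^{2k-1}a^2)
\]
recalled at the start of Section~\ref{sec:BPS}, and then to reduce the proposition to a geometric bijection between the maximal-generator stratum on $X$ and the full Hilbert scheme of the blowup $\widetilde X$. Since the minimal number of generators of any $\mathfrak m_X$-primary ideal is bounded by the multiplicity $m$, the highest power of $a$ appearing in $\usppa(X)$ is $a^{\mu-1+2m}$ and its coefficient is
\[
\usppa^{\max}(X) = t^{m^2}\,q^{1-\mu}\sum_l q^{2l}\,\wtp(X^{[l]}_m),
\]
the $t^{m^2}$ being the leading coefficient of $\prod_{k=1}^m(1+t^{2k-1}a^2)$. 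Dually, the lowest power of $a$ in $\usppa(\widetilde X)$ is $a^{\tilde\mu-1}$ with coefficient $q^{1-\tilde\mu}\sum_{l'}q^{2l'}\wtp(\widetilde X^{[l']})$, obtained by taking $a^0$ in each Grassmannian factor.

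Using Noether's formula $\delta-\tilde\delta = m(m-1)/2$ for a single blowup of a unibranch plane singularity (equivalently $\mu-\tilde\mu = m(m-1)$ via $\mu = 2\delta$), the proposition reduces after cancelling $t^{m^2}$ to the identity
\[
\sum_l q^{2l}\,\wtp(X^{[l]}_m) \;=\; q^{m(m-1)}\sum_{l'}q^{2l'}\,\wtp(\widetilde X^{[l']}),
\]
and I would deduce it by exhibiting, for every $l'\ge 0$, a scheme isomorphism $\varphi\colon \widetilde X^{[l']}\xrightarrow{\sim}X^{[l'+\delta-\tilde\delta]}_m$.

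The construction of $\varphi$ is as follows. Let $\mathfrak c = (\mathcal O_X : \mathcal O_{\widetilde X})$ be the conductor of $\mathcal O_{\widetilde X}$ into $\mathcal O_X$, viewed at once as an ideal of $\mathcal O_X$ and of $\mathcal O_{\widetilde X}$; for a planar unibranch singularity, $\mathcal O_{\widetilde X}$ is Gorenstein and $\mathfrak c$ is a principal, hence invertible, ideal of it. For an ideal $J\subset \mathcal O_{\widetilde X}$ of colength $l'$, set $\varphi(J) := J\cdot\mathfrak c\subset\mathcal O_X$; this is automatically closed under the $\mathcal O_{\widetilde X}$-action. The short exact sequence
\[
0\to\mathfrak c/J\mathfrak c\to\mathcal O_X/J\mathfrak c\to\mathcal O_X/\mathfrak c\to 0,
\]
combined with the invertibility-driven identification $\mathfrak c/J\mathfrak c\cong\mathcal O_{\widetilde X}/J$ and the standard length equality $\ell(\mathcal O_{\widetilde X}/\mathcal O_X)=\delta-\tilde\delta$, gives $\ell(\mathcal O_X/\varphi(J)) = l'+(\delta-\tilde\delta)$, the required shift in colength. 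The candidate inverse is $I\mapsto I:\mathfrak c$, and both constructions are functorial in flat families, so they promote immediately to morphisms of Hilbert schemes.

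The hard part will be to verify that the image of $\varphi$ is exactly the stratum $X^{[\cdot]}_m$ of maximally-generated ideals. The content is the characterisation: an $\mathfrak m_X$-primary ideal $I$ of $\mathcal O_X$ satisfies $\mu_{\mathcal O_X}(I)=m$ iff $\mathcal O_{\widetilde X}\cdot I = I$, in which case the inverse map $I\mapsto I:\mathfrak c$ recovers the unique $J\subset\mathcal O_{\widetilde X}$ with $J\mathfrak c = I$. This is classical from Lipman's theory of stable ideals in one-dimensional Cohen--Macaulay local rings, and in the planar setting admits a direct combinatorial verification via the Piontkowski-type stratification: the map $\widetilde{\mathfrak i}\mapsto \widetilde{\mathfrak i}+c_0$ for the minimal $c_0$ with $\widetilde\Gamma + c_0\subset \Gamma$ bijects semigroup ideals of $\widetilde\Gamma$ onto those of $\Gamma$ with exactly $m$ generators, preserves cell dimensions, and shifts codimension by $\delta-\tilde\delta$. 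With the characterisation in hand, the two weight-polynomial series match term by term and the proposition follows.
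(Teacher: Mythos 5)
Your proposal is correct and is essentially the paper's own argument: since $\mathrm{Grass}(\CC^m\subset\CC^m)$ is a point, your stratum $X^{[l]}_m$ is canonically the nested space $X^{[l\le l+m]}$ that the paper uses, and your map $J\mapsto J\cdot\mathfrak{c}$ is literally the paper's $J\mapsto x^{m-1}J$ (in Weierstrass coordinates the conductor is $\mathfrak{m}^{m-1}=x^{m-1}\oO_{\widetilde{X}}$). The surjectivity step you defer to Lipman's theory of stable ideals is settled in the paper by exactly the observation you would need — $\mu_{\oO_X}(I)=m$ forces $\mathfrak{m}I=xI$ because $I$ is free of rank $m$ over $\CC[[x]]$, hence $(y/x)I\subset I$ and $I$ is an $\oO_{\widetilde{X}}$-module contained in the conductor — so there is no genuine gap, only a citation where the paper gives a two-line computation.
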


\begin{proof}
It is a classical result that \(\mu(X) = \mu(\widetilde{X})+m(m-1)\). 
Thus the proposition  is equivalent to saying that 
$$\sum_{l=0}^\infty q^{2l} \mathfrak{w}(X^{[l\leq l+m]}) = 
q^{m(m-1)} \sum_{l=0}^\infty q^{2l}\mathfrak{w}(\widetilde{X}^{[l]}).$$
We will show that  \(X^{[l+m(m-1)/2\leq l+m(m+1)/2]} \cong \widetilde{X}^{[l]}\). 

Let us write $\oO_X = \CC[[x,y]]/f(x,y)$. 
By the Weierstrass preparation theorem and unibrachness
we may choose variables so that
\[f(x,y) = y^m + 
y^{m-1} x^2 f_1(x) + y^{m-2} x^3 f_2(x) + \ldots + x^{m+1} f_m(x)\]
Then $x^{-m} f(x, x. \frac{y}{x}) \in \CC[[x,\frac{y}{x}]]$, and 
$\oO_{\widetilde{X}} = \CC[[x,\frac{y}{x}]]/x^{-m} f(x,x.\frac{y}{x})$.  In particular
$\oO_{\widetilde{X}} / \oO_X$ is generated as an $\oO_X$-module by 
\(1,\frac{y}{x},\ldots,(\frac{y}{x})^{m-1}\).  Therefore, 
\[x^{m-1} \oO_{\widetilde{X}} = (x^{m-1}, x^{m-2} y, \ldots, y^{m-1}) 
\subset \oO_X\]
As $f \in (x,y)^{m-1}$, we have 
\[\dim \oO_X / (x,y)^{m-1} = \dim \CC[[x,y]]/(x,y)^{m-1} = m(m-1)/2.\] 
Moreover for any $J \subset \oO_{\widetilde{X}}$, we have 
\[\dim \oO_X / x^{m-1} J = \dim \oO_X/ x^{m-1} \oO_{\widetilde{X}} +
\dim \oO_{\widetilde{X}} / J = m(m-1)/2 + \dim \oO_{\widetilde{X}}/J.\]
An identical argument shows 
$\dim \oO_X / x^m J = m(m+1)/2 + \dim \oO_{\widetilde{X}}/J$.  Finally 
$x^m J \subset (x,y) x^{m-1} J$.  In fact, these 
are equal since $\frac{y}{x} J \subset J$.

Therefore we define a map 
\(\Phi: \widetilde{X}^{[l]} \to X^{[l+m(m-1)/2\leq l+m(m+1)/2]}\) by 
\(\Phi(J) = (x^{m-1} J,x^m J)\).  The map is injective
because $x$ is not a zero divisor. 
 
To see that \(\Phi\) is surjective, suppose we are given $\oO_X$-ideals 
\(\oO_X \supset J \supset I \supset (x,y) J\) with \(\dim (J/I)  = m\).  
As $J$ is a {\em free} $\CC[[x]]$-module of rank $m$,
$\dim_\CC J/xJ = m$ and therefore $I = xJ$.  On the other hand since
$X$ has multiplicity $m$, we certainly have $\dim_\CC J/(x,y)J \le m$.
We conclude $xJ = (x,y)J$.  In particular $yJ \subset xJ$ therefore
$\frac{y}{x}J \subset J$, i.e., $J$ is an $\oO_{\widetilde{X}}$-module. 
It is elementary to show that {\em any} $\oO_{\widetilde{X}}$-submodule
$J \subset \oO_X$ must satisfy $J \subset x^{m-1} \oO_{\widetilde{X}}$. 
\end{proof}

\begin{remark} 
 The result holds for  non-unibranch singularities as well; the proof is more technical and will appear elsewhere.
\end{remark}

\subsection{Relation with the compactified Jacobian}
In this subsection, we provide  background and context for the statement of
 Proposition~\ref{prop:perverse}. 
Recall that 
for a {\em smooth} curve $C$, the Hilbert schemes $C^{[n]}$ are
just symmetric products, and as such their cohomology
may be computed by taking ${S}_n$ invariants: 
$\mathrm{H}^*(C^{[n]},\CC) = \mathrm{H}^*(C^n,\CC)^{{S}_n}
= (\mathrm{H}^*(C,\CC)^{\otimes n})^{{S}_n}$.  
On the other hand, $\mathrm{H}^i(J(C),\CC) = \bigwedge^i 
\mathrm{H}^1(C,\CC)$.
This leads to the following formula of Macdonald \cite{Mac}: 
\begin{equation} \label{eq:macdonaldseries}
  \sum_{d=0}^\infty \sum_{i=0}^{2d} q^{2d} \mathrm{H}^i (C^{[d]}, \CC)
  = \frac{ \sum\limits_{i=0}^{2g} 
    q^{2i} \bigwedge ^i (\mathrm{H}^1 (C, \CC)) }
 {(1-q^2)(1-q^2 \CC(-1) )}
  = 
  \frac{\sum\limits_{i=0}^{2g} q^{2i} \mathrm{H}^i(J(C),\CC)}{(1-q^2)
    (1-q^2 \CC(-1))} .
\end{equation}
The Tate twists in the denominator are necessary to make this an 
equality of Hodge structures.  Since all spaces are smooth and compact, 
taking weight polynomials amounts to replacing 
$\CC(-1)$ by $t^2$ and $\mathrm{H}^i(\cdots)$ by $t^i \dim \mathrm{H}^i(\cdots)$. 

Proposition~\ref{prop:perverse} says that an analogous formula holds for a {\it singular} plane curve \(C\), but we must take into account the  perverse filtration on \(\mathrm{H}^*(\overline{J}^0(C)) \). More precisely, 
let ${}^p  \mathrm{H}^i (\overline{J}^0(C))$ 
be the \(i\)th associated graded piece 
of the perverse filtration on \(\mathrm{H}^*(\overline{J}^0(C))\). Then
the main result of \cite{MY},\cite{MS} is that 

\begin{equation} 
  (1-q^2)(1-q^2 t^2) \sum_{d=0}^\infty q^{2d} \mathfrak{w} (C^{[d]}) 
  = 
  \sum\limits_{i=0}^{2g} q^{2i} \mathfrak{w}({}^p \mathrm{H}^i (\overline{J}^0(C)).
\end{equation}

The $q \to 1/qt$ symmetry of the LHS proven in Proposition 
\ref{prop:rands} manifests on the RHS as (relative) Poincar\'e duality. 

We  recall the definition of the perverse filtration for the interested reader. 
 Let $\pi: \cC \to B$ be a family of curves over a
smooth base, with the general fibre smooth and some special singular
fibre $C= \cC_b$ we are interested in.  Let $\pi^J:\mathcal{J} \to B$ 
be the relative compactified Jacobian.  As shown in \cite{FGvS}, 
there exist families such that $\mathcal{J}$ is smooth; fix any such. 
Then from the decomposition
theorem of Beilinson, Bernstein, and Deligne \cite{BBD} we learn that 
$$\mathrm{R}\pi^J_* \CC[g + \dim B] = \bigoplus \mathrm 
({}^p \mathrm{R}^{g+i} \pi^J_* \CC[g + \dim B])[-i].$$ 
 Passing to the central fibre, we
write $${}^p \mathrm{H}^i (\jJ_b) := ({}^p \mathrm{R}^{g+i} \pi^J_* \CC[g + \dim B])_b[-g-i].$$
These are complexes of mixed Hodge modules, which should be Tate-twisted to ensure 
$\mathrm{H}^j(\jJ_b) = \bigoplus \mathrm{H}^j ({}^p  \mathrm{H}^i (\jJ_b))$.\footnote{This
direct sum decomposition is not canonical, but it does come from a canonical filtration;
these matters are irrelevant here.}

It is sensible to take weight polynomials of the ${}^p \mathrm{H}^i (\jJ_b)$.
It is shown in \cite{MY, MS} that
\begin{equation} \label{eq:myms}
  (1-q^2)(1-q^2 t^2) \sum_{d=0}^\infty \sum_{i=0}^{2d} q^{2d} \mathfrak{w} (\cC_b^{[d]}) 
  = 
  \sum\limits_{i=-g}^{g} q^{2i} \mathfrak{w}({}^p \mathrm{H}^i (\jJ_b)).
\end{equation}
As the LHS did not depend on the family, we learn a posteriori 
the same for the RHS.
Equation \ref{eq:myms} is proven by 
showing that no summand of 
$\mathrm{R}\pi^J_* \CC[g + \dim B]$ and $\mathrm{R}\pi^{[d]}_* \CC[d + \dim B]$
is supported in positive codimension, and thus we can check on the generic point
where the assertion reduces to Equation (\ref{eq:macdonaldseries}).  
For the Jacobians
this follows from Ngo's support theorem \cite{N}; 
the geometric content of \cite{MY, MS} was to
establish the same for the Hilbert schemes.

\section{Equations for  Hilbert schemes} \label{sec:syz}

In this section we prove that the Hilbert schemes and nested Hilbert schemes
for unibranch singularities with a single Puiseux pair (e.g. $x^k = y^n$ with
$k, n$ coprime) admit a stratification by cells which admit bijective morphisms
from various $\AA^N$.  We give explicitly the dimensions $N$ in terms of certain
combinatorial data. 
 Our approach is adapted from the methods of Piontkowski \cite{Pi}.

\subsection{Semigroups, stratifications, and syzygies}
 Let $X$ be the germ of a unibranch plane curve singularity
 with complete local ring $\oO_X \subset\mathbb{C}[[t]].$
 We have the valuation $\ord: \CC[[t]]\setminus\{0\} \to \NN$ 
 which takes $\lambda t^k + O(t^{k+1})  \mapsto  k$.
 The set  $\Gamma:=\ord(\mathcal{O}_X\setminus \{0\})$ 
 is a semigroup.
 Given an ideal $J\subset \mathcal{O}_X$ the set $\ord(J\setminus\{0\})$ 
 is a $\Gamma$-module which we call the {\it symbol} of $J$.   
 We will study the geometry of the moduli
 space of ideals with given symbol:
 $$X^{[\jj]}:=\{ J\subset \mathcal{O}_X|\ord(J)=\jj\}.$$

 Let $\CC[\Gamma] = \CC[t^i | i \in \Gamma ]$.  
 For a $\Gamma$-submodule $\jj \subset \ZZ$, we write
 $\underline{\jj} = (t^j|j\in \jj) \CC[\Gamma]$ for the associated monomial
 ideal, and $c(\jj):=\min\{j \in \jj\,|\,j+\NN \subset \jj\}$ for its conductor.

 We choose a basis of $\oO_X$ compatible with the monomial basis of 
 $\CC[\Gamma]$:
 $$\phi_i=t^i+\sum_{j>i} a_{ij} t^j,\quad i\in \Gamma.$$ 
 Evidently the multiplication matrix in this basis is upper triangular:
 \begin{equation} \label{eq:uppert}
   \phi_\alpha\cdot\phi_\beta= \phi_{\alpha+\beta} + 
   \sum_{\gamma>\alpha+\beta} C^\gamma_{\alpha,\beta}\phi_\gamma
 \end{equation}

 Below we use the following map:
 $$ \gr: \mathcal{O}_X\to \CC[\Gamma]:\quad \gr(f):=t^{\ord(f)}([t^{\ord(f)}]f).$$
 where \(([t^i]f)\) denotes the coefficient of \(t^i\) in \(f\). 
 We say $f\in \oO_X$ lifts $at^i\in \CC[\Gamma]$ iff $\gr(f)=at^i$.

Suppose we are given an ideal $J$ with symbol $\jj$. 
For any element $j\in\jj$ there is a unique element $\tau_j\in J$ of the form:
$$\tau_j=\phi_j+\sum_{k\in \Gamma_{>j}\setminus\jj}\lambda^{k-j}\phi_k.$$
This observation motivates us to study the following map. 
Fix generators
$\gamma_1,\dots,\gamma_n$ of $\jj$.  Let $Gen = \Spec\,\CC[\lambda_j^{k-\gamma_j}\,|\,k 
\in \Gamma_{>\gamma_j}\setminus\jj]$ be an affine space of dimension 
$N=\sum_j |\Gamma_{>\gamma_j}\setminus\jj|$.  Then we define deformations of the generators
\[\tau_{\gamma_j}(\lambda_\bullet^\bullet)=
\phi_{\gamma_j}+\sum_{k\in\Gamma_{>\gamma_j}\setminus\jj}\lambda^{k-\gamma_j}_j\phi_k\]
and an ``exponential'' map 
\begin{eqnarray*}
  \Exp_\gamma: Gen & \to & \coprod X^{[n]} \\ 
    \lambda & \mapsto & (\tau_{\gamma_1},\dots,\tau_{\gamma_n})
\end{eqnarray*}

Note that $\Exp_\gamma(\lambda)$ may have different colengths at different 
$\lambda$; taking a flattening stratification shows
$\Exp_\gamma$ is constructible. It is easy to see from Theorem~27 in \cite{OS}  that
$\Exp_\gamma^{-1}(X^{[\# \Gamma \setminus \jj]}) = \Exp_\gamma^{-1}(X^{[\jj]})$ and
that the map restricts to a bijective morphism
$\Exp_\gamma: \Exp_\gamma^{-1}(X^{[\jj]})\to X^{[\jj]}$.  
From now on, we tacitly  identify \(X^{[\jj]}\) with \(  \Exp_\gamma^{-1}(X^{[\jj]}) \subset Gen\).
We illustrate the behavior at points $\lambda \in Gen \setminus \Exp_\gamma^{-1}(X^{[\jj]})$:

\begin{example} 
Let $\oO_X = \CC[[t^3,t^7]]$ and $\jj = \langle 6, 10 \rangle$.
Then $(t^6 + t^7, t^{10}) \in Gen$ 
is generated by elements of orders $6$ and $10$, and $14 \notin 
\langle 6, 10 \rangle$.  On the other hand 
$\ord (t^6 + t^7, t^{10}) = \langle 6, 10, 14 \rangle$ because
$t^7 (t^6 + t^7) - t^3(t^{10}) = t^{14}$, so $(t^6 + t^7, t^{10}) \notin 
\Exp_\gamma^{-1}(X^{[\jj]})$.
\end{example}

That is, although the orders of the generators of the ideal 
$\Exp(\lambda)$ generate $\jj$, it can and does happen that 
some $\oO_X$-linear combination of the generators has order 
$\notin \jj$.
To prevent this from happening one needs to control the 
syzygies (relations 
between generators).

The choice of generators $\gamma_i$ of the ideal $\jj$
determines a surjection $G: \CC[\Gamma]^{\oplus n} \to \underline{\jj}$.  
Extend this to a presentation: 
\begin{equation}\label{GammaExSeq}
  0\leftarrow\CC[\Gamma]/\underline{\jj}\leftarrow \CC[\Gamma] \xleftarrow{G}
  \CC[\Gamma]^{\oplus n}\xleftarrow{S}\CC[\Gamma]^{\oplus m}
\end{equation}
where the matrix of syzygies $S=(\vec{s}_1,\dots,\vec{s}_m)$
is homogenous, in the sense that 
$(\vec{s}_i)_j=u^j_it^{\sigma_i-\gamma_j}$ for some constants $u^j_i\in\CC$. 
We call $\sigma_i$ the order of the syzygy $\vec{s}_i$.
We regard $G$ as a row vector, and $\vec{s}_i$ as the columns of $S$. 

The choice of $\lambda\in Gen$ determines
a lift $\mathcal{G}_\lambda\in 
\Hom_{\oO_X}(\oO_X^{\oplus n},\oO_X)$ of $G$
by the formula
$$(\mathcal{G}_\lambda)_j:=\tau_{\gamma_j}(\lambda^\bullet_\bullet) = \phi_{\gamma_j}+
\sum_{k\in\Gamma_{>\gamma_j}\setminus\jj}\lambda^{k-\gamma_j}_j\phi_k.$$

We define $\ord$ and $\gr$ on $\mathcal{O}_X^{\oplus n}$, note that these
{\em do not} act entry-wise:
\begin{gather*}
\ord: \oO_X^{\oplus n}\to \ZZ: \quad \ord(v)=\min\{\ord(v_j)+\gamma_j\},\\
gr:\oO_X^{\oplus n}\to \CC[\Gamma]^n:\quad 
gr(v)_j:=t^{d-\gamma_j}[t^{d-\gamma_j}]v_j,\quad d=\ord(v).
\end{gather*}
We define 
$gr: \Hom_{\oO_X}(\oO_X^{\oplus m},\oO_X^{\oplus n})\to 
\Hom_{\CC[\Gamma]}(\CC[\Gamma]^{\oplus m},\CC[\Gamma]^{\oplus n})$ column by column
by the formula $\gr({s}_1,\ldots,{s}_m) = 
(\gr  {s}_1,\ldots,\gr {s}_m)$. 

\begin{lemma} \label{lem:cutbysyz}
Fix $\lambda \in Gen$. The following are equivalent
\begin{enumerate}
  \item There exists $\sS \in \Hom_{\oO_{X}}(\oO_{{X}}^{\oplus m},\oO_{{X}}^{\oplus n})$
    such that 
    $\gr(\mathcal{S})=S$ and every entry of
    $\mathcal{G}_\lambda\circ\mathcal{S}$ has order at least 
    $c(\jj)$.
  \item There exists 
    $\widetilde{\sS} \in \Hom_{\oO_{X}}(\oO_{{X}}^{\oplus m},\oO_{{X}}^{\oplus n})$
    such that 
    $gr(\widetilde{\mathcal{S}})=S$ and 
    $\mathcal{G}_\lambda\circ\widetilde{\mathcal{S}}=0$
  \item $\lambda \in X^{[\jj]}$. 
\end{enumerate}
\end{lemma}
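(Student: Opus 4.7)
The plan is to establish the cyclic chain $(2) \Rightarrow (1) \Rightarrow (3) \Rightarrow (2)$. The first implication is immediate, since a $\widetilde{\sS}$ witnessing (2) a fortiori witnesses (1): all entries of the zero map vanish, hence have order $\geq c(\jj)$.

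For $(1) \Rightarrow (3)$, let $J := (\tau_{\gamma_1},\dots,\tau_{\gamma_n}) \subset \oO_X$. The inclusion $\jj \subset \ord(J)$ is clear from the orders of the generators, so I must show that any nonzero $w = \mathcal{G}_\lambda \cdot v \in J$ with $\ord(w) < c(\jj)$ has $\ord(w) \in \jj$ (elements of order $\geq c(\jj)$ are automatically in $\jj$). Setting $d := \ord(v) = \min_j(\ord(v_j)+\gamma_j)$, one has $\ord(w) \geq d$, with equality iff $G \cdot \gr(v) \neq 0$, in which case $\gr(w) = G \cdot \gr(v)$ is a nonzero multiple of $t^d \in \underline{\jj}$, forcing $d = \ord(w) \in \jj$. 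Otherwise $\gr(v) \in \ker G$ lies in the image of $S$, i.e.\ equals $\sum_i b_i \vec{s}_i$ for some $b_i \in \CC[\Gamma]$. Lifting each $b_i$ to $\tilde b_i \in \oO_X$ with $\gr(\tilde b_i) = b_i$ and replacing $v$ by $v' := v - \sum_i \tilde b_i \sS_i$, the leading parts cancel so $\ord(v') > d$, while $\mathcal{G}_\lambda \cdot v'$ differs from $w$ by $\sum_i \tilde b_i (\mathcal{G}_\lambda \circ \sS)_i$, a sum of terms of order $\geq c(\jj)$ by hypothesis~(1). Iterating strictly increases $\ord(v^{(k)})$ while keeping $\ord(w^{(k)}) = \ord(w) < c(\jj)$ fixed; after finitely many steps $\ord(v^{(k)})$ catches up with $\ord(w^{(k)})$, the first alternative fires, and we conclude $\ord(w) \in \jj$.

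For $(3) \Rightarrow (2)$, I build each column $\widetilde{\sS}_i$ of $\widetilde{\sS}$ by $t$-adic successive approximation. Start with the tautological lift $\widetilde{\sS}_i^{(0)}$ whose $j$th entry is $u_i^j \phi_{\sigma_i - \gamma_j}$, interpreted as $0$ when $\sigma_i - \gamma_j \notin \Gamma$; then $\gr(\widetilde{\sS}_i^{(0)}) = \vec{s}_i$. Because $\vec{s}_i$ is a syzygy over $\CC[\Gamma]$, the anticipated leading term of $\mathcal{G}_\lambda \cdot \widetilde{\sS}_i^{(0)}$ at order $\sigma_i$ cancels, so this product lies in $J$ with some order $d_0 > \sigma_i$. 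Hypothesis~(3) forces $d_0 \in \jj$, so write $d_0 = \gamma_{j_0} + e_0$ with $e_0 \in \Gamma$; subtracting a suitable scalar multiple of $\phi_{e_0}$ from the $j_0$th entry of $\widetilde{\sS}_i^{(0)}$ kills the leading term of $\mathcal{G}_\lambda \cdot \widetilde{\sS}_i^{(0)}$. The correction has $\oO_X^n$-order $e_0 + \gamma_{j_0} = d_0 > \sigma_i$, so $\gr(\widetilde{\sS}_i^{(1)}) = \vec{s}_i$ is preserved, and the new product has order $> d_0$. Iterating produces corrections of strictly increasing order; since $\oO_X^n$ is $(t)$-adically complete, the sequence converges to a limit $\widetilde{\sS}_i$ with $\gr(\widetilde{\sS}_i) = \vec{s}_i$ and $\mathcal{G}_\lambda \cdot \widetilde{\sS}_i = 0$. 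Assembling the columns yields the desired $\widetilde{\sS}$.

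The principal obstacle is the last step: the inductive reduction works only because hypothesis~(3) ensures the remainder $\mathcal{G}_\lambda \cdot \widetilde{\sS}_i^{(k)}$ always has order in $\jj$, so its leading term can be cancelled by a single scalar multiple of some $\phi_e \cdot \tau_{\gamma_j}$. Without~(3) one could encounter a remainder whose order is not expressible as $\gamma_j + (\text{element of }\Gamma)$, obstructing the next step. Completeness of $\oO_X$ and strict monotonicity of the correction orders then deliver convergence.
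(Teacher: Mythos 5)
Your proof is correct and uses essentially the same successive-approximation technique as the paper's: the paper treats $(2)$ as the hub (proving $(1)\Leftrightarrow(2)$ and $(2)\Leftrightarrow(3)$), whereas you run the cycle $(2)\Rightarrow(1)\Rightarrow(3)\Rightarrow(2)$, merging the paper's $(1)\Rightarrow(2)$ and $(2)\Rightarrow(3)$ steps into a direct $(1)\Rightarrow(3)$ that works with the approximate syzygies $\sS$ (entries of $\gG_\lambda\circ\sS$ of order $\geq c(\jj)$) rather than exact ones. The leading-term bookkeeping, the use of exactness of the graded presentation, the $t$-adic convergence, and the termination via the conductor $c(\jj)$ all match the paper's argument, so this is the same proof differently arranged.
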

\begin{proof}
  ($1 \implies 2$).  From a column $s$ 
  of $\sS$, we will produce a column
  $\widetilde{s}$ of $\widetilde{\sS}$.  By the hypothesis 
 each term of  of $\gG_\lambda s$ is in \(\jj\). Thus we can write  $\gG_\lambda s = \sum f_i \tau_{\gamma_i}$ for some $f_i$.
  Then set $\widetilde{s} = s - (f_1,\ldots,f_n)$.  The
  converse ($2 \implies 1$) is obvious.

  ($3 \implies 2$). 
  Let $\vec{s}$ be a column of $S$; then  
  $\vec{s}_j = c_j t^{l-\gamma_j}$ 
  for some constants $c_i$ such that $\sum c_j = 0$. 
  Let us define a first approximation $s^l$ by $(s^l)_j = 
  c_j \phi_{l-\gamma_i}$; evidently
  $\gr(s^l) = \vec{s}$ and each term of $\gG_\lambda s^l$ has order
  greater than $l$.  By the hypothesis these terms have order
  in  $\jj_{>l}$.   Assume now we have found $s^r$ such that
  $\gr(s^r) = \vec{s}$ and each term of $\gG_\lambda s^r$ has order
  in $\jj_{>r}$.  Then 
  we can write ${\rm gr}(\gG_\lambda s^r) =\sum_{j=1}^n b_j t^{d-\gamma_j} t^{\gamma_j}$,
  where $d={\rm ord}(\gG_\lambda s^r)$ and $b_j t^{d-\gamma_j}\in \CC[\Gamma]$, $j=1,\dots,n$.
 The induction step is given by the formula
  $s^{r+1} = s^r - (b_1 \phi_{d-\gamma_1},\ldots,b_n\phi_{d-\gamma_n})$.

  ($2 \implies 3$).  Assume $\Exp_\gamma(\lambda) \notin X^{[\jj]}$. 
  Then there exists $\varphi \in \oO_X^{\oplus n}$ such that
  $\ord(\mathcal{G}_\lambda(\varphi))\notin \jj$.  Observe
  this implies  $G(\gr(\varphi_1),\dots,\gr(\varphi_n))=0$.
  Therefore we may find $\psi\in \oO_X^{\oplus n}$ such that 
  $\mathcal{G}_\lambda(\psi)=0$ and $\gr(\psi)=\gr(\varphi)$.
  We have $\ord(\varphi-\psi)>\ord(\varphi)$ and
  $\mathcal{G}_\lambda(\varphi-\psi)=
  \mathcal{G}_\lambda(\varphi)$.  Continuing this process
  we may produce an element $\widetilde{\varphi}$ 
  of arbitrarily high order with $\ord(\mathcal{G}_\lambda(\varphi))\notin \jj$.
  However, $\ord(\mathcal{G}_\lambda (\widetilde{\varphi})) > \ord \widetilde{\varphi}$, 
  so once $\widetilde{\varphi} > c(\jj)$ we find a contradiction.
\end{proof}

Let $\widetilde{Syz}$ be the (infinite dimensional) space 
parameterizing possible
syzygies. 
From the lemma it follows that $X^{[\jj]}$ is the image in
$Gen$ of the variety in $\xX \subset Gen \times \widetilde{Syz}$ 
cut out by the equation
$\gG \circ \sS = 0$.  We now describe a 
finite dimensional affine slice $Syz \subset \widetilde{Syz}$ 
such that $\xX \cap Syz = X^{[\jj]}$. For each $s \in \jj_{\le c(\jj)}$ fix a decomposition 
$s = \gamma_{g(s)} + \rho(s)$ for $\rho(s) \in \Gamma$.  

Let $Syz$ be the affine space with 
coordinates $\nu_{is}^{s-\sigma_i}$ where 
$i=1,\dots,m$ and $c(\jj)>s>\sigma_i$.  To a point $\nu$ in $Syz$ we assign
a $n\times m$ matrix with entries:
$$(\mathcal{S}_\nu)_i^j=u_i^j\phi_{\sigma_i-\gamma_j}+
\sum_{\substack{c(\jj) > s >\sigma_i \\ g(s) = j}}
\nu_{is}^{s-\sigma_i}\phi_{s - \gamma_j}.$$

\begin{proposition}\label{Wjj}
The subvariety of  $Gen\times Syz$
defined by the equation 
$\mathcal{G}_\lambda\circ\mathcal{S}_\nu= O(t^{c(\jj)})$
maps bijectively onto $X^{[\jj]}$. 
\end{proposition}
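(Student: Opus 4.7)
The plan is to solve the equations defining $\mathcal{W} \subset Gen \times Syz$ inductively on the order $s$, exploiting the upper-triangular multiplication rule \eqref{eq:uppert}. Fix $\lambda \in Gen$; for each column index $i$ and each $s$ with $\sigma_i \leq s < c(\jj)$, write $E_i^s(\lambda,\nu)$ for the coefficient of $t^s$ in the $i$-th entry of $\gG_\lambda\circ \mathcal{S}_\nu \in \oO_X^{\oplus m}$. The variety $\mathcal{W}$ is cut out by the equations $\{E_i^s = 0\}$, the case $s=\sigma_i$ being automatic from the syzygy relation $\sum_j u_i^j = 0$.

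The key observation is the following triangular structure. The coordinate $\nu_{is}^{s-\sigma_i}$ is introduced as the coefficient of $\phi_{s-\gamma_{g(s)}}$ in the entry $(\mathcal{S}_\nu)_i^{g(s)}$, and so contributes to $\gG_\lambda\circ(\mathcal{S}_\nu)_i$ through the product $\nu_{is}^{s-\sigma_i}\cdot \phi_{s-\gamma_{g(s)}}\cdot \tau_{\gamma_{g(s)}}$. Since $\tau_{\gamma_{g(s)}} = \phi_{\gamma_{g(s)}} + (\text{higher})$ and $\phi_{s-\gamma_{g(s)}}\phi_{\gamma_{g(s)}} = \phi_s + (\text{higher})$ by \eqref{eq:uppert}, the parameter $\nu_{is}^{s-\sigma_i}$ enters $E_i^s$ linearly with coefficient exactly $1$, and enters $E_i^{s'}$ only for $s'\geq s$. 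All other $\nu$'s appearing in $E_i^s$ are indexed by strictly smaller orders.

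I would then induct on $s$ from $\min_i \sigma_i+1$ up to $c(\jj)-1$. At order $s$, for each $i$ with $\sigma_i < s$: if $s \in \jj$, the equation $E_i^s = 0$ uniquely determines $\nu_{is}^{s-\sigma_i}$ as a polynomial in $\lambda$ and the already-determined $\nu$'s; if $s \notin \jj$, there is no parameter at this order and $E_i^s = 0$ becomes a polynomial condition purely on $\lambda$ (after substituting the lower-order $\nu$'s), cutting out a subvariety $Y \subset Gen$. This construction shows that for each $\lambda \in Y$ there is a unique $\nu \in Syz$ with $(\lambda,\nu) \in \mathcal{W}$, so the projection $\pi:\mathcal{W} \to Gen$ is a bijective morphism onto $Y$.

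To finish I must check $Y = X^{[\jj]}$. The inclusion $Y \subset X^{[\jj]}$ is immediate from Lemma~\ref{lem:cutbysyz}(1), since $(\lambda,\nu) \in \mathcal{W}$ provides a lift $\mathcal{S}_\nu$ of $S$ with $\gG_\lambda\circ\mathcal{S}_\nu$ of order $\geq c(\jj)$ in each entry. The main subtlety is the reverse inclusion: assume $\lambda \in X^{[\jj]}$ and run the same inductive construction. The element $f_i := \gG_\lambda\circ(\mathcal{S}_\nu)_i$ lies in the ideal $\Exp_\gamma(\lambda)$, which by hypothesis has symbol $\jj$, so any nonzero $f_i$ has order in $\jj$. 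At each inductive step with $s \notin \jj$, the previous steps ensure $E_i^{s'}(\lambda,\nu) = 0$ for all $s' < s$, so $\ord(f_i) \geq s$; but $\ord(f_i) \in \jj \cup \{\infty\}$ forces $\ord(f_i) > s$ and hence $E_i^s = 0$ automatically. This symbol-based argument is the crucial step that converts the global condition $\lambda \in X^{[\jj]}$ into the pointwise vanishing of the coefficients $E_i^s$ at orders outside $\jj$.
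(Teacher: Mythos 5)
Your argument is correct and is essentially the paper's proof written out in coordinates: the triangular structure you isolate (each $\nu_{is}^{s-\sigma_i}$ entering $E_i^s$ with unit coefficient and only entering equations of order $\ge s$) is precisely the paper's observation that $G$ restricts to an isomorphism $W'_\jj \to \jj_{<c(\jj)}$, which underlies both its uniqueness step ($\ker G\cap W'_\jj=0$) and its inductive existence step. Your identification of the residual locus $Y$ with $X^{[\jj]}$ — via Lemma~\ref{lem:cutbysyz}(1) in one direction and, in the other, via the fact that the entries of $\gG_\lambda\circ\sS_\nu$ lie in $\Exp_\gamma(\lambda)$ and hence have orders in $\jj$ — is the same use of that lemma and of the symbol hypothesis that the paper makes.
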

\begin{proof}
  According to Lemma \ref{lem:cutbysyz}, we must show that
  if any matrix $\sS \in \Hom_{\oO_{X}}(\oO_{{X}}^{\oplus m},\oO_{{X}}^{\oplus n})$ 
  satisfies
  $\gr \sS = S$ and $\gG_\lambda \circ \sS = 0$, then 
  there is a unique matrix of the above form 
  such that $\gG_\lambda \circ \sS_\nu = O(t^{c(\jj)})$. 

  First we check uniqueness.  Suppose given $\lambda, \nu, \nu'$ 
  such that 
  $\mathcal{G}_\lambda\circ \mathcal{S}_{\nu}=
  \mathcal{G}_\lambda\circ\mathcal{S}_{\nu'}=0 \mod t^{c(\jj)}$.  
  On the one hand, the columns of 
  $gr(\mathcal{S}_{\nu'}-\mathcal{S}_{\nu})$ are in 
  the subspace 
  $W'_\jj \subset W_{\jj} = \oplus_j
  \CC[\Gamma]_{< c(\jj) - \gamma_j}$ spanned by elements
  of the form $t^{\rho(s)} e_{g(s)}$, where $e_{g(s)}$ is the unit
  vector in the $g(s)$-th summand.  By inspection
  $\ker G \cap W'_\jj = 0$. On the other hand the
  columns of $gr(\mathcal{S}_{\nu'}-\mathcal{S}_{\nu})$ 
  are necessarily in the kernel of $G$. 

  For existence, it suffices to observe 
  that $G(W'_\jj)=G(W_\jj)=\jj_{<c(\jj)}$. Indeed, we can compute $S_{\nu}$ by induction:
  let $\{ j_1<j_2<\dots<j_N\}=\jj\cap [0,c(\jj)]$ and suppose we found $\nu_k$
  such that $$ v_k:=\mathcal{G}_\lambda\circ\mathcal{S}_{\nu_k}=0 \mod t^{j_k}.$$
  By assumption $\lambda\in X^{[\jj]}$ so  the entries of the vector $v_k$ have orders which are elements of \(\jj\).
   Hence by the construction 
  of $W'_\jj$ there is $\nu'\in Syz$ such that ${g}r (\mathcal{G}_\lambda\circ \mathcal{S}_{\nu'})=
  {gr}(v_k).$ Thus for $\nu_{k+1}:=\nu_k-\nu'$ we have
  $$ \mathcal{G}_\lambda\circ\mathcal{S}_{\nu_{k+1}}=0 \mod t^{j_{k+1}}.$$
\end{proof}

For future use we write the matrix entries 
$(\mathcal{G}_\lambda\circ\mathcal{S}_\nu)_i  = 
\sum_j (\mathcal{G}_\lambda)_j (\sS_\nu)^j_i $ explicitly:

\[
\sum_j
\left(u_i^j\phi_{\sigma_i-\gamma_j} \phi_{\gamma_j}  + \!\!\!\!\!\!\!\!
\sum_{\substack{c(\jj)-\gamma_j>s>\sigma_i \\ g(s) = j}}   \!\!\!\!\!\!
\nu_{is}^{s-\sigma_i}\phi_{s - \gamma_j} \phi_{\gamma_j}
+
\!\!\!\!\sum_{k\in\Gamma_{>\gamma_j}\setminus\jj}\!\!\!
u_i^j \lambda^{k-\gamma_j}_j \phi_{\sigma_i-\gamma_j} \phi_k \, +
\!\!\!\!\!\!\!\sum_{\substack{c(\jj)-\gamma_j>s>\sigma_i \\ g(s) = j \\ k\in\Gamma_{>\gamma_j}\setminus\jj}}\!\!\!\!\!\!
\nu_{is}^{s-\sigma_i} 
\lambda^{k-\gamma_j}_j \phi_{s - \gamma_j} \phi_k \right)
\]

Let $\mathcal{I} \subset \CC[\lambda_\bullet^\bullet,\nu_{\bullet,\bullet}^\bullet]$ 
be the ideal defining the Hilbert scheme $X^{\jj}$, i.e., the ideal
generated by the entries of $\gG_\lambda \circ \sS_\nu$.
Then we write $X^{[\jj]}_{<r} \subset 
\mathrm{Spec}\, \CC[\lambda^{<r}_\bullet,\nu^{<r}_{\bullet,\bullet}]$ for the subscheme cut out by the ideal 
$\mathcal{I}^{<r} := \mathcal{I} \cap \CC[\lambda^{<r}_{\bullet},\nu^{<r}_{\bullet,\bullet}]$. 

Expand $(\mathcal{G}_\lambda\circ\mathcal{S}_\nu)_i$ in the basis
$\phi_k$, and denote by $Eq^r_i$ the coefficient of 
$\phi_{r+\sigma_i}$.
Implicitly $Eq^r_i$ does not occur
if $r+\sigma_i\notin \Gamma$ or $r+\sigma_i\ge c(\jj)$.
Accounting for the upper triangularity of the basis $\phi_k$, 
we see the nontrivial equations $Eq_{i}^r$  are of the following form:

\begin{gather*}
L^r_i+\mbox{  terms in } 
\lambda^{<r},\nu^{<r},\\ 
L^r_i:=\delta_{\jj\cap (\sigma_i,c(\jj))}(r+\sigma_i) \nu_{i,r+\sigma_i}^{r}+\sum_{j=1}^n
 \delta_{\Gamma\setminus\jj}(r+\gamma_j) u_i^j\lambda_j^{r}.
\end{gather*}

As
$\mathcal{I}^{<r+1}=(\mathcal{I}^{<r}, Eq_1^r,\dots,Eq_m^r)$, the space
$X^{[\jj]}_{<r+1}$ is cut out of $X^{[\jj]}_{<r} \times \mathrm{Spec}\, \CC[\lambda^{r},
\nu^{r}]$ by the ideal $(Eq_1^r,\dots,Eq_m^r)$.
We write $\pi_r:X^{[\jj]}_{<r+1} \to X^{[\jj]}_{<r}$ for the projection.
Once $r$ is greater than the 
conductor $c(\jj)$, we have
$X^{[\jj]}_{<r} = X^{[\jj]}$ by the equivalence of (1) and (2) of Lemma
\ref{lem:cutbysyz}.  Thus it remains to study the fibers in the sequence
\[ X^{[\jj]} \xrightarrow{\pi_{c(\jj)}}  X^{[\jj]}_{<c(\jj)} \xrightarrow{\pi_{c(\jj)-1}} 
\cdots \xrightarrow{\pi_0} X^{[\jj]}_{<0} = \mathrm{pt} \]

For general singularities, the projections are  not surjective and the fibers are hard to control. 
However, when the linear forms $L_{i}^r$ are independent, the fibers are affine 
spaces of constant dimension and the projections are surjective. 
We show next that this independence holds when the singularity is 
unibranch with a single Puiseux pair, and 
compute the dimensions of fibers.

\subsection{$\Gamma=\langle n, k\rangle$}

We now restrict ourselves to singularities $X$ with semigroup 
$\Gamma$ generated by two relatively
prime integers, $n$ and $k$.  The prototypical example is $x^n = y^k$,
but in fact such singularities vary with moduli \cite{ZT}. 
We will show in these cases that the Hilbert
schemes are stratified by affine cells. 

We often picture $\Gamma$ in terms of the coordinate plane, in which
we write $ni+kj$ in the unit square with bottom-left coordinate $(i,j)$. 
As every element $m \in \Gamma$ admits a unique presentation 
in the form $m = ak + bn$ where $0 \le a < n$, each occurs exactly once
in the semi-infinite strip of height $n$ in the first quadrant.  Ideals 
correspond to Young diagrams (or staircases) contained in the strip containing at most
$k$ columns of height strictly between $0$ and $n$.  For example,  
we assign the following staircase to $(t^{21}, t^{23}, t^{24}) \subset
\CC[[t^4, t^5]]$.  Bold numbers correspond to monomials in the ideal.

\begin{center}
\begin{tabular}{|c|c|c|c|c|c|c|c}
  \hline
  {\tiny 15} & {\tiny 19} & {\bf 23} & {\bf  27}  & {\bf 31} & {\bf 35} 
  & {\bf 39} & {\bf 43} \\
  \hline
  {\tiny 10} & {\tiny 14} & {\tiny 18} & {\tiny 22} & 
  {\bf 26} & {\bf 30} & {\bf 34} & {\bf 38} \\
  \hline
  {\tiny 5}  & {\tiny 9}  & {\tiny 13} & {\tiny 17} & 
  {\bf 21} & {\bf 25} & {\bf 29} & {\bf 33} \\
  \hline
  {\tiny 0}  & {\tiny 4}  & {\tiny 8}  & {\tiny 12} & 
  {\tiny 16} & {\tiny 20} & {\bf 24} & {\bf 28} \\
  \hline 
\end{tabular}
\end{center}

To see the generators and syzygies it is better to draw the infinite staircase
of which the elements in the ideal are above and those not in the ideal are below.
For example in the above case we get:

\begin{center}
\begin{tabular}{|c|c|c|c|c|c|c|c|c|c}
  \hline
  {\tiny 20}&{\bf 24}&{\bf 28}&{\bf 32}&{\bf 36}&{\bf 40}&{\bf 44}&{\bf 48}&{\bf 52}&{\bf 56}\\
  \hline
  {\tiny 15} & {\tiny 19} & {\bf 23} & {\bf  27}  & {\bf 31} & {\bf 35} 
  & {\bf 39} & {\bf 43} &{\bf 47}&{\bf 51} \\
  \hline
  {\tiny 10} & {\tiny 14} & {\tiny 18} & {\tiny 22} & 
  {\bf 26} & {\bf 30} & {\bf 34} & {\bf 38}&{\bf 42}&{\bf 46} \\
  \hline
  {\tiny 5}  & {\tiny 9}  & {\tiny 13} & {\tiny 17} & 
  {\bf 21} & {\bf 25} & {\bf 29} & {\bf 33}&{\bf 37}&{\bf 41} \\
  \hline
  {\tiny 0}  & {\tiny 4}  & {\tiny 8}  & {\tiny 12} & 
  {\tiny 16} & {\tiny 20} & {\bf 24} & {\bf 28}&{\bf 32}&{\bf 36} \\
  \hline 
  {\tiny -5}&{\tiny -1}&{\tiny 3}&{\tiny 7}&{\tiny 11}&{\tiny 15}&{\tiny 19}&{\bf 23}&{\bf 27}&{\bf 31}\\
  \hline
  {\tiny -10}&{\tiny -6}&{\tiny -2}&{\tiny 2}&{\tiny 6}&{\tiny 10}&{\tiny 14}&{\tiny 18}&{\tiny 22}&{\bf 26}\\
  \hline
  {\tiny -15}&{\tiny -11}&{\tiny -7}&{\tiny -3}&{\tiny 1}&{\tiny 5}&{\tiny 9}&{\tiny 13}&{\tiny 17}&{\bf 21}\\
  \hline
\end{tabular}
\end{center}

The generators of the ideal are the concave corners
of the staircase.  We order them $(\gamma_1,\dots,\gamma_m)$ 
in such a way that if one began ascending the staircase
at $\gamma_1$ then one would encounter them in order. 
For example in the above we may take $\gamma_1=24,\gamma_2=21,\gamma_3=23$.  
 Moreover we take the labels
modulo $m$, i.e., $\gamma_k = \gamma_{m+k}$.  
We write $\sigma_i$ for the outside the convex corner encountered between 
$\gamma_i$ and $\gamma_{i+1}$.  That is, writing $\gamma_i = a_i k + b_i n$ 
with $0 \le a_i < p$, we define $\sigma_i=a_{i+1} q+b_{i}p.$
In the example above we have $\sigma_1=29,\sigma_2=31,\sigma_3=28$. 
It is obvious from the pictorial description that 
the module of syzygies of $(t^{\gamma_1},\dots,t^{\gamma_m})$ is generated 
over $\CC[\Gamma]$ by the elements 
$\omega_i=e_i t^{\sigma_i-\gamma_i}-e_{i+1} t^{\sigma_i-\gamma_{i+1}}$ for $i=1,\dots,m.$

\begin{theorem} \label{thm:unnesteddims}
For $\jj = (\gamma_1,\ldots,\gamma_m) \subset \Gamma = \langle n, k \rangle$,
$X^{[\jj]}$ admits a bijective morphism from $\AA^{N(\jj)}$ where 
$$N(\jj)=\sum_i |\Gamma_{>\gamma_i}\setminus\jj|-\sum_i |\Gamma_{>\sigma_i}\setminus\jj|.$$
\end{theorem}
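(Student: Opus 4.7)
The plan is to combine Proposition~\ref{Wjj} with an inductive analysis of the filtration
\[ X^{[\jj]} \xrightarrow{\pi_{c(\jj)}}  X^{[\jj]}_{<c(\jj)} \xrightarrow{\pi_{c(\jj)-1}} \cdots \xrightarrow{\pi_0} X^{[\jj]}_{<0} = \mathrm{pt}. \]
Since $X^{[\jj]}_{<r+1}$ is cut out of $X^{[\jj]}_{<r}\times \Spec \CC[\lambda^r_\bullet,\nu^r_{\bullet,\bullet}]$ by the equations $Eq^r_i$ whose leading linear parts in the level-$r$ variables are the explicit forms $L_i^r$ recorded above, it suffices to prove that for each $r$ the nontrivial $\{L_i^r\}_i$ are linearly independent as functionals in the new variables. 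Granting this, each $\pi_r$ becomes the projection of a trivial affine bundle of known rank, $X^{[\jj]}$ receives a bijective morphism from an affine space obtained by iteratively choosing sections of the $\pi_r$, and its dimension telescopes to
\[ \sum_j |\Gamma_{>\gamma_j}\setminus\jj| + \sum_i |\jj\cap(\sigma_i,c(\jj))| - \sum_i |\Gamma\cap(\sigma_i,c(\jj))|. \]
Using the disjoint decomposition $\Gamma\cap(\sigma_i,c(\jj)) = (\Gamma_{>\sigma_i}\setminus\jj)\sqcup (\jj\cap(\sigma_i,c(\jj)))$ collapses this to the desired $N(\jj)$.

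Everything therefore reduces to the linear-independence claim, where the assumption $\Gamma=\langle n,k\rangle$ enters in an essential way: the syzygy module of $\underline{\jj}$ is generated by the minimal relations $\omega_i=e_it^{\sigma_i-\gamma_i}-e_{i+1}t^{\sigma_i-\gamma_{i+1}}$, so $u_i^j=0$ unless $j\in\{i,i+1\}$, and $L_i^r$ involves at most three variables. I would then argue in two cases. \emph{Case A:} when $r+\sigma_i\in\jj$, the variable $\nu^r_{i,r+\sigma_i}$ appears only in $L_i^r$ and provides a pivot; after Gaussian elimination using all such equations, we reduce to the Case~B system. \emph{Case B:} when $r+\sigma_i\in\Gamma\setminus\jj$, the form reduces to $L_i^r=\delta_{\Gamma\setminus\jj}(r+\gamma_i)\lambda_i^r-\delta_{\Gamma\setminus\jj}(r+\gamma_{i+1})\lambda_{i+1}^r$, involving only $\lambda$'s. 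A first observation is that $\sigma_i\in\jj$ together with $r+\sigma_i\notin\jj$ forces $r\notin\Gamma$, since $\jj$ is a $\Gamma$-module. One then views the remaining equations as edges of a graph on the vertex set $\{j:r+\gamma_j\in\Gamma\setminus\jj\}$, augmented by a ``ground'' vertex absorbing half-edges whose other endpoint is undefined; linear independence amounts to acyclicity of this graph.

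The main obstacle is this acyclicity claim. Because the generators $\gamma_j$ are arranged cyclically, the only possible cycle is the full cycle $\lambda_1^r\to\lambda_2^r\to\cdots\to\lambda_m^r\to\lambda_1^r$, which would require simultaneously that $r+\sigma_i\in\Gamma\setminus\jj$ for every $i$ and $r+\gamma_j\in\Gamma\setminus\jj$ for every $j$. Using the staircase picture of $\jj$ inside the $(k,n)$-strip, I would rule this out by a combinatorial argument: translating the concave boundary of $\jj$ by a gap $r\notin\Gamma$ and demanding that every translated generator and every translated outside corner remain in $\Gamma\setminus\jj$ forces the translated staircase to lie strictly between the boundary of $\jj$ and the coordinate edges of the strip, but the original staircase of $\jj$ already meets both coordinate edges, so any nonzero translate pushes some $r+\gamma_j$ outside $\Gamma$ and breaks the putative cycle. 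This is precisely where the single-Puiseux-pair hypothesis is essential: for semigroups with more generators, the syzygy matrix has more nonzero entries per column and the graph-theoretic reformulation fails. Once acyclicity is established, linear independence follows from the standard property of edge-vertex incidence matrices of forests, and the dimension computation above completes the proof.
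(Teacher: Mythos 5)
Your overall strategy is the same as the paper's: reduce to the linear independence of the forms $L_i^r$, use the $\nu$-variables as pivots when $r+\sigma_i\in\jj$, and for $r+\sigma_i\in\Gamma\setminus\jj$ analyze the resulting $\lambda$-only forms $L_i^r=\delta_{\Gamma\setminus\jj}(\gamma_{i+1}+r)\lambda^r_{i+1}-\delta_{\Gamma\setminus\jj}(\gamma_i+r)\lambda^r_i$ via the staircase picture. Your dimension bookkeeping also telescopes correctly to $N(\jj)$. However, there is a genuine gap in the acyclicity step. Once you introduce the ground vertex, the relevant graph is \emph{not} merely a subgraph of the $m$-cycle: cycles through the ground vertex are combinatorially possible, and these are exactly the dependencies you fail to exclude. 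Concretely, suppose $i,i+1,\dots,i+l-1$ is a maximal run of undeleted columns (so $r+\sigma_j\in\Gamma\setminus\jj$ for these $j$) with $\delta_{\Gamma\setminus\jj}(\gamma_i+r)=\delta_{\Gamma\setminus\jj}(\gamma_{i+l}+r)=0$ while the interior coefficients are nonzero. Then $\sum_{j=0}^{l-1}L^r_{i+j}$ telescopes to zero, even though no full cycle exists; in the extreme case $l=1$ with both adjacent coefficients vanishing, the single form $L_i^r$ is identically zero. Your argument ruling out the full cycle (all $m$ corners and all $m$ generators shifted into $\Gamma\setminus\jj$) says nothing about these shorter ground-to-ground paths.

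This is precisely the point where the paper's proof does its real work: after showing at least one column is deleted, it decomposes the coefficient matrix into diagonal blocks of size $(l+1)\times l$ corresponding to connected components of the intersection of the shifted staircase with the region between the $\jj$-staircase and the $\Gamma$-staircase, classifies the blocks into four types according to how the component enters and exits that region, observes that exactly one type (both extremal coefficients zero) is rank-deficient, and then shows geometrically that this degenerate type cannot occur --- if it did, the component would be unique and the entire shifted staircase would lie below the original one, reproducing the same contradiction with $r>0$. To complete your proof you must supply this exclusion (or an equivalent argument that a maximal run of undeleted columns always has at least one extremal coefficient equal to $1$); without it, the claimed linear independence, and hence the surjectivity and constant fiber dimension of the maps $\pi_r$, is unproved.
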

\begin{proof}
  We will study the maps 
  $X^{[\jj]}_{<r+1} \to X^{[\jj]}_{<r}$. 
  We have seen that $X^{[\jj]}_{<r+1}$ is cut out of 
  $\Spec\,\CC[\lambda^r_{\bullet}, \nu^r_{\bullet,\bullet}]$
  by some equations $Eq_i^r$, where by our description of the syzygies 
  $i = 1,\ldots,r$.  We have written $L_i^r$ for the linear term of $Eq_i^r$.  
  Once we show that $L_i^r$ are linearly independent, it will follow that the 
  zero locus of the $Eq_i^r$ is isomorphic to the zero locus of the $L_i^r$.
  Thus $X^{[\jj]}_{<r+1}$ is a vector bundle over $X^{[\jj]}_{<r}$ of fiber dimension
  \begin{align*}
   \lambda^m_{\bullet} + \# \nu^r_{\bullet,\bullet} - \# Eq_i^r & = 
    \sum_{i=1}^m \delta_{\Gamma\setminus\jj}(r+\gamma_i) +
  \sum_{i=1}^m\delta_{\jj\cap(0,c(\jj))}(r+\sigma_i) - 
  \sum_{i=1}^m\delta_{\Gamma \cap(0,c(\jj))}(r+\sigma_i) \\
   & = 
  \sum_{i=1}^m \delta_{\Gamma\setminus\jj}(r+\gamma_i)-
  \sum_{i=1}^m\delta_{\Gamma\setminus\jj}(r+\sigma_i).
  \end{align*}
  Summing over $r$ gives the claimed value of $N(\jj)$. 
  
  It remains to prove the linear independence of the forms $L_i^r$. 
  If $r+\sigma_i\notin \jj\cap(\sigma_i,c)$ then $L_i^r$ does not 
  depend on variables $\nu_{\bullet,\bullet}^r$.  On the 
  other hand if $r+\sigma_i\in \jj\cap(\sigma_i,c)$ then $L_i^r$ does depend
  on $\nu_{\bullet,\bullet}^r$. Moreover, the linear forms $L_i^r$, 
  $r+\sigma_i\in\jj\cap(\sigma_i,c)$ are mutually linearly independent
  and linear span of these forms does not contain a nontrivial linear form independent
  of $\nu_{\bullet,\bullet}^m$.
  Thus to finish the proof we need to show that the linear forms:
  $$ L_i^r,\quad r+\sigma_i\in\Gamma\setminus\jj$$
  are linearly independent.
  The easiest proof of this statement uses a pictorial interpretation of the syzygies and the linear forms 
  $L_i^r$ which we explain below.
 
Suppose we are given $i$ and $r>0$ such that $r+\sigma_i\in \Gamma\setminus\jj$. 
From our description of the syzygies
we see that $L^r_i$ is of the form:
$$ \lambda^r_{i+1}\delta_{\Gamma\setminus\jj}(\gamma_{i+1}+r)-
\lambda^r_i\delta_{\Gamma\setminus\jj}(\gamma_i+r).$$
We can visualize the nontrivial coefficients of the \(L^r_i\) in the following way. 
Begin with an \(m\times m\) matrix \(A\) with \(A_{ii}=-1\) and \(A_{i+1,i}=1\) (where the latter equation is to be interpreted \(\mod m\)). If \(r + \sigma_i \not \in \Gamma \setminus \jj\), we delete the \(i\)th column of \(A\). Finally, we replace some of the nonzero coefficients with \(0\), depending on the value of \( \delta_{\Gamma\setminus\jj}(\gamma_i+r)\). 

To interpret the condition \(r + \sigma_i  \in \Gamma \setminus \jj\), write
$r=\alpha k+\beta n$ and let \(v\) be the vector \((\alpha,\beta)\). Consider the new infinite staircase obtained by translating the original staircase by \(v\). 
 Then  \(r + \sigma_i  \in \Gamma \setminus \jj\) if and only if the translate of the convex corner corresponding 
to $\sigma_i$ shifts to a square below the original staircase  and above the (rather simpler) staircase defined by the elements of \(\Gamma\). 

Now if \(r + \sigma_i  \in \Gamma \setminus \jj\) for all \(i\),  the entire translate of the infinite staircase by \(v\) would lie below the infinite staircase. This would imply \(\jj \subset \jj+r\), which is impossible, since \(r>0\). Thus there is at least one deleted column in the matrix \(A\). Without loss  of generality, we may renumber the \(\gamma_i\) so that the final column is deleted. The remaining matrix is supported on the diagonal and on the off-diagonal just below it. After deleting additional columns, the remaining matrix will be block diagonal. The number of blocks will be the number of ``runs" of consecutive undeleted columns, and the size of each block will be   \((i+1)\times i\), where \(i\) is the length of the run. 

More geometrically, each block corresponds to a  connected component of 
the intersection of the shifted infinite staircase with the region below the original staircase and above the \(\Gamma\)-staircase.  For example, in the case of the picture below, the matrix 
consists of three blocks of sizes: $i=1,2,1$.

\begin{center}
 \includegraphics[scale=0.3]{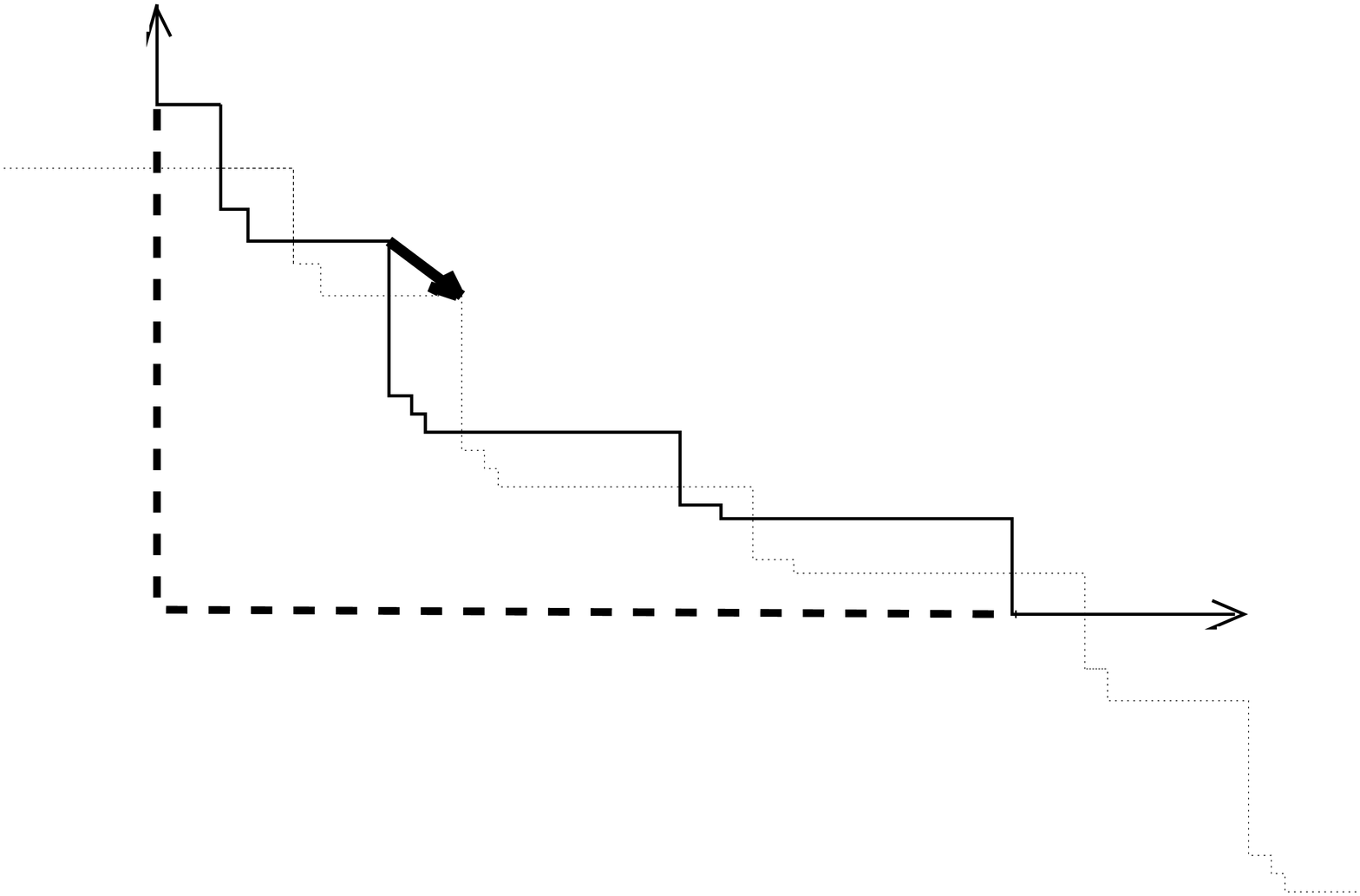}
\end{center}
Now we need to show that the diagonal blocks are of maximal rank. There are four types of diagonal block, depending on how the path exits the finite stair case. These four types are shown in the figure below. As in the previous figure, the fine dotted line marks the translated copy of the infinite staircase, the heavy solid line is the finite staircase, and the heavy dashed lines mark the boundary of the first quadrant. 
\begin{center}
 \includegraphics[scale=0.7]{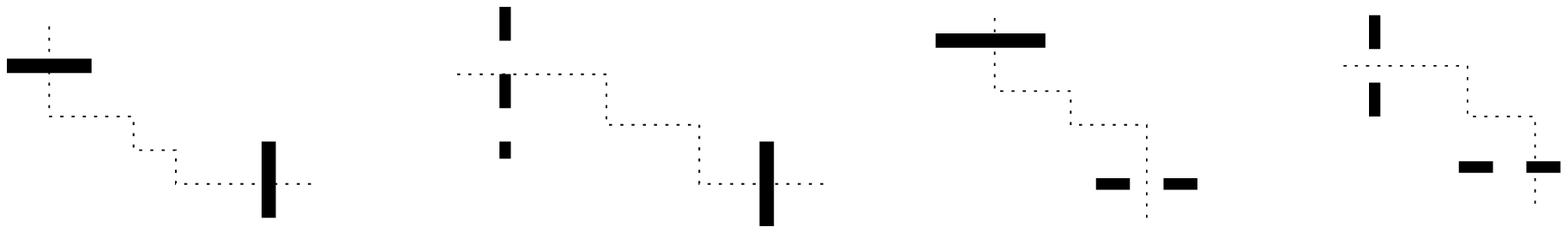}
\end{center}
In the first three cases the corresponding diagonal 
blocks are of maximal rank but in the last case the matrix
is degenerate. To finish the proof, we must check that the last case does not occur.
Indeed, if it did, we see from the picture  that  the fine dotted curve would have a unique connected component  between
the heavy dotted curve and the bold curve. Thus the fine dotted curve would lie under the bold curve, but we showed above 
that this is impossible. 
\end{proof}

\begin{remark}
Above argument actually shows that $X^{[\jj]}\simeq\mathbb{A}^{N(\jj)}$. Indeed, in the proof we show that 
$\pi$: $X^{[\jj]}_{<r+1}\to X^{[\jj]}_{<r}$ is a smooth affine fibration. But by construction the fibration
is a subfibration of the trivial fibration and the linear part of the equations defining the subfibration
does not depend on the point on $X^{[\jj]}_{<r}$. Thus $\pi$ is a trivial fibration.
\end{remark}

\subsection{Nested Hilbert schemes}

We turn to the study of the nested Hilbert schemes. 
Let $M$ be the maximal ideal of $\oO_X$. 
Recall that $X^{[a \le b]} \subset X^{[a]} \times X^{[b]}$ was defined to
be the locus 
$\{(J,I) \in X^{[a]} \times X^{[b]} \, | \, 
J \supset I \supset MJ \}$.  This space admits a stratification
by type of semigroup-module.  Indeed let 
$X^{[\jj \supset \ii]} \subset X^{[\jj]} \times X^{[\ii]}$
be the locus of $\{(J,I) \in X^{[\jj]} \times X^{[\ii]}\,|\,
J \supset I \supset M J \}$.  Evidently 
\[X^{[a \le b]} = \coprod_{\substack{\#\Gamma \setminus \jj = a \\
\#\Gamma \setminus \ii = b}} X^{[\jj \supset \ii]}\] 

For the source of the nested version of the ``exponential'' map, we take an
affine space $Gen$ with coordinates $\lambda_i^{k-\gamma_i}$
where $k \in \Gamma_{> \gamma_i} \setminus \jj$ if $\gamma_i \notin \ii$,
and $k \in \Gamma_{> \gamma_i} \setminus \ii$ if $\gamma_i \in \ii$.  
We define as before
\[\tau_{\gamma_j}(\lambda^\bullet_\bullet) := \phi_{\gamma_j} + \sum_{s} \lambda_j^{s-\gamma_j} 
  \phi_{s-\gamma_j} \]
and set 
\begin{eqnarray*}
  J(\lambda^\bullet_\bullet) & = & (\tau_{\gamma_i}(\lambda^\bullet_\bullet)) \\
  I(\lambda^\bullet_\bullet) & = & (\tau_{\gamma_i}(\lambda^\bullet_\bullet)\,|\, \gamma_i \in \ii)
  + M J(\lambda^\bullet_\bullet)
\end{eqnarray*}
Finally we define 
$\Exp(\lambda^\bullet_\bullet) = (J(\lambda^\bullet_\bullet), I(\lambda^\bullet_\bullet))$. 
This constructible map induces a bijective morphism
$\Exp_\gamma^{-1}(X^{[\jj \supset \ii]}) \to X^{[\jj \supset \ii]}$. 
The locus $\Exp_\gamma^{-1}(X^{[\jj \supset \ii]})$ is characterized
by requiring $\ord(J(\lambda)) = \jj$ and $\ord(I(\lambda)) = \ii$, which is equivalent to requiring that the syzygies of $\underline{\jj}$ and 
$\underline{\ii}$  lift to $J(\lambda)$ and $I(\lambda)$ respectively.
In fact, because by construction $I(\lambda) \supset M J(\lambda)$, we see that 
\[\ord(J(\lambda)) = \ord(J(\lambda) \setminus M J(\lambda)) \cup \ord(M J(\lambda)) \subset 
\jj \cup \ord(I(\lambda)).\]
Therefore it suffices to check that $\ord(I(\lambda)) = \ii$, or in other words 
we need only concern ourselves with the syzygies of $\ii$.  

Rather than continue a general treatment, we restrict to the case
when $\Gamma = \langle n, k \rangle$ in which the syzygies are easier
to describe. 

\begin{theorem} 
\label{thm:nesteddims}
Let $\jj = (\gamma_1,\ldots,\gamma_m) \subset \Gamma = \langle n, k \rangle$, and
$\jj \supset \ii \supset \jj + \mm$.  Let $\sigma_i$ be the degrees of the syzygies
of $\jj$.  Then
$X^{[\jj\supset\ii]}$ admits a bijective morphism from $\AA^{N(\jj \supset \ii)}$ where 
$$N(\jj \supset \ii)=\sum_{\gamma_i \notin \ii}
|\Gamma_{>\gamma_i}\setminus\jj| +\sum_{\gamma_i \in \ii} |\Gamma_{>\gamma_i}\setminus\ii|-
\sum_{i=1}^m|\Gamma_{>\sigma_i}\setminus \ii|.$$
\end{theorem}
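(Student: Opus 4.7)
The plan is to mirror the proof of Theorem~\ref{thm:unnesteddims}, replacing $\underline{\jj}$ by the pair $(\underline{\jj},\underline{\ii})$. The ambient parameter space $Gen$ set up immediately before the statement has dimension
\[ \sum_{\gamma_i\notin\ii}|\Gamma_{>\gamma_i}\setminus\jj|+\sum_{\gamma_i\in\ii}|\Gamma_{>\gamma_i}\setminus\ii|, \]
which accounts for the first two summands of $N(\jj\supset\ii)$. As the text observes, the containment $I(\lambda)\supset MJ(\lambda)$ is automatic, so the only condition to impose on $Gen$ is $\ord(I(\lambda))=\ii$; equivalently, the syzygies of $\underline{\ii}$ must lift to $I(\lambda)$.

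The key combinatorial input is a description of a generating set of $\underline{\ii}$ with $m$ elements (one per generator of $\underline{\jj}$), chosen so that the minimal syzygies are in natural bijection with the syzygies $\omega_i$ of $\underline{\jj}$ and carry exactly the same degrees $\sigma_1,\ldots,\sigma_m$. Concretely, for each $i$ we take $t^{\gamma_i}$ when $\gamma_i\in\ii$ and an appropriate generator of $M\underline{\jj}$ associated with $\gamma_i$ otherwise; the pictorial staircase argument in the $\langle n,k\rangle$ case shows this choice works. Using this generating set in the construction of Proposition~\ref{Wjj}, I would introduce $Syz$ with coordinates $\nu_{is}^{s-\sigma_i}$ for $c(\ii)>s>\sigma_i$, stratify $Gen\times Syz$ by the order of $\mathcal{G}_\lambda\circ\mathcal{S}_\nu\bmod t^{c(\ii)}$, and extract linear parts $L_i^r$ of the equations $Eq_i^r$ at each step. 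Assuming the active $L_i^r$'s are linearly independent, the map $X^{[\jj\supset\ii]}_{<r+1}\to X^{[\jj\supset\ii]}_{<r}$ is a trivial affine bundle of fiber dimension
\[ \#\{\gamma_j\notin\ii:r+\gamma_j\in\Gamma\setminus\jj\}+\#\{\gamma_j\in\ii:r+\gamma_j\in\Gamma\setminus\ii\}-\#\{i:r+\sigma_i\in\Gamma\setminus\ii\}, \]
which telescopes over $r$ to $N(\jj\supset\ii)$.

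The main obstacle is the linear independence of the active forms $L_i^r$, those with $r+\sigma_i\in\Gamma\setminus\ii$. I would reuse the pictorial argument of Theorem~\ref{thm:unnesteddims} with the infinite $\ii$-staircase in place of the $\jj$-staircase: the coefficient matrix decomposes into diagonal blocks indexed by connected runs of undeleted columns, and the same four-case analysis of run endpoints rules out the single degenerate block type, since otherwise the translate of the $\ii$-staircase by $r$ would lie entirely below itself, which is impossible for $r>0$. The only additional bookkeeping compared to the unnested case is that the role played by ``$\Gamma\setminus\jj$'' in the $\lambda$-coefficients is now played by ``$\Gamma\setminus\jj$ or $\Gamma\setminus\ii$'' depending on whether $\gamma_j\in\ii$, but this refinement does not affect the combinatorial structure of the block decomposition.
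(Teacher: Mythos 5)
There is a genuine gap at the combinatorial heart of your argument: the claimed $m$-element generating set of $\underline{\ii}$, one element per generator of $\underline{\jj}$, does not exist in general. If $\gamma_i \notin \ii$, then since $\ii \supset \jj + \mm$ the ideal $\ii$ must contain both $\gamma_i + n$ and $\gamma_i + k$ but not $\gamma_i$, and typically neither of these lies in the $\Gamma$-module generated by the other together with the remaining generators. For a minimal counterexample take $\Gamma = \langle 2,3\rangle$, $\jj = \Gamma$ (so $m=1$, $\gamma_1 = 0$) and $\ii = \mm = \{2,3,4,\ldots\}$: this $\ii$ requires the two generators $2$ and $3$, so no single "appropriate generator of $M\underline{\jj}$" can work. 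Consequently your assertion that the minimal syzygies of $\underline{\ii}$ are in bijection with the $\omega_i$ and carry exactly the degrees $\sigma_1,\ldots,\sigma_m$ is false, and the $m\times m$ shape of your syzygy matrix is wrong.

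The repair is to take the (not necessarily minimal) generating set of $\ii$ with $m+l$ elements, where $l = \#(\jj\setminus\ii)$: namely $\{\gamma_i + n,\ \gamma_i + k : \gamma_i \notin \ii\} \cup \{\gamma_i : \gamma_i \in \ii\}$. This set has $m+l$ syzygies, with degrees among $\sigma_1,\ldots,\sigma_m,\ \gamma_1+n+k,\ldots,\gamma_l+n+k$. The $l$ extra Koszul-type syzygies of degree $\gamma_i+n+k$ between the two generators $\phi_n f_i$ and $\phi_k f_i$ lift automatically, since $\phi_n(\phi_k f_i) - \phi_k(\phi_n f_i) = 0$; only the $m$ syzygies of degrees $\sigma_i$ impose conditions, and the matrix $\sS_\nu$ is $(m+l)\times m$. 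With this correction your dimension count of $Gen$, the linear forms $L_i^r$ (with $\delta_{\Gamma\setminus\jj}$ or $\delta_{\Gamma\setminus\ii}$ according to whether $\gamma_j\in\ii$), the block-diagonal independence argument, and the telescoping fiber dimensions all go through as you describe and yield $N(\jj\supset\ii)$. One further small correction: the impossibility argument in the independence step should compare the shifted $\jj$-staircase with the region below the $\ii$-path and above the $\Gamma$-path (if the shifted $\jj$-path lay entirely below the $\ii$-path one would get $\jj + r \subset \ii \subset \jj$, impossible for $r>0$), rather than comparing the shifted $\ii$-staircase with itself.
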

\begin{proof}
  We must determine the locus in $Gen$ in
  which the syzygies of $\ii$ lift.   Here we drop our numbering convention
  of the generators of $\jj$ and instead number them $\gamma_1,\ldots,\gamma_l,
  \gamma_{l+1},\ldots,\gamma_m$ so that $\jj \setminus \ii = \{\gamma_{1},\ldots,\gamma_l\}$. 
  Then the following is a not-necessarily-minimal set of generators for $\ii$:
  $\{\gamma_1+n, \gamma_1+k,\ldots,\gamma_l+n,\gamma_l+k\} \cup \{\gamma_{l+1},\ldots,\gamma_m\}$. 
  From this, for instance from the pictorial description, it is easy to see that
  the degrees of the minimal syzygies are among 
  $\sigma_1,\dots,\sigma_m,\gamma_1+n+k,\dots,\gamma_l+n+k$, where the $\sigma_i$ are the 
  syzygies of $\jj$.  Note that the syzygies of degree
  $\gamma_i + p + q$ between the generators of degree $\gamma_i + n$ and $\gamma_i + k$ 
  always lift to $I(\lambda)$.  Indeed, if $f_i$ is the generator of degree $\gamma_i$,
  then the generators of degrees $\gamma_i + n$ and $\gamma_i + k$ are just $\phi_n f_i$ and 
  $\phi_k f_i$,
  thus the syzygy in question is $\phi_n (\phi_k f_i) - \phi_k(\phi_n f_i) = 0$.
  
  We denote the generators of $\ii$ as follows:
  $$\epsilon_{2i-1}=\gamma_i+n,\quad\epsilon_{2i}=\gamma_i+k,\quad i=1,\dots,l,\quad \epsilon_{i+l}=
  \gamma_i \quad i=l+1,\dots,m.$$ 
  We have reduced the problem of determining when $\lambda \in X^{[\jj \supset \ii]}$ to
  determining when the syzygies of the form $(\vec{s}_i)_j=u^j_i t^{\sigma_i-\epsilon_j}$,
  $j=1,\dots,m+l$, $i=1,\dots,m$ lift.  
  These are generated by the syzygies in which only two entries $u^j_i$ are nonzero.

  As in the un-nested case, we introduce a subspace $Syz$ of all possible such
  syzygies so that $X^{[\jj \supset \ii]}$ will be cut out of $Gen \times Syz$ by explicit
  equations.  Thus fix, for each  $s \in \ii$, a splitting $s = \epsilon_{g(s)} + \rho(s)$
  with $\rho(s) \in \Gamma$.  
  Let $Syz$ be an affine space with coordinates 
  $\nu_{is}^{s-\sigma_i}$ where 
  $i=1,\dots,m$ and $c(\ii)>s>\sigma_i$. 
  To a point $\nu$ in $Syz$ we assign a $(m+l)\times m$ matrix with entries:

  $$(\mathcal{S}_\nu)_i^j=u_i^j\phi_{\sigma_i-\epsilon_j}+
  \sum_{\substack{c(\ii)>s>\sigma_i \\ g(s) = j}} 
  \nu_{is}^{s -\sigma_i}\phi_{s-\epsilon_j}.$$

  By an argument similar to that in the un-nested case, $X^{[\jj \supset \ii]}$ is cut 
  out of $Gen \times Syz$ by the equation 
  $\mathcal{G}_\lambda\circ \mathcal{S}_\nu=0 \mod t^{c(\ii)}$.  Denoting by $\iI$ 
  the ideal generated by the matrix entries of $\mathcal{G}_\lambda\circ \mathcal{S}_\nu$,
  we may again define the successive approximations 
  $X^{[\jj \supset \ii]}_{<r} \subset \CC[\lambda_{\bullet}^{<r}, \nu_{\bullet,\bullet}^{<r}]$,
  as the locus cut out by $\iI \cap \CC[\lambda_{\bullet}^{<r}, \nu_{\bullet,\bullet}^{<r}]$. 
  Then $X^{[\jj \supset \ii]}_{<r+1}$ is cut out of $
  X^{[\jj \supset \ii]}_{<r} \times \Spec\,\CC[\lambda_{\bullet}^{<r}, \nu_{\bullet,\bullet}^{<r}]$
  by the coefficients $Eq^r_i$ of $\phi_{r+\sigma_i}$ in the matrix entries of 
  $\mathcal{G}_\lambda\circ \mathcal{S}_\nu$.  The terms depending on 
  $\lambda^r_\bullet, \nu^r_{\bullet,\bullet}$ are linear, and we denote them by $L^r_i$:

\begin{equation}
Eq_i^r = L^r_i+\mbox{ terms in } \lambda^{<r},\nu^{<r},
\end{equation}
\begin{multline*}
L^r_i=\delta_{\ii\cap(\sigma_i,c(\jj))}(r+\sigma_i)\nu_{i,r+\sigma_i}^r +\sum_{s=1}^l (u_i^{2s-1}+u_i^{2s})\delta_{\Gamma\setminus\jj}(r+\gamma_s)\lambda^r_s+
\sum_{s=l+1}^m u_i^{l+s}\delta_{\Gamma\setminus \ii}(r+\gamma_s)\lambda^r_{s},
\end{multline*}
here $Eq_i^r=0$ if $r+\sigma_i\notin \Gamma$ or $r+\sigma_i\ge c(\ii)$.

  As in the proof of Theorem \ref{thm:unnesteddims}, from the shape of the forms $L^r_i$ we see
  that we only need to show that the linear forms:
  $$ L^r_i,\quad r+\sigma_i\in\Gamma\setminus\ii$$
  are linearly independent.  From the structure of the coefficients $u^i_j$ we see that 
  for $r+\sigma_i\in \Gamma\setminus\ii$ the linear form $L^r_i$ is equal to:
  $$\sum_{s=1}^l (u_i^{2s-1}+u_i^{2s})\delta_{\Gamma\setminus\jj}(r+\gamma_s)\lambda^r_s+
  \sum_{s=l+1}^m u_i^{l+s}\delta_{\Gamma\setminus \ii}(r+\gamma_s)\lambda^r_{s}.$$
  
   At this point, it is convenient to reorder the generators of \(\jj\) consecutively along the staircase, as we did in the unnested case.  Once we have done so, we see that the matrix of coefficients of the \(L_i^r\) is obtained as in the unnested case: We start with the same matrix \(A\), delete some columns (corresponding to those \(\sigma_i +r \not \in \Gamma \setminus \ii\)), and then set some coefficients to zero. The set of coefficients which is set to zero is  smaller than that in the unnested case, since the support of \(\delta_{\Gamma\setminus \ii}\) is larger than that of \(\delta_{\Gamma\setminus \ii}\). 
  
  Hence, to compute the rank of the space spanned by the linear
  forms $L^r_i$, $r+\sigma_i\in \Gamma\setminus\ii$, one has to analyze connected 
  components of the intersection of the periodic path associated to $\jj$ and shifted by a
  vector $v$ (as in Theorem \ref{thm:unnesteddims}) with the area under the path associated to $\ii$ and above the path associated to \(\Gamma\). The path $\jj+r$ cannot lie completely under the path for $\ii$: if it did, it is easy to see that we would have \(\jj+r \subset \jj\). For \(r>0\), we have already shown this is impossible. The remainder of the argument proceeds exactly as in the proof of  Theorem \ref{thm:unnesteddims}. 

  Thus we conclude that 
  $\pi_r: X^{[\jj \supset \ii]}_{<r+1}\to X^{[\jj\supset\ii]}_{<r}$ is a vector bundle
  with fibers of dimension 
  $ \# \lambda^r_{\bullet} + \# \nu^r_{\bullet,\bullet} - \# Eq^r_i$.  
  By construction
  \begin{eqnarray*}
  \# \lambda^r_{\bullet} & = & \sum_{i=1}^l\delta_{\Gamma\setminus\jj}(r+\gamma_i)+
    \sum_{i=l+1}^m\delta_{\Gamma\setminus\ii}(r+\gamma_i) \\
  \# \nu^r_{\bullet,\bullet} & = &  \sum_{i=1}^m \delta_{\ii\cap (0,c(\ii))}(r+\sigma_i)
  \end{eqnarray*}
  while on the other hand the nontrivial $Eq^r_i$ impose  
  $$   \# Eq^r_i  = \sum_{i=1}^m \delta_{\Gamma \cap (0,c(\ii))}(r+\sigma_i) $$
  independent conditions.  Summing over these terms and summing over $m$ gives the 
  stated dimension. 
\end{proof}

\section{Examples}
\label{sec:examples}

In this section we verify that the predictions of Conjecture~\ref{conj:homflyhomology} agree with previously known or conjectured values of the HOMFLY homology for torus knots. 
We first consider the ``stable'' HOMFLY homology, whose Poincar{\'e} polynomial is defined by 
$$\uspp (T(n,\infty)) = \left( \frac{a}{q}\right) ^{\mu-1} \lim_{k\to\infty} \uspp(T(n,k)). $$
This limit exists by a theorem of Stosic \cite{stosic}. It was conjectured in  \cite{DGR} that 
$$\uspp (T(n,\infty))  = \prod_{i=1}^{n} \frac{1+a^2 q^{2i-2} t^{2i-1}}
{1-q^{2i}t^{2i-2}}. $$
Proposition~\ref{prop:infty} from the introduction says that the analogous statement holds for \(\usppa\). 

\begin{proof}(Of Proposition~\ref{prop:infty})
The assertion $\usppa(X_{n,\infty}) = \usppa(X_{n,k}) + O(q^{2k})$
follows immediately from the fact that all ideals parameterized by
the spaces
$X_{n,\infty}^{[\ell \le \ell +m]}$ and 
$X_{n,k}^{[\ell \le \ell + m]}$
for $\ell < k$ contain the ideal $(x,y)^k$.  But modulo this 
ideal the equations $y^n = x^k$ and $y^n = 0$ are identical, so
for $\ell < k$ we have the equality 
$X_{n,\infty}^{[\ell \le \ell +m]}= X_{n,k}^{[\ell \le \ell + m]}$.

Recall that $X_{n,k}^{[\ell \le \ell + m]}$ is a union of affine
spaces corresponding to staircases with some marked external corners 
sitting in the semi-infinite strip of height $n$ in the first quadrant.  
There is a condition on which staircases occur, but any fixed staircase
will contribute for all sufficiently large $k$.  By inspection
of the formula in Theorem \ref{thm:unnesteddims}, the contributions
converge as well.  

Let $S = (n^{s_n}, (n-1)^{s_{n-1}}, \ldots, 1^{s_1})$ 
be a staircase with $s_i$ columns of height $i$.  For any subset
$\Sigma \subset \{1,\ldots,n\}$, we form $\Sigma'$ by subtracting 
$1$ from each element of $\Sigma$.  Write $S \cup \Sigma$ for the staircase
with additional columns with heights from $\Sigma$, and similarly 
$S \cup \Sigma'$. 
Every admissible nested pair of semigroup ideals
$\jj \supset \ii$ has staircase of the form $S \cup \Sigma \supset 
S \cup \Sigma'$ 
for some $S, \Sigma$: to recover $S$ from the staircases of $\jj$ and  $\ii$, 
delete every column in which those staircases differ.

It remains to sum the contributions of the staircases.  When $k+n^2$ is greater
than the number of boxes in the staircase, one calculates from
Theorem \ref{thm:unnesteddims} that 
\[\mathrm{Cont}_S = q^{2 \sum i s_i} t^{2 \sum (i-1)s_i} 
\prod_{i=1}^n (1+a^2 q^{2i-2} t^{2i-1})\]
Summing over staircases gives the stated formula. 
\end{proof}

 When combined
with the symmetry of \(\usppa\), Proposition~\ref{prop:infty} is enough to determine 
$\usppa(X_{2,n})$ and 
$\usppa(X_{3,n})$.

\begin{corollary} 
 \begin{align*}
\frac{1 - q^2}{1+a^2 t}
\left( \frac{q}{a}
\right)^{2k - 2} \usppa(X_{2,2k+1}) & =  
 \frac{1-t^{2k+1}q^{4k+2}}{1-t^2q^4} + a^2 q^2 t^3 
\frac{1-t^{2k-1}q^{4k-2}}{1-t^2 q^4}   \\
  \frac{1 - q^2}{1+a^2 t}
\left( \frac{q}{a}
\right)^{2k - 3}
 \uspp_{\mathrm{alg}}(X_{2,2k})\phantom{_{+1}} & =  
\frac{1+a^2 q^2 t^3}{1-q^4 t^2} - q^{2+4k} t^{2+2k} \frac{q^2+a^2 t}{1-q^4 t^2}
 \end{align*}
 
 \end{corollary}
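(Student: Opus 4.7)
The plan is to combine Propositions~\ref{prop:rands} and \ref{prop:infty} to pin down $\usppa(X_{2,n})$ uniquely, then match against the stated formulas.

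By Proposition~\ref{prop:rands}, the quantity $F_n := (q^{-1}-q)^b \usppa(X_{2,n})$ is a Laurent polynomial in $q$ with $q$-degrees in $[-2k,\,2k]$ (since $\delta = k$ in both cases) and is invariant under the involution $q \to 1/(qt)$; here $b=1$ for $n=2k+1$ and $b=2$ for $n=2k$.  By Proposition~\ref{prop:infty}, the Hilbert series $Z_n := (q/a)^{\mu-1}\usppa(X_{2,n}) = \sum_{\ell,m}q^{2\ell}a^{2m}t^{m^2}\wtp(X_{2,n}^{[\ell\le\ell+m]})$ agrees with
\[
Z_\infty \,=\, \frac{(1+a^2 t)(1+a^2 q^2 t^3)}{(1-q^2)(1-q^4 t^2)}
\]
modulo $q^{2n}$.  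Consequently $F_n$ agrees with $(q^{-1}-q)^b(a/q)^{\mu-1} Z_\infty$ up to a correction of $q$-order at least $2n-b-(\mu-1)$, and a direct degree count shows this threshold exceeds $2k$ in both cases; the remaining coefficients in $[-2k,2k]$ are then pinned down by the $q\to 1/(qt)$ symmetry, so $F_n$ is uniquely determined.

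To produce the explicit formulas, expand $(q^{-1}-q)^b(a/q)^{\mu-1} Z_\infty$ as a power series in $q$, truncate to $q$-degree $\le 2k$, and collapse the resulting finite geometric sum via $\sum_{j=0}^{N}(q^4 t^2)^j = (1-q^{4(N+1)}t^{2(N+1)})/(1-q^4 t^2)$.  Dividing by $(q^{-1}-q)^b$ and multiplying by $(1-q^2)(q/a)^{\mu-1}/(1+a^2 t)$ then yields the closed-form expressions stated in the corollary.  Equivalently, one can check directly that the stated RHS, after being multiplied by $(q^{-1}-q)^b(a/q)^{\mu-1}(1+a^2 t)/(1-q^2)$, is a Laurent polynomial in $q$ of degree $\le 2\delta$, is symmetric under $q\to 1/(qt)$, and matches $(q^{-1}-q)^b(a/q)^{\mu-1}Z_\infty$ to order $q^{2n}$; the uniqueness just established then forces equality.

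The main technical obstacle is bookkeeping: correctly tracking the $(a/q)^{\mu-1}$ shift (with $\mu=2k$ for $n=2k+1$ and $\mu=2k-1$ for $n=2k$), the truncation order, and the precise exponents of $q$ and $t$ appearing in the correction terms of the numerators.  The two-branch case $n=2k$ is a little more delicate because the extra factor of $(q^{-1}-q)$ alters the $q$-parity structure of the Laurent series, but the overall strategy is identical.
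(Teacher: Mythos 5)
Your argument is correct and is essentially the paper's own proof: the corollary is deduced by combining the low-order agreement supplied by Proposition~\ref{prop:infty} with the Laurent-polynomiality and $q\to 1/(qt)$ symmetry supplied by Proposition~\ref{prop:rands}, which together determine the series uniquely, and then observing that the stated right-hand sides satisfy the same three properties. (The only slip is that in the two-branch case $n=2k$ your truncation threshold equals $2k$ rather than exceeding it, but, as you note, the top coefficient is then recovered from the symmetry, so nothing is lost.)
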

 
 \begin{corollary}

\begin{eqnarray*}
\frac{1 - q^2}{1+a^2 t}
\left( \frac{q}{a}
\right)^{6k - 1}
\uspp_{\mathrm{alg}}(X_{3,3k+1}) & = &
\frac{(1+a^2 q^2 t^3)(1+a^2 q^4 t^5)}{(1-q^4 t^2)(1-q^6 t^4)} \\
& - & q^{2+6k} t^{2+4k} \frac{(q^2 + a^2 t)(1+a^2 q^2 t^3)(1+q^2 t^2 + q^4 t^2)}
{(1-q^6 t^2)(1-q^6 t^4)} \\
& + & q^{4+12k}t^{4+6k} \frac{(q^2+a^2t)(q^4+a^2 t)}{(1-q^6 t^2)(1-q^4 t^2)}
\end{eqnarray*}
\begin{eqnarray*}
\frac{1 - q^2}{1+a^2 t}
\left( \frac{q}{a}
\right)^{6k + 1}
\uspp_{\mathrm{alg}}(X_{3,3k+2}) & = &
\frac{(1+a^2 q^2 t^3)(1+a^2 q^4 t^5)}{(1-q^4 t^2)(1-q^6 t^4)} \\
& - & q^{4+6k} t^{4+4k} \frac{(q^2 + a^2 t)(1+a^2 q^2 t^3)(1+q^2 + q^4 t^2)}
{(1-q^6 t^2)(1-q^6 t^4)} \\
& + & q^{8+12k}t^{6+6k} \frac{(q^2+a^2t)(q^4+a^2 t)}{(1-q^6 t^2)(1-q^4 t^2)}
\end{eqnarray*}
\end{corollary}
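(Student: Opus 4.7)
The strategy is to show that both sides of each identity in the corollary share the same four characterizing properties, and that these properties pin down the common value uniquely. Write $F(q) = F(q;a,t)$ for either side of either equation. I claim both are (i) polynomials in $q$ of degree at most $2\mu$, where $\mu = 2(k'-1)$; (ii) polynomials in $a^2$ of degree at most $2$; (iii) fixed by the involution $F \mapsto q^{2\mu}t^{\mu}\,F|_{q\,\mapsto\,1/(qt)}$; and (iv) congruent modulo $q^{2k'}$ to the stable limit
\[
A(q) := \frac{(1+a^2q^2t^3)(1+a^2q^4t^5)}{(1-q^4t^2)(1-q^6t^4)}.
\]
Since both sides involve only even powers of $q$, the symmetry $f_{2\mu-j} = t^{j-\mu}f_j$ leaves the $\mu/2+1 = k'$ coefficients $f_0,f_2,\ldots,f_\mu$ free, and these are exactly what the congruence with $A$ modulo $q^{2k'}$ prescribes; hence $F$ is uniquely determined.

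For $R_{LHS}(q) := \tfrac{1-q^2}{1+a^2t}(q/a)^{\mu-1}\usppa(X_{3,k'})$, each property follows from an earlier result. Proposition~\ref{prop:rands} shows $(q^{-1}-q)\usppa$ is a Laurent polynomial of $q$-span at most $2\delta$ which is fixed by $q \mapsto 1/(qt)$; the multiplier $q\cdot (q/a)^{\mu-1}/(1+a^2t)$ then yields (i) and (iii). For property (ii), the Grassmannian identity $(q/a)^{\mu-1}\usppa = \sum_{\ell,r} q^{2\ell}\wtp(X^{[\ell]}_r)\prod_{j=1}^r(1+t^{2j-1}a^2)$ combined with unibranchness (so $r\ge 1$, hence $(1+a^2t)$ divides the sum) and the multiplicity bound $r\le 3$ leaves at most $a^0, a^2, a^4$. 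Property (iv) is Proposition~\ref{prop:infty}: multiplying $(q/a)^{\mu-1}\usppa(X_{3,\infty}) = \prod_{i=1}^3(1+a^2q^{2i-2}t^{2i-1})/(1-q^{2i}t^{2i-2})$ by $\tfrac{1-q^2}{1+a^2t}$ cancels the $i=1$ factor in both numerator and denominator, leaving exactly $A(q)$.

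For $R_{RHS}(q)$, written as $A - q^{2k'}t^{\beta}B + q^{4k'}t^{2k'+2}C$ with $B, C$ the indicated rational functions, properties (ii) and (iv) are immediate: $B$ and $C$ are formal power series in $q$ starting at $q^0$, so the subtracted terms first contribute at $q^{2k'}$. Property (iii) follows from the direct computations $A(1/(qt)) = q^4t^4C$, $B(1/(qt)) = q^4t^2B$, and $C(1/(qt)) = q^4A$, combined with the scalar prefactors. The main obstacle is (i): \emph{a priori} $R_{RHS}$ is a rational function with possible poles along $\{q^4t^2=1\}$, $\{q^6t^4=1\}$, and $\{q^6t^2=1\}$. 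At each pole locus exactly two of the three terms contribute polar parts; a useful simplification is that the scalar prefactors collapse on the locus — for instance, $q^{4k'}t^{2k'+2}|_{q^4t^2=1} = t^2$ regardless of $k$ or of the case. The residue cancellation at $\{q^4t^2=1\}$ then reduces to
\[
(1+a^2t^3)\left[\frac{1+a^2q^2t^3}{1-q^2t^2} + \frac{q^2+a^2t}{1-q^2}\right] \equiv 0 \pmod{1-q^4t^2},
\]
verified by clearing denominators and reducing modulo $q^4t^2-1$. The other two loci yield analogous short algebraic identities whose structure explains why the specific middle factor $1+q^2t^2+q^4t^2$ appears for $k'=3k+1$, while $1+q^2+q^4t^2$ appears for $k'=3k+2$. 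Once polynomiality is established, the degree bound $\le 2\mu$ follows from the $q\to\infty$ asymptotic, where the $C$-term dominates with exactly the correct leading order. The first paragraph's uniqueness then yields $R_{LHS} = R_{RHS}$, completing both formulas of the corollary.
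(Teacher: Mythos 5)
Your proof is correct and follows essentially the same route as the paper's: both sides are shown to agree modulo $q^{2\delta+2}$ via Proposition~\ref{prop:infty}, to be polynomials in $q$ of degree $4\delta$ satisfying the $q\mapsto 1/qt$ symmetry of Proposition~\ref{prop:rands}, and the conclusion follows by uniqueness. You merely make explicit what the paper leaves to ``inspection'' (the residue cancellations proving polynomiality of the right-hand side, and the verification of its symmetry via $A(1/qt)=q^4t^4C$, etc.); the only blemish is the inverted exponent in the symmetry relation, which should read $f_{2\mu-j}=t^{\mu-j}f_j$ and does not affect the argument.
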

\begin{proof}
  We recall that $\delta(X_{2,n}) = (n-1)/2$ and
  $\delta(X_{3,n}) = n-1$. Thus by Proposition \ref{prop:infty}, 
  the equality in each case holds modulo $q^{2\delta+2}$.  By inspection,
  the RHS is a polynomial in $q$ of degree $4 \delta$; Proposition 
  \ref{prop:rands} implies the same for the LHS.  
  It remains only to observe that the symmetry
  imposed by 
  Proposition \ref{prop:rands} on the LHS also holds on the right.  
\end{proof}

 In the 
$(2,n)$ case, these formulas match the known value of
$\uspp$ \cite{KhSoergl}; in the $(3,n)$ case, the resulting formula matches
a conjectural formula for $\uspp$ advanced in \cite{DGR}.

\section{Torus knots and Cherednik algebras} 
\label{sec:daha}

In this section, we make some conjectures about the structure of \(\usppa(X_{n,k})\). The main thrust of these conjectures is that the HOMFLY homology of torus knots should be related to the representation theory of rational Cherednik algebras. We will explore this idea more fully in a subsequent paper \cite{GORS}. Here, we  focus on the problem of writing explicit formulas for \(\usppa(X_{n,k})\).
 
For the moment,  we restrict attention to the non-nested Hilbert scheme series \(\usppa^{\rm min}\).
As we explained in section~\ref{sec:BPS},   \(\usppa^{\rm min}(X)\) admits an 
alternate description in terms of the
perverse filtration on the 
cohomology of the compactified Jacobian of a complete rational curve
with a unique singularity of type \(X\).
When \(X\) is unibranch of the form $y^n = x^k$, 
it can be shown using the techniques of Z. Yun's thesis 
that the cohomology of the compactified Jacobian 
furnishes the unique simple representation of  $\hH^{sph}_n(k/n)$, the 
spherical 
rational Cherednik algebra of type $A_{n-1}$ and central character 
$k/n$ \cite{Y, OY}\footnote{
Varagnolo and Vasserot have shown that the equivariant K-theory of this space
admits the action of the double affine Hecke algebra, and presumably
an application of their methods in Borel-Moore homology would yield 
a construction of its trigonometric degeneration.  We however have been unable
to use their approach to construct the rational 
Cherednik algebra representation geometrically.
}.  
It is moreover the case that the perverse
filtration on $\mathrm{H}^*(\overline{J})$ is compatible 
with the natural bigrading on $\hH^{sph}_n(k/n)$; i.e., it is a good
filtration in the sense of Gordon and Stafford \cite{GS, GS2}.  
An a-priori different good filtration on $\hH^{sph}_n(k/n)$ can be constructed 
by means of results  Calaque, Enriquez, and Etingof \cite{CEE} (see details below).  
We conjecture 
these filtrations have at least the same associated graded dimensions. 
When $n = mk+1$, a formula for the character of the filtration of \cite{CEE} 
is established by the work of Gordon and Stafford \cite{GS, GS2}, which 
gives a prediction for the lowest order coefficient of $a$ in the superpolynomial
of the $(n, mn+1)$ torus knot. 

\subsection{Filtrations on rational Cherednik algebras}

We recall some relevant definitions. The symmetric group $S_n$ 
acts on the free algebra 
$\CC\langle x_1,\ldots,x_n, y_1,\ldots,y_n\rangle$ by permuting
the variables. 

\begin{definition}
For any complex number $\xi$, the 
rational Cherednik algebra $\hH_n(\xi)$ is the quotient 
of the algebra $\CC[S_n]\ltimes \CC\langle x_1,\dots,x_n,y_1,\dots,y_n\rangle $ by the 
following relations:
\begin{gather*}
\sum_{i=1}^n x_i=0=\sum_{i=1}^n y_i, \quad [x_i,x_j]=0=[y_i,y_j],\\
[x_i,y_j]=\frac1n- \xi s_{ij},\quad i\ne j.
\end{gather*}
Using the symmetrizer \[e = \frac{1}{n!}\sum_{\sigma\in S_n} \sigma \]
we form the spherical rational Cherednik algebra $\hH^{sph}_n(\xi):=e \hH_n(\xi)e$.
\end{definition}

We fix the following grading on $\hH_n(\xi)$ and  $\hH^{sph}_n(\xi)$:
\[\deg(x_i)=1,\quad \deg(y_i)=-1,\quad \deg(\sigma)=0,\quad \sigma\in S_n.\]
The grading is equivalently given by the eigenvalue of the operator
$[h,\cdot]$, where $h=\sum_i x_iy_i$.

In studying the representation theory of $\hH_n(\xi)$,
it is natural to restrict to the category 
$\mathcal{O}$ of $\hH_n(\xi)$-modules which are locally nilpotent 
under the action of operators $y_i$ and decompose 
into a direct sum of finite dimensional 
generalized eigenspaces of $h$.  We recall from \cite{BEG1,BEG2,BEG3,GGOR} some
basic facts about this abelian category. 
The simple objects are enumerated by Young diagrams of size $n$.  Denoting by $L_\xi(\mu)$ the
representation corresponding to the Young diagram $\mu$, the subspace annihilated
by the $y_i$ furnishes the irreducible representation of $S_n$ corresponding to $\mu$. 
For \(\xi>0\),  $L_\xi(\mu)$ is finite dimensional if and only if $\mu$ corresponds to 
the trivial representation and 
$\xi=k/n$ with $k\in\ZZ$ and $(k,n)=1$.  We denote this module by $L_\xi$.
One may analogously define the category of $\oO^{sph}$ of $\hH_n^{sph}(\xi)$-modules; 
the map $M\mapsto eM$ is an equivalence between $\oO$ and $\oO^{sph}$ 
for all $\xi$, except rational $\xi$ from $(-1,0)$ with denominator smaller
or equal $n$ \cite{BE}. 

The element $h$ gives a $\ZZ$-grading to elements $M$ of $\oO$ or $\oO^{sph}$: 
$$M_i=\{m\in M| h(m)=i m\}.$$
\begin{definition} \cite{GS, GS2} Let $M$ be a module in 
$\oO$ (resp. $\oO^{sph}$). A filtration 
\[
 \Lambda:\quad \Lambda^{i,j}M\subset \Lambda^{i+1,j} M,\quad \Lambda^{i,j}M\subset \Lambda^{i,j+1}M
\]
is good if 
\begin{gather*}
  z\cdot M_{ij}\subset M_{i+k,j+l}, \quad \mbox{ for any } z \mbox{ with degree } k \mbox{ in the } x\mbox{'s and degree } l \mbox { in the } y \mbox{'s}, \\
  M_k=\oplus_{i-j=k} M_k \cap\Lambda^{i,j}M,\mbox{ and } M_k\cap \Lambda^{i,j}M=0 \mbox{ if } i-j<k,
\end{gather*}
and the corresponding associated graded module is a finitely generated 
$\mathbb{Z}\times\mathbb{Z}$-graded module over 
$\CC[S_n] \ltimes \CC[x_1,\ldots,x_n,y_1,\ldots,y_n]$ 
(resp. $\CC[x_1,\ldots,x_n,y_1,\ldots,y_n]^{{S}_n}$).
\end{definition} 

The procedure of \cite{OY} realizing $L_{k/n}$ as the cohomology of the
compactified Jacobian of a curve with singularity $X^n = Y^k$ matches
the {\em grading} on $L_{k/n}$ to the associated graded structure
of the perverse filtration on the Jacobian; the process of taking
associated graded destroys the homological grading on the Jacobian,
but leaves a (good) filtration on $L_{k/n}$.  Another,  {\it a-priori} different, good
filtration is given by the following construction of \cite{CEE}:

\begin{proposition} \label{prop:CEE}
The representation $eL_{k/n}$ of $\hH^{sph}_n(k/n)$ has
a good filtration.
\end{proposition}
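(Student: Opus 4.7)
The plan is to extract the good filtration on $eL_{k/n}$ from the realization of this module provided by the work of Calaque, Enriquez, and Etingof. In \cite{CEE}, the representation $L_{k/n}$ arises as a subquotient of a larger $\hH_n(k/n)$-module $\widetilde L$ coming from an $\widehat{\mathfrak{sl}}$-coinvariants construction (or equivalently a KZ-type $\mathcal{D}$-module setup); the key point for us is that the ambient module carries a natural increasing bifiltration $F^{i,j}$ coming from the order filtration on the underlying algebra of differential operators, indexed by bidegree in the $x$- and $y$-type generators. I would transport this filtration by defining $\Lambda^{i,j}(eL_{k/n})$ to be the image of $eF^{i,j}\widetilde L$ under the surjection $e\widetilde L \twoheadrightarrow eL_{k/n}$.

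With this definition in hand, the first two axioms of a good filtration are essentially formal. Multiplicativity of $\Lambda^{\bullet,\bullet}$ under the action of bidegree-$(k,l)$ elements of $\hH_n^{sph}(k/n)$ follows from the corresponding property of $F^{\bullet,\bullet}$, which holds by construction. Compatibility with the $h$-grading --- the decomposition $M_k = \bigoplus_{i-j=k} M_k\cap\Lambda^{i,j}$ and the vanishing $M_k\cap\Lambda^{i,j}=0$ for $i-j<k$ --- follows because the distinguished cyclic vector of $L_{k/n}$ is homogeneous of $h$-degree zero and because $x_i$, $y_i$ shift $h$ by $\pm 1$.

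The principal obstacle is the finite generation of $\mathrm{gr}_{\Lambda}(eL_{k/n})$ as a module over $\CC[x_1,\dots,x_n,y_1,\dots,y_n]^{S_n}$. Two routes suggest themselves. The first is to establish that $\mathrm{gr}_F(e\widetilde L)$ is already a coherent sheaf on $\Spec \CC[x,y]^{S_n}$; this should be inherent to the CEE picture, since the ambient object is (essentially) the sections of a coherent $\mathcal{D}$-module and the order filtration has coherent associated graded on the cotangent bundle. The second, cleaner if available, is to combine simplicity of $L_{k/n}$ with Noetherianness of $\hH_n^{sph}(k/n)$ to reduce finite generation of $\mathrm{gr}_{\Lambda}$ to the statement that $eL_{k/n}$ is generated over $\hH_n^{sph}(k/n)$ by a single element contained in some $\Lambda^{i_0,j_0}$. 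In either approach, the substantive work lies in opening up the CEE construction --- rather than treating it as a black box --- to confirm that its natural filtration is coherent in the required sense and is compatible with passing to spherical parts.
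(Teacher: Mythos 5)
There is a genuine gap here, and it is precisely at the point you yourself flag as ``the substantive work.'' The paper does not open up the CEE construction at all; it uses \cite{CEE} only through Theorem~9.8, which gives a vector space isomorphism $eL_{k/n}\cong eL_{n/k}$ intertwining the elements $\sum_i x_i^ay_i^b$ on both sides. Since these elements span the spherical subalgebras, this isomorphism \emph{transports} good filtrations across the swap $k/n\leftrightarrow n/k$. The second ingredient is the shift bimodule $P_{k/n}=e\hH_n(\frac{k}{n}+1)\delta e$ of \cite{BEG2}, which realizes $eL_{\frac{k}{n}+1}$ as $eL_{k/n}\otimes_{e\hH_n(k/n)e}P_{k/n}$ and hence transports a good filtration across $k/n\mapsto \frac{k}{n}+1$. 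Running the Euclidean algorithm with these two moves reduces everything to the one-dimensional module $L_1$ with its trivial filtration. No direct geometric construction of the filtration is needed, and none of the coherence questions you raise ever arise.

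Your route, by contrast, leaves the decisive step unproved twice over. First, the claim that the CEE realization carries a natural \emph{bi}filtration $F^{i,j}$ with coherent associated graded is asserted (``should be inherent to the CEE picture'') rather than established; the order filtration on differential operators is a single filtration, and producing a bifiltration compatible with separate $x$- and $y$-degrees, descending to the subquotient $L_{k/n}$, and having finitely generated $\gr$ is exactly the hard content you would need to supply. Second, your fallback argument does not work as stated: finite generation (indeed cyclicity) of $eL_{k/n}$ over the Noetherian algebra $\hH_n^{sph}(k/n)$ does \emph{not} imply that $\gr_\Lambda(eL_{k/n})$ is finitely generated for the filtration $\Lambda$ you defined by taking images of $eF^{i,j}\widetilde{L}$ --- an arbitrary exhaustive filtration on a cyclic module can have non-finitely-generated associated graded. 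That implication holds only if you redefine $\Lambda$ to be the filtration \emph{induced} by a cyclic generator, at which point you would have changed the filtration and would still owe the compatibility axioms with the $h$-grading. To repair the argument, either carry out the coherence analysis of the CEE $\mathcal{D}$-module honestly, or switch to the paper's inductive strategy.
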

\begin{proof}
First, let us notice that $e\hH_n(k/n)e-\hH_n(k/n)$ bimodule $e\hH_n(k/n)$ has a good filtration.
Since this bimodule provides a Morita equalence between $e\hH_n(k/n)e$ modules and $\hH_n(k/n)$-modules,
it is enough to construct a good filtration on the spherical modules.

According to \cite[Thm. 9.8]{CEE}, there is an isomorphism of vector spaces 
$eL_{k/n} \cong eL_{n/k}$. 
Moreover, this isomorphism identifies 
$\sum_{i=1}^k x_i^a y_i^b \in \hH^{sph}_k(n/k)$ and 
$\sum_{i=1}^n x_i^a y_i^b \in \hH^{sph}_n(k/n)$ for all positive $a,b$ as 
endomorphisms of this vector space. 
The spherical subalgebra is spanned by these elements hence 
this isomorphism carries good filtrations to good filtrations.

As explained in \cite{BEG2}:
$$ eL_{\frac{k}{n}+1}=eL_{k/n}\otimes_{e\hH_n(k/n)e} P_{k/n},\quad P_{k/n}:=e 
\hH_n(\mbox{$\frac{k}{n}$}+1)\delta e,$$
where $\delta:=\prod_{i<j} (x_i-x_j)$. In particular, it is shown in \cite{BEG2} that 
$P_{k/n}$ has the structure of an  $e\hH_n(k/n)e-e\hH_n(\frac{k}{n}+1)e$ bimodule. Thus if $eL_{k/n}$ has a good
filtration, the above construction induces a good filtration on $eL_{\frac{k}{n}+1}$.

Thus by the Euclidean algorithm, we may construct a good filtration on any $L_{k/n}$ 
starting from a good filtration on $L_1$.  This latter module is one dimensional,
so we may give it the trivial filtration: $\Lambda^{0,0}L_1=0,\Lambda^{>0,>0}=L_1$.  
\end{proof}

\begin{conjecture} \label{conj:filtrations}
 The perverse filtration and the filtration of Proposition \ref{prop:CEE} agree.
\end{conjecture}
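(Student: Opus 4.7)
The plan is to reduce the conjecture to a compatibility check at each step of the Euclidean algorithm used to build the CEE filtration in Proposition \ref{prop:CEE}. That construction produces the filtration on $eL_{k/n}$ inductively from the base case $L_1$ via two operations: the CEE vector space isomorphism $eL_{k/n}\cong eL_{n/k}$ and the shift bimodule $P_{k/n}$ carrying $e\hH_n(k/n)e$-modules to $e\hH_n(k/n+1)e$-modules. If I can show the perverse filtration is preserved by both operations and that the two filtrations agree in the base case, induction on the Euclidean algorithm will finish the proof.

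The base case is essentially trivial: $L_1$ is one-dimensional, as is the cohomology of the compactified Jacobian of a smooth genus-zero curve, and both filtrations degenerate to the grading. For the swap $k\leftrightarrow n$, the curves $y^n=x^k$ and $y^k=x^n$ are identified by exchanging coordinates, so their compactified Jacobians are tautologically identified; since we assume (following \cite{MY,MS}) that the perverse filtration is independent of the choice of smoothing family, this geometric identification is filtration-preserving. What remains is to show that the CEE vector space isomorphism of \cite[Thm.~9.8]{CEE} agrees with this geometric identification under the comparison $eL_{k/n}\cong \mathrm{H}^*(\overline{J})$ of \cite{OY}. The characterizing property of the CEE map -- that $\sum x_i^a y_i^b\in \hH^{sph}_n(k/n)$ and the analogous element in $\hH^{sph}_k(n/k)$ act by the same operator -- should correspond on the cohomological side to a statement about tautological classes, which is manifestly symmetric under the coordinate exchange.

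The main obstacle is compatibility with the shift functor $P_{k/n}$. Geometrically, this shift changes the singularity type from $y^n=x^k$ to $y^n=x^{k+n}$, and one must exhibit a bimodule structure on the product of the corresponding cohomologies whose action matches $P_{k/n}$ and whose filtration is induced by the perverse filtrations on the two factors. A natural candidate is to introduce a family of curves degenerating from the singularity $y^n=x^{k+n}$ to a reducible configuration containing $y^n=x^k$ together with an auxiliary branch, and to realize $P_{k/n}$ through a Hecke-type correspondence between the relative compactified Jacobians. Making this precise requires controlling the decomposition theorem across the degeneration and matching the resulting perverse sheaves with $P_{k/n}\otimes_{e\hH_n(k/n)e}(-)$ on the algebraic side; this is where the genuine difficulty lies.

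As a fallback strategy, one could first attempt the special case $k=mn+1$. There the associated graded of the CEE filtration is essentially computed by Gordon--Stafford \cite{GS,GS2}, yielding the formula in Conjecture \ref{conj:mnp1min}, while the perverse filtration is in principle computable from Theorem \ref{thm:dimensions} combined with Proposition \ref{prop:perverse}. Matching the two explicit expressions would settle the $k=mn+1$ case directly and provide a consistency check, but it also amounts to proving Conjecture \ref{conj:mnp1min}; a more conceptual argument along the inductive lines above is preferable for the general statement.
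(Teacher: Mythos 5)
This statement is a \emph{conjecture} in the paper: the authors offer no proof of it, and indeed they hedge even further in the surrounding text, conjecturing only that the two filtrations ``have at least the same associated graded dimensions.'' So there is no proof in the paper to compare yours against, and what you have written is a research outline rather than a proof. The outline is sensible --- inducting along the Euclidean algorithm used in Proposition \ref{prop:CEE} is the natural strategy, and you correctly locate the crux at the shift bimodule $P_{k/n}$ --- but every step that carries actual content is left open.

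Concretely: (i) the comparison isomorphism $eL_{k/n}\cong \mathrm{H}^*(\overline{J})$ you invoke is itself from \cite{OY}, which is unpublished, and that result only asserts that the perverse filtration is \emph{a} good filtration; since good filtrations are far from unique (the existence of two a priori different ones is the whole point of the conjecture), compatibility with the algebra action buys you nothing by itself. (ii) Your treatment of the $k\leftrightarrow n$ swap assumes that the CEE isomorphism of \cite[Thm.~9.8]{CEE} matches the tautological identification of the two Jacobians under \cite{OY}; the phrase ``should correspond \ldots to a statement about tautological classes'' is not an argument, and nothing in the paper supplies one. (iii) Most seriously, the inductive step requires that the perverse filtration on $\mathrm{H}^*(\overline{J}(X_{k+n,n}))$ --- which is defined via the decomposition theorem for a smoothing family of that curve --- coincide with the filtration induced algebraically by $-\otimes_{e\hH_n(k/n)e}P_{k/n}$ from the perverse filtration one step back. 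These two filtrations come from entirely unrelated constructions, so the induction does not reduce the difficulty: it merely relocates the full content of the conjecture into each inductive step. No geometric realization of $P_{k/n}$ as a correspondence on compactified Jacobians is constructed, and proposing that one ``introduce a family of curves degenerating\ldots'' names the problem rather than solving it. Your fallback ($k=mn+1$, comparing Gordon--Stafford with Theorem \ref{thm:dimensions}) is a genuine consistency check but, as you note, amounts to proving Conjecture \ref{conj:mnp1min}, which is also open in the paper.
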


\begin{remark}
  Let $\overline{\hH}_n(k/n)$ be the quotient of 
  $\CC[S_n]\ltimes \CC[\overline{x}_1,\dots,\overline{x}_n,\overline{y}_1,\dots,\overline{y}_n]$
  by the relations
  \begin{gather*}
  [\overline{x}_i,\overline{x}_j]=[\overline{y}_i,\overline{y}_j]=0 \\
  [\overline{y}_i,\overline{x}_j]=k s_{ij},\quad i\ne j,\\
  [\overline{y}_i,\overline{x}_i]=1/(n-1)-k\sum_{j\ne i} s_{ij}.
  \end{gather*}
  Let $\overline{X}:=\sum_i \overline{x}_i$ and $\overline{Y}:=\sum_i \overline{y}_i$.  Then
  $\CC[\overline{Y},\overline{X}]$ is isomorphic to the algebra of 
  differential operators on $\CC$, 
  and $\hH_n(k/n)$ is embedded via:
  $$ x_i\mapsto \overline{x}_i-\overline{X}/n,\quad y_i\mapsto \overline{y}_i-\overline{Y}/n.$$
  Moreover, $\hH_n(k/n)$ commutes with $\overline{X}$ and $ \overline{Y}$, so 
  $\overline{\hH}_n(k/n)=\hH_n(k/n)
  \times \CC[\overline{Y},\overline{X}]$.
  Given an $\hH_n(k/n)$ module $M$ we can produce an $\overline{\hH}_n(k/n)$ module just by
  tensoring over $\CC$ with $\CC[\overline{X}]$.   It is natural
  to expect that an action of $\overline{\hH}_n(k/n)$ may be constructed on the Hilbert
  scheme homologies $\oplus_{i,n} \mathrm{H}^i(X^{[n]})$ as an algebra of correspondences,
  compatibly with the action of $\hH_n(k/n)$ on the cohomology of the compactified Jacobian.
\end{remark}

\subsection{Results of Haiman \cite{haiman2} and Gordon and Stafford \cite{GS,GS2}}

Let
$\mathrm{Hilb}(n)$ be the locus in the Hilbert scheme of $n$ points
in $\CC^2$ with center of mass at the origin.  Note that
$\CC^2 \times \mathrm{Hilb}(n)$ is the usual Hilbert scheme of $n$ points
in $\CC^2$.  There is a functor:

$$ \Phi: e\hH_n (\xi) e\mbox{-module } M\mbox{ with a good filtration } 
\Lambda\to S_{\Lambda,M}\in 
\mathrm{Coh}(\mathrm{Hilb}(n)),$$
The sheaf $S_{\Lambda,M}$ may depend
on the filtration, though its support does not.  

The functors 
$\Phi$ interact well with the shift functors. Let $\tT$ be  the tautological 
rank $n$ sheaf on $\mathrm{Hilb}(n)$ whose fiber at a given point is
the structure sheaf of the corresponding subscheme. Then
\begin{equation}
\Phi (M \otimes_{e\hH_n(k/n)e} e \hH_n(\mbox{$\frac{k}{n}+1$})\delta e) = 
\Phi(M) \otimes \mathrm{det} \tT
\end{equation}
In addition, if $L_{1/n}$ is
equipped with the trivial filtration, and $\zZ \subset \mathrm{Hilb}(n)$
is the subvariety parameterizing schemes supported at the origin, then
\begin{equation}
  \Phi(L_{1/n}) = \oO_\zZ 
\end{equation}
Combining  with
vanishing results of Haiman gives:  

\begin{theorem}\cite{GS2} \label{thm:GandS} Consider the module
$eL_{m + \frac{1}{n}}$ of $e\hH_n(m+\frac{1}{n})e$ equipped with the filtration of 
Proposition \ref{prop:CEE}.  Then
\[ \Phi(eL_{m+\frac{1}{n}})= \oO_\zZ \otimes (\mathrm{det}\, \tT)^{\otimes m}  \]
Moreover, if $T_1, T_2$ are the equivariant characters of $(\CC^*)^2$,
$$\sum_{i,j} T_1^iT_2^j \dim gr_\Lambda^{i,j} eL_{m+\frac{1}{n}} =
\chi_{\CC^*\times\CC^*} (\oO_\zZ \otimes (\mathrm{det}\, \tT)^{\otimes m} ) $$
\end{theorem}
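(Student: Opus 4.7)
The plan is a straightforward induction on $m$, using the construction of the filtration in Proposition~\ref{prop:CEE} as the bookkeeping device. The base case is $m=0$: the module $eL_{1/n}$ comes equipped with the trivial filtration coming from the one-dimensional module $L_{1/n}$, and the hypothesis $\Phi(L_{1/n}) = \oO_\zZ$ tensored with $(\det\tT)^{\otimes 0} = \oO_{\mathrm{Hilb}(n)}$ is tautological. For the inductive step, recall that the proof of Proposition~\ref{prop:CEE} constructs the good filtration on $eL_{m+\frac{1}{n}}$ precisely by taking the previously constructed filtration on $eL_{(m-1)+\frac{1}{n}}$ and tensoring with the filtered bimodule $P_{(m-1)+\frac{1}{n}} = e\hH_n\bigl(m+\tfrac{1}{n}\bigr)\delta e$. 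Applying $\Phi$ and invoking the compatibility
\[
\Phi\bigl(M \otimes_{e\hH_n(\xi)e} e\hH_n(\xi+1)\delta e\bigr) = \Phi(M) \otimes \det\tT
\]
advances the target sheaf by one twist by $\det\tT$, so after $m$ steps we arrive at $\oO_\zZ \otimes (\det\tT)^{\otimes m}$, as claimed.

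The main obstacle — and the reason to cite Haiman \cite{haiman2} and Gordon--Stafford \cite{GS,GS2} — is that the displayed compatibility of $\Phi$ with the shift functor is only an equality at the level of sheaves (rather than derived categories) when certain higher Tor/Ext groups vanish. Concretely, one must know that $\oO_\zZ \otimes (\det\tT)^{\otimes j}$ has no higher cohomology in the relevant range and that tensoring with $P_{k/n}$ is exact on the subcategory of filtered modules appearing in the induction. This is exactly the content of Haiman's vanishing results for the isospectral Hilbert scheme and the polygraph, repackaged in \cite{GS,GS2} as a statement about the functor $\Phi$. Granting these inputs, the induction proceeds without further obstruction.

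Finally, the character formula follows formally. The torus $(\CC^*)^2$ acts on $\CC^2$ and hence on $\mathrm{Hilb}(n)$, on $\zZ$, and on $\tT$, and $\Phi$ is equivariant for this action once one identifies the $(\ZZ\times\ZZ)$-grading on $\gr_\Lambda eL_{m+\frac{1}{n}}$ (coming from its structure as a module over $\CC[x_1,\ldots,x_n,y_1,\ldots,y_n]^{S_n}$) with the grading by characters of $(\CC^*)^2$. Taking the bigraded Hilbert series of $\gr_\Lambda eL_{m+\frac{1}{n}}$ therefore reproduces the equivariant Euler characteristic $\chi_{\CC^*\times\CC^*}\bigl(\oO_\zZ \otimes (\det\tT)^{\otimes m}\bigr)$; since the sheaf is supported on the proper subscheme $\zZ$, the Euler characteristic is a genuine polynomial in $T_1,T_2$ (up to the usual normalizations), and the equality is an equality of power series.
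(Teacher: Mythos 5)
Your proposal is correct and follows the same route the paper takes: the paper itself only records the two properties of $\Phi$ (the base case $\Phi(L_{1/n})=\oO_\zZ$ for the trivial filtration and the compatibility with the shift bimodule $e\hH_n(\xi+1)\delta e$) and then invokes Haiman's vanishing theorems, exactly as in your induction on $m$; the theorem is in any case attributed to Gordon--Stafford \cite{GS2}, where the details (including the exactness/vanishing inputs you flag and the equivariance giving the character identity) are carried out.
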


The equivariant character is computed  \cite{haiman2} by localization in 
equivariant K-theory. 
As the calculation is in any case restricted to $\zZ$ we may compute 
on the usual Hilbert scheme $(\CC^2)^{[n]}$ rather than $\Hilb(n)$.  The
fixed points are enumerated by partitions $\lambda$ of $n$. 
For a box $x$ in the diagram of a partition $\lambda$, we
use the standard notations $a(x),l(x)$ for its arm and leg, and 
$a',l'$ for the co-arm and co-leg.  We write $\lambda'$ for the dual 
partition to $\lambda$, and 
$\kappa(\lambda):= \sum_{x \in \lambda} l'(x) = \sum {\lambda'_i\choose 2}$.
 The equivariant weights are as follows:
\begin{eqnarray} \label{eq:weights}
 \tT_\lambda & = & \sum_{x \in \lambda} T_1^{l'(x)} T_2^{a'(x)} \\
 \det \, \tT_\lambda & = & T_1^{\kappa(\lambda)} T_2^{\kappa(\lambda')}  \\
 T(\CC^2)^{[n]}_\lambda & = & \prod_{x\in\lambda}
  (1-T_1^{1+l(x)}T_2^{-a(x)})(1-T_1^{-l(x)}T_2^{1+a(x)})  \\
 \oO_\zZ|_\lambda & = & \tT_\lambda (1-T_1)(1-T_2) \prod_{x\in\lambda\setminus \{(0,0)\}}
  (1-T_1^{l'(x)}T_2^{a'(x)})
\end{eqnarray}
and so in all $\chi_{\CC^*\times\CC^*} (\oO_\zZ \otimes (\mathrm{det}\, \tT)^{\otimes m} )$
is given by the following formula:
\begin{equation} \label{eq:nnmp1}
\sum_{\lambda \vdash n} T_1^{m\kappa(\lambda)} T_2^{m\kappa(\lambda')} 
\frac{(1-T_1)(1-T_2) \prod_{x\in\lambda\setminus \{(0,0)\}}
  (1-T_1^{l'(x)}T_2^{a'(x)})}{\prod_{x\in\lambda }
  (1-T_1^{1+l(x)}T_2^{-a(x)})(1-T_1^{-l(x)}T_2^{1+a(x)})} \sum_{x \in \lambda} T_1^{l'(x)} T_2^{a'(x)}
\end{equation}

\subsection{Formulas for torus knots}

The precise relation between the filtration in the construction of \cite{OY} 
and the filtration of \cite{GS,GS2} is such that the equivariant variables
$T_1, T_2$ are related to the perverse and homological $q, t$ as follows:
\begin{equation} 
T_1:=q^2,\quad T_2:=1/(q^2t^2).
\end{equation}
Combining Proposition \ref{prop:perverse} and Conjecture \ref{conj:filtrations} 
we expect: 

\begin{conjecture}
  $(at)^{-\mu} \sppa^{\mathrm{min}} (X_{k,n})  = 
   \sum_{i,j} T_1^iT_2^j \dim gr_\Lambda^{i,j} eL_{k/n}$
\end{conjecture}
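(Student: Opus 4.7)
My plan is to assemble the identity as a composition of the inputs developed earlier in the paper, isolating exactly one remaining unresolved step. First, Proposition \ref{prop:perverse} rewrites the left-hand side as
\[
\sppa^{\min}(X_{k,n}) = \sum_{i=-g}^{g} q^{2i}\, \mathfrak{w}\bigl({}^p H^i(\overline{J}^0(C))\bigr),
\]
where $C$ is any rational curve with unique singularity of type $X_{k,n}$ and $g=\delta$. The overall $(at)^{-\mu}$ prefactor should be absorbed into uniform shifts of the perverse and weight gradings, centering the double sum at the socle of $L_{k/n}$; pinning down the precise shift is a bookkeeping exercise based on matching Serre/Poincar\'e duality on both sides.

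Second, I would invoke the geometric realization from \cite{OY,Y} alluded to in the section preamble: the cohomology $H^*(\overline{J}^0(C))$ is isomorphic to $eL_{k/n}$ as a representation of $\hH^{\text{sph}}_n(k/n)$, with the perverse grading matching the $h$-eigenspace decomposition (up to a shift by $g$) and the weight filtration supplying the second grading needed to upgrade this to a $\ZZ\times\ZZ$-bigrading. Under the dictionary $T_1=q^2$, $T_2=1/(q^2 t^2)$, the monomial $T_1^iT_2^j$ becomes $q^{2(i-j)}t^{-2j}$, so $i-j$ encodes perverse degree while $-2j$ encodes the weight; the grading conventions of \cite{OY} and \cite{GS,GS2} should line up verbatim once one fixes the normalization.

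Third, and crucially, I would invoke Conjecture \ref{conj:filtrations} to identify the perverse filtration on $H^*(\overline{J})$ with the filtration $\Lambda$ on $eL_{k/n}$ constructed in Proposition \ref{prop:CEE}. Granted this identification, the bigraded character on the right of the conjecture is precisely the double sum produced in step one, so the equality follows by direct comparison.

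The main obstacle is Conjecture \ref{conj:filtrations} itself: the two filtrations arise by wholly different mechanisms. The perverse filtration is produced geometrically via the BBD decomposition applied to a smoothing $\cC\to B$ of $C$, whereas $\Lambda$ is built algebraically by iterating the shift bimodule $P_{k/n}$ starting from the trivial filtration on $L_1$. A strategy to attack this would be to establish the identity first in the case $k=mn+1$, where Theorem \ref{thm:GandS} furnishes an explicit closed form on the Cherednik side that can be compared with a direct computation of the perverse filtration on the compactified Jacobian (for instance using Theorem \ref{thm:unnesteddims}); then try to propagate the comparison across $k\mapsto k+n$ using a geometric incarnation of tensoring with $P_{k/n}$, perhaps via a correspondence relating the Jacobians of $X_{k,n}$ and $X_{k+n,n}$. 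Short of constructing such a correspondence, the conjecture as stated is essentially a reformulation of Conjecture \ref{conj:filtrations} tailored to the torus knot case.
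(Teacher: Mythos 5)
Your proposal reproduces exactly the paper's own reasoning: the statement is presented there as a conjecture obtained by combining Proposition \ref{prop:perverse} with the identification of $\mathrm{H}^*(\overline{J}^0(C))$ with $eL_{k/n}$ from \cite{OY,Y} and then invoking Conjecture \ref{conj:filtrations} to match the perverse filtration with the filtration of Proposition \ref{prop:CEE}. You correctly identify that the only genuine gap is Conjecture \ref{conj:filtrations} itself, which is precisely the status the paper assigns to it.
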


From the above results of Haiman \cite{haiman2} 
and Gordon and Stafford \cite{GS,GS2}, we expect more explicitly
that $(at)^{-\mu} \sppa^{\mathrm{min}} (X_{mn+1,n})$ is given
by the formula \ref{eq:nnmp1}. 
For general $k/n$, one might hope that the sheaf $F_{k/n}$ constructed 
by Gordon and Stafford \cite{GS} is
 $\CC^* \times \CC^*$-equivariant and that moreover we again have 
$$\sum_{i,j} T_1^iT_2^j\dim gr\Lambda^{i,j}(L_{k/n})=\chi_{\CC^*\times\CC^*}(
F_{k/n})$$

This conjecture can be extended to describe the entire polynomial 
\(\sppa (X_{n,k})\).  Recall that 
$\tT$ splits as  $\tT' \bigoplus \oO_{\Hilb}$.  We write $A := a^2 t$ and
\[\Lambda^* A \tT'^*:= \sum_{i=0}^{n-1} A^i \Lambda^i \tT'^*.\]

\begin{conjecture} \label{conj:allrows}
  For any coprime $(k,n)$, there exists a $\CC^* \times \CC^*$-equivariant
  sheaf $F_{k/n}$ such that 
  $(at)^{-\mu} \sppa(X_{k,n}) = 
  \chi_{\CC^* \times \CC^*}(F_{k/n} \otimes \Lambda^* A \tT'^*)$.
\end{conjecture}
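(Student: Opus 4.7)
The plan is to produce $F_{k/n}$ by lifting the Gordon-Stafford functor from the spherical subalgebra to the full rational Cherednik algebra, and then to identify the $A$-expansion in $\Lambda^* A \tT'^*$ with the $a^{2m}$ contribution coming from the Grassmannian fibers of the forgetful maps $X^{[l\le l+m]} \to X^{[l]}$.

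First, I would exploit the Morita equivalence between $\hH_n(k/n)$ and $e\hH_n(k/n)e$ to lift the good bifiltration on $eL_{k/n}$ from Proposition~\ref{prop:CEE} to a good, $S_n$-compatible bifiltration on the full module $L_{k/n}$. An $S_n$-equivariant extension of the Gordon-Stafford functor $\Phi$ should then produce a $\CC^*\times\CC^*\times S_n$-equivariant coherent sheaf $\widetilde{F}_{k/n}$ on $\Hilb(n)$, supported on the punctual locus $\zZ$. I would define $F_{k/n}$ to be its $S_n$-invariant part. By construction $F_{k/n}$ agrees with the Gordon-Stafford sheaf attached to $eL_{k/n}$, so the $A^0$-coefficient of the proposed formula reduces to the minimum-$a$ conjecture and is provided by Conjecture~\ref{conj:filtrations} and Theorem~\ref{thm:GandS}.

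Second, I would use Haiman's theorem that $\tT$ is the Procesi bundle: on each fiber of $\Hilb(n)$ it realizes the regular representation of $S_n$, and therefore Schur-Weyl duality identifies the sign-isotypic components of the symmetric and tensor powers of $\tT$ with determinantal twists of $\Lambda^m \tT'^*$. Tensoring $F_{k/n}$ against the formal series $\Lambda^* A \tT'^* = \sum_{m=0}^{n-1} A^m \Lambda^m \tT'^*$ should then reassemble the $S_n$-isotypic decomposition of $\widetilde{F}_{k/n}$ into a generating series in $A = a^2 t$. On the geometric side, the rewriting
\[
\sum_{l,m} q^{2l} a^{2m} t^{m^2} \wtp(X^{[l\le l+m]}) =
\sum_{l,r} q^{2l} \wtp(X^{[l]}_r) \prod_{j=1}^r (1+t^{2j-1} a^2)
\]
already exhibits the $a^{2m}$-part as a Grassmannian average over the multiplicity strata $X^{[l]}_r$. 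Under the dictionary $T_1 = q^2$, $T_2 = 1/(q^2t^2)$, $A = a^2t$, the $q$-binomial expansion of the right-hand side should match the wedge generating series; the extra factor $t^{m(m-1)}$ needed to convert the $t^m$ of $A^m$ into the $t^{m^2}$ on the left arises from the sum of the $T_2$-weights of the occupied boxes in the sign representation, after localization at the torus fixed points.

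The main obstacle is the first step. No geometric construction is presently known which realizes the full module $L_{k/n}$, as opposed to just $eL_{k/n}$, as the cohomology of a compactified Jacobian or a parabolic enlargement thereof with compatible perverse, weight, and $S_n$ gradings. Producing such a parabolic model (the natural candidate being a Hitchin-type fibration whose generic fiber is a Jacobian with level structure giving rise to the $S_n$-action), and then verifying that its perverse filtration agrees with the Gordon-Stafford filtration on every $S_n$-isotypic component rather than just on the $S_n$-invariant one, is the essential new input; this is precisely what the companion article \cite{GORS} is expected to supply.
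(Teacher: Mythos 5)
The statement you are addressing is Conjecture \ref{conj:allrows}; the paper offers no proof of it, only the remark that in \cite{GORS} it will be interpreted as an assertion about hyperspherical rational Cherednik algebras, together with the consistency check $F_{k/n}\otimes\det\tT = F_{(k+n)/n}$ and the explicit candidate $F_{(mn+1)/n} = \oO_\zZ\otimes(\det\tT)^{\otimes m}$ coming from Theorem \ref{thm:GandS}. Your outline is a reasonable research program and is broadly aligned with the direction the authors themselves indicate: decomposing a sheaf built from the full module $L_{k/n}$ into $S_n$-isotypic pieces and matching the wedge powers $\Lambda^m\tT'^*$ to the higher $a$-gradings is essentially the ``hyperspherical'' picture. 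But it is not a proof, and you say so yourself: your final paragraph concedes that the essential input --- a geometric model carrying the full $S_n$-equivariant module $L_{k/n}$ with compatible perverse, weight, and $S_n$ structures, together with a comparison of filtrations on every isotypic component --- does not exist. That is precisely the open content of the conjecture, so the argument is circular at its core: you are assuming the main point in order to derive it.

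Two further cautions on the parts you treat as routine. First, even granting Conjecture \ref{conj:filtrations}, the identification of the $A^0$ coefficient with $\sppa^{\min}$ only handles the bottom row; the claim that the $q$-binomial expansion over the multiplicity strata $X^{[l]}_r$ matches, term by term in $A$, the localized weights $\prod_{x\neq(0,0)}(1+AT_1^{-l'(x)}T_2^{-a'(x)})$ of $\Lambda^* A\tT'^*$ is itself a nontrivial combinatorial identity that you assert rather than verify --- it is of the same order of difficulty as Conjecture \ref{conj:mnp1}, which the paper also leaves open. Second, the Procesi-bundle/Schur--Weyl step needs care: the bundle whose fibers carry the regular representation is Haiman's Procesi bundle, not the tautological bundle $\tT$ (which has rank $n$, not $n!$), so the mechanism by which the sign-isotypic components of powers of the relevant bundle produce $\Lambda^m\tT'^*$ needs to be spelled out, not invoked. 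In short: the proposal is a sensible sketch of why the conjecture is plausible and how \cite{GORS} might approach it, but it does not close any of the gaps that make the statement a conjecture.
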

\begin{remark}
  In \cite{GORS} it will be clarified that Conjecture \ref{conj:allrows}
  amounts to an assertion
  that the coefficients of $a$ in $\mathcal{P}_{alg}(X_{n,k})$ correspond to 
  representations of hyperspherical rational Cherednik algebras.  Note the assertion
  above is consistent with $F_{k/n} \otimes \det \tT = F_{(k+n)/n}$. 
\end{remark}

The weights of $\Lambda^* A \tT'^*$ are:
\begin{gather*}
(\Lambda^* A \tT'^*)_\lambda \,\, = \!\!\!
\prod_{x\in\lambda \setminus \{(0,0)\}} \!\!\! (1+A T_1^{-l'(x)}T_2^{-a'(x)}).
\end{gather*}

Thus when $k = mn+1$ we obtain Conjecture \ref{conj:mnp1} from
the introduction. For general $(k,n)$ we lack an 
explicit description of the 
sheaf \(F_{k,n}\), but  computer experiments with the
combinatorial formula for the Poincare polynomial of the Hilbert 
scheme suggest:
 
\begin{conjecture} 
There exist $g_{r/n}(\lambda)$ such that 
\[ (at)^{-\mu} \sppa(X_{mn+r,n}) = 
\sum_{|\lambda|=n} \frac{g_{r/n}(\lambda)}{T(\CC^2)^{[n]}_\lambda}
 \cdot (1-T_1)(1-T_2) \cdot (\Lambda^* \tT')_\lambda  (\Lambda^* A\tT'^*)_\lambda
(\det \tT)_\lambda^m
\]
with the following properties:
\begin{gather}
\delta_n\cdot g_{r/n}(\lambda)\in\ZZ[T_1^{\pm1},T_2^{\pm1}],\\
\label{eq:sym} g_{r/n}(\lambda')=g_{r/n}(\lambda)|_{T_1=T_2,T_2=T_1}, \\
g_{r/n}(\lambda)=g_{(n-r)/n}(\lambda)|_{T_1=1/T_1,T_2=1/T_2} T_1^{\kappa(\lambda)} T_2^{\kappa(\lambda')}\\
g_{1/n}(\lambda)=\sum_{x\in \lambda} T_1^{l(x)} T_2^{a(x)},
\end{gather}
where $\delta_n=\prod_{0<i,j<n}(1-T_1^iT_2^j)$.
\end{conjecture}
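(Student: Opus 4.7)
The plan is to interpret $g_{r/n}(\lambda)$ as the equivariant character at the torus-fixed point $I_\lambda \in (\CC^2)^{[n]}$ of the conjectural sheaf $F_{k/n}$ from Conjecture~\ref{conj:allrows}, with $k = mn + r$. Granting that $F_{k/n}$ exists as a $\CC^* \times \CC^*$-equivariant coherent sheaf set-theoretically supported on the punctual locus $\zZ$, Atiyah--Bott localization gives
\[
\chi_{\CC^* \times \CC^*}\bigl(F_{k/n} \otimes \Lambda^* A \tT'^* \otimes (\det \tT)^m\bigr) = \sum_{|\lambda| = n} \frac{(F_{k/n})_\lambda \cdot (\Lambda^* A \tT'^*)_\lambda \cdot (\det \tT)^m_\lambda}{T(\CC^2)^{[n]}_\lambda}.
\]
Matching against the form in the statement and using the character $(\oO_\zZ)_\lambda$ from \eqref{eq:weights}, one is forced to set
\[
g_{r/n}(\lambda) := \frac{(F_{k/n})_\lambda \cdot \tT_\lambda}{(1 - T_1)(1 - T_2) \, (\Lambda^* \tT')_\lambda}.
\]
For $r = 1$, Theorem~\ref{thm:GandS} identifies $F_{1/n}$ with $\oO_\zZ$; direct substitution then reproduces the stated base case, and simultaneously recovers Conjecture~\ref{conj:mnp1}.

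First I would verify the two symmetries on the right-hand side. The $T_1 \leftrightarrow T_2$ symmetry \eqref{eq:sym} comes from the identity of links $T(k, n) = T(n, k)$ combined with the involution of $(\CC^2)^{[n]}$ induced by swapping coordinates of $\CC^2$: this involution exchanges $T_1, T_2$ and permutes the fixed points via $\lambda \mapsto \lambda'$. The $r \leftrightarrow n - r$ duality should descend from the $q \mapsto 1/(qt)$ self-duality of $\sppa$ proven in Proposition~\ref{prop:rands}. In the Cherednik variables $T_1 = q^2$ and $T_2 = 1/(q^2 t^2)$ this substitution is $(T_1, T_2) \mapsto (1/T_1, 1/T_2)$, and the extra twist by $T_1^{\kappa(\lambda)} T_2^{\kappa(\lambda')}$ arises because at the level of sheaves the duality relates $F_{k/n}$ to $F_{(n-k)/n} \otimes \det \tT$, consistent with the remark following Conjecture~\ref{conj:allrows}, and $\det \tT$ localizes to $T_1^{\kappa(\lambda)} T_2^{\kappa(\lambda')}$ at $I_\lambda$.

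Next I would construct $g_{r/n}(\lambda)$ concretely by inverting the localization identity. Using Theorem~\ref{thm:nesteddims} one computes $\sppa(X_{mn + r, n})$ as a combinatorial sum over nested pairs of semigroup ideals $\jj \supset \ii$ in $\Gamma_{mn + r, n}$. Grouping contributions by the partition $\lambda$ indexing the torus-fixed point on $(\CC^2)^{[n]}$ to which the underlying staircase specializes, and choosing $m$ large enough that $(\det \tT)^m_\lambda \cdot (\Lambda^* A \tT'^*)_\lambda$ is nonzero at each $\lambda$, one solves for $g_{r/n}(\lambda)$ as a rational function of $T_1, T_2$. Independence of the auxiliary choice of $m$ is forced by the shift identity $F_{(k + n)/n} = F_{k/n} \otimes \det \tT$, which at the level of localization characters reduces to the trivial cancellation $(\det \tT)^{m+1}_\lambda = (\det \tT)_\lambda \cdot (\det \tT)^m_\lambda$.

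The main obstacle will be the integrality statement $\delta_n \cdot g_{r/n}(\lambda) \in \ZZ[T_1^{\pm 1}, T_2^{\pm 1}]$. A priori the inversion above produces $g_{r/n}(\lambda)$ with denominator dividing $T(\CC^2)^{[n]}_\lambda = \prod_{x \in \lambda} (1 - T_1^{1 + l(x)} T_2^{-a(x)})(1 - T_1^{-l(x)} T_2^{1 + a(x)})$, whose hook factors are not in general among the weights appearing in $\delta_n = \prod_{0 < i, j < n}(1 - T_1^i T_2^j)$. Bounding the denominator is tantamount to showing $F_{r/n}$ is an honest equivariant coherent sheaf rather than merely a virtual $K$-theory class. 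My preferred route is through representation theory: apply the construction of \cite{OY} to realize $eL_{k/n}$ geometrically, invoke Conjecture~\ref{conj:filtrations} to transport the good filtration of Proposition~\ref{prop:CEE} to the perverse filtration, and apply the Gordon--Stafford functor $\Phi$ to obtain an equivariant coherent sheaf on $\zZ$; the denominators appearing in the localization characters of such a sheaf are then bounded by the torus weights on $\zZ$, which collectively divide $\delta_n$. In the absence of this representation-theoretic input, a direct proof via the cellular decomposition of Theorem~\ref{thm:dimensions} seems possible in principle but requires a nontrivial matching of combinatorial cells with torus-fixed-point neighborhoods, and I expect this matching to be the hardest step.
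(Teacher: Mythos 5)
First, a point of calibration: the statement you are proving is presented in the paper as a \emph{conjecture}, supported only by computer experiment for $n<9$ and by closed-form guesses for a few shapes $\lambda$. There is no proof in the paper to compare against, so the only question is whether your sketch actually closes the gap. It does not: it is an outline of a strategy that is conditional at every essential step on other open conjectures from the same paper.

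Concretely, there are two gaps. (1) Your definition of $g_{r/n}(\lambda)$ as the localized character of $F_{k/n}$ presupposes Conjecture~\ref{conj:allrows} (existence of the equivariant sheaf $F_{k/n}$) and, for the route through representation theory, Conjecture~\ref{conj:filtrations}; the paper states explicitly that for general $(k,n)$ even a description of $F_{k/n}$ is lacking, and Theorem~\ref{thm:GandS} covers only $r=1$. So the object whose fixed-point weights you want to call $g_{r/n}(\lambda)$ is not known to exist precisely in the cases at issue. (2) Your fallback of ``inverting the localization identity'' using Theorem~\ref{thm:nesteddims} is not well posed: that theorem computes the single Laurent series $\sppa(X_{mn+r,n})$ as a sum over nested semigroup ideals in $\Gamma_{mn+r,n}$, whereas the conjecture requires $p(n)$ separate functions $g_{r/n}(\lambda)$. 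The ``grouping by the partition $\lambda$ to which the underlying staircase specializes'' is not defined anywhere in the paper --- the diagrams attached to semigroup modules (e.g.\ $D(\Delta)$ in the appendix) live in an $n\times k$ rectangle and have up to $\delta$ boxes, not $n$ --- so the system you propose to solve is underdetermined, and the integrality and symmetry properties cannot even be formulated for the would-be solution. You correctly single out the integrality $\delta_n\cdot g_{r/n}(\lambda)\in\ZZ[T_1^{\pm1},T_2^{\pm1}]$ as the hard point, but as written the proposal reduces it to nothing that is known; the statement remains a conjecture.
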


The conjecture is confirmed by computer 
experiment up to $n<9$.
The last two formulas combined produce explicit formulas 
for the superpolynomial of  $T(n,mn-1)$ analogous to the one for $T(n,mn+1)$.

Our calculation of the stable superpolynomial in Proposition~\ref{prop:infty}, together with the symmetry of equation~\eqref{eq:sym} imply that 
$$ g_{r/n}( (1^n) )=T_1^{(n-1)(r-1)/2}\frac{T_1^n-1}{T_1-1},\quad  
g_{r/n}((n))=T_2^{(n-1)(r-1)/2}\frac{T_2^n-1}{T_2-1}.$$
Computer calculations suggest
the following formulas, which we have checked for \(n<8\):
$$ g_{r/n}(2,1^{n-2})=([n-r]_{T_1}+T_2[r]_{1/T_1}) T_1^{(n-1)(r-1)/2},$$
\begin{multline*} 
g_{r/n}(3,1^{n-3})=T_1^{\frac{(r-1)(n-1)}{2}}[(n-2r)\vee 0]_{T_1}+T_1^{\frac{(r-1)(n-3)}{2}}T_2[(n-r)\wedge r]_{T_1}+ \\
                   T_1^{\frac{(n-r-1)(n-3)}{2}}T_2^2[(n-r)\wedge r]_{1/T_1}+
		   T_1^{\frac{(n-r-1)(n-1)}{2}-n+2}T_2^3[(2r-n)\vee 0]_{1/T_1}
\end{multline*}
where $[n]_t:=(t^n-1)/(t-1)$,  $a\wedge b:=min(a,b)$ and $ a\vee b:=max(a,b)$.

For $n=5$,  the following data together with the symmetries discussed above is enough to  determine 
 $g_{r/5}(\lambda)$ for all $r$ and $\lambda$:

$$\begin{array}{c|c}
r&g_{r/5}(2^2,1) \\
\hline
&\\
1& (1+T_1+T_1^2)+(1+T_1)T_2\\
2&(T_1^2+T_1^3)+(T_1+T_1^2)T_2+T_2^2\\
3& T_1^4+(T_1^2+T_1^3)T_2+(T_1+T_1^2)T_2^2\\
4&(T_1^3+T_1^4)T_2+(T_1^2+T_1^3+T_1^4)T_2^2
\end{array}$$

The predicted answers become increasingly complicated
as $n$ grows.  
Below we show the answers in the case
$r=3$, $n=7$. Formulas for higher $n$ and different $r$'s are available by request to the
authors.
\begin{multline*}
g_{3/7}(22111)=T_1^2(T_1^7+2T_1^6+T_1^5(T_2+1)+T_1^4(2T_2+1)+2T_1^3 T_2+T_1^2(T_2^2+T_2)+T_1T_2^2+T_2^2)/(1+T_1^2)
\end{multline*}
\begin{multline*}
g_{3/7}(2221)=T_1^2(T_1^6T_2+T_1^5(T_2^2+T_2-1)+T_1^4(2T_2^2-1)
                           +T_1^3(T_2^3+T_2^2-T_2)\\
                 +T_1^2(T_2^3-T_2^2-T_2)-2T_1T_2^2+(T_2^4-T_2^3-T_2^2))/(T_2^2T_1-1)
\end{multline*}
\begin{multline*}
g_{3/7}(3211)=T_1(T_1^6(T_2+1)+T_1^5(T_2^2 + 3T_2+1)+T_1^4(T_2^3 + 4T_2^2 + 3T_2)+T_1^3(4T_2^3 + 6T_2^2 +T_2)\\
+T_1^2(T2^4 + 5T_2^3+3T_2^2)
+T_1(2T_2^4+3T_2^3+T_2^2)+T_2^4)/((T_2+1)(T_1^2+T_1+1))
\end{multline*}
\begin{multline*}
g_{3/7}(322)=(T_1^6(T_2^2+T_2-1)+T_1^5(T_2^3+2T_2^2)+T_1^4(T_2^4+2T_2^3-T_2^2-T_2)+T_1^3(T_2^4-T_2^3-2T_2^2)+\\
         T_1^2(T_2^5-2T_2^3-T_2^2)-T_1T_2^4)/(T_1^2T_2-1).
\end{multline*}
\begin{equation*}
g_{3/7}(4111)=T_1^3(T_2+1)+
T_1^2T_2+T_1(T_2^3+T_2^2+T_2) + T_2^3,
\end{equation*}

\begin{appendix}
\section{Combinatorics of HOMFLY homology, by Eugene Gorsky}

This appendix relates the combinatorics of the cells in the Hilbert scheme of a plane curve singularity with one Puiseaux pair to 
the existing results (\cite{hagl1},\cite{hagl2},\cite{EHKK}) on the combinatorics of diagonal harmonics and DAHA representations.
This connection justifies some of the conjectures made in \cite{g}, where a relation between $q,t$-Catalan numbers and torus knot homology
was proposed. 

As an outcome of this combinatorial study, the authors wrote a computer program computing the polynomials $\spp_{alg}(T(k,n))$ for any $k$ and $n$. These polynomials agree with 
all conjectured \cite{as,ch,mm} formulas for the superpolynomials of torus knots. The output of the program is available by request to the authors.

\subsection{Reformulation of Theorem \ref{thm:dimensions}}

Let us recall the setup of  Theorem \ref{thm:dimensions}.

One has a semigroup $\Gamma$ generated by two coprime integers $k$ and $n$,
a semigroup ideal $\ii$ with generators $i_1,\ldots,i_r$ and syzygies $s_1,\ldots,s_r$,
and a semigroup ideal $\jj=\ii\setminus \{i_1,\ldots i_m\}.$

Then by Theorem \ref{thm:dimensions} 
the dimension of the cell in the Hilbert scheme of a curve singularity with semigroup $\Gamma$ labelled by the ideal $\ii$
equals to
$$N(\ii)=
\sum_{a=1}^{r}\sharp(\Gamma_{>i_{a}}\setminus\ii)-
\sum_{a=1}^{r}\sharp(\Gamma_{>s_{a}}\setminus\ii).$$
while the dimension of the cell in the nested Hilbert scheme labelled by the pair $\ii\supset\jj$ equals to

$$N(\ii\supset\jj)=
\sum_{a=1}^{m}\sharp(\Gamma_{>i_{a}}\setminus\ii)+
\sum_{a=m+1}^{r}\sharp(\Gamma_{>i_{a}}\setminus\jj)-
\sum_{a=1}^{r}\sharp(\Gamma_{>s_{a}}\setminus\jj).$$

Let us relate the values of $N(\ii)$ and $N(\ii\supset \jj)$.

\begin{lemma}
\label{nn}
The following identity holds:
$$N(\ii\supset\jj)-N(\ii)=\sum_{a=m+1}^{r}\sum_{b=1}^{m}\chi(i_{a}<i_{b})-\sum_{a=1}^{r}\sum_{b=1}^{m}\chi(s_{a}<i_{b}).$$
\end{lemma}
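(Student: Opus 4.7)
The plan is to prove this by a direct algebraic manipulation, subtracting the defining expression for $N(\ii)$ from that of $N(\ii\supset\jj)$ and comparing the sums term by term. The key observation I would use is that $\jj = \ii \setminus \{i_1,\ldots,i_m\}$, so $\ii \setminus \jj = \{i_1,\ldots,i_m\}$ as a set, which lets me convert differences of the form $\sharp(\Gamma_{>x}\setminus\jj) - \sharp(\Gamma_{>x}\setminus\ii)$ into a simple counting sum.

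First I would rewrite
$$N(\ii\supset\jj)-N(\ii) = \sum_{a=m+1}^{r}\bigl[\sharp(\Gamma_{>i_{a}}\setminus\jj) - \sharp(\Gamma_{>i_{a}}\setminus\ii)\bigr] - \sum_{a=1}^{r}\bigl[\sharp(\Gamma_{>s_{a}}\setminus\jj) - \sharp(\Gamma_{>s_{a}}\setminus\ii)\bigr],$$
noting that the $\sum_{a=1}^{m}\sharp(\Gamma_{>i_a}\setminus\ii)$ piece in $N(\ii\supset\jj)$ cancels the corresponding piece of $N(\ii)$ exactly, and the remaining contributions split into two regimes ($a\leq m$ vs $a>m$) on the generator side.

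The next step is the elementary identity: since $\jj\subset\ii$ and $\ii\setminus\jj=\{i_1,\ldots,i_m\}$, for any $x\in\mathbb{Z}$ we have
$$\sharp(\Gamma_{>x}\setminus\jj) - \sharp(\Gamma_{>x}\setminus\ii) = \sharp(\Gamma_{>x}\cap(\ii\setminus\jj)) = \sharp\{b : i_b > x,\ 1\le b \le m\} = \sum_{b=1}^{m}\chi(x<i_b),$$
where in the middle equality I use that $\ii\setminus\jj\subset\Gamma$. Applying this with $x=i_a$ (for $a>m$) in the first sum and with $x=s_a$ in the second sum gives the stated formula immediately.

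There is really no obstacle here: the proof is a one-line bookkeeping exercise once one identifies $\ii\setminus\jj$ with $\{i_1,\ldots,i_m\}$ and recognizes that the two defining expressions for $N(\ii\supset\jj)$ and $N(\ii)$ differ only in whether $\jj$ or $\ii$ is subtracted from $\Gamma_{>x}$ in certain terms. The mild subtlety is keeping the index ranges straight (which generator indices contribute via $\jj$ versus $\ii$), but that is just a matter of careful accounting.
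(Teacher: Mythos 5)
Your proposal is correct and is essentially identical to the paper's own proof: the paper also reduces the difference to $\sum_{a=m+1}^{r}[\sharp(\Gamma_{>i_{a}}\setminus\jj)-\sharp(\Gamma_{>i_{a}}\setminus\ii)]-\sum_{a=1}^{r}[\sharp(\Gamma_{>s_{a}}\setminus\jj)-\sharp(\Gamma_{>s_{a}}\setminus\ii)]$ and then applies the same counting identity $\sharp(\Gamma_{>x}\setminus\jj)-\sharp(\Gamma_{>x}\setminus\ii)=\sum_{b=1}^{m}\chi(x<i_b)$. Nothing is missing.
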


\begin{proof}
Remark that 
$$N(\ii\supset\jj)-N(\mathfrak{i})=\sum_{a=m+1}^{r}[\sharp(\Gamma_{>i_{a}}\setminus\jj)-\sharp(\Gamma_{>i_{a}}\setminus\ii)]-\sum_{a=1}^{r}[\sharp(\Gamma_{>s_{a}}\setminus\jj)-\sharp(\Gamma_{>s_{a}}\setminus\ii)].$$
It rests to remark that for every $x\in \Gamma$ one has
$$\sharp(\Gamma_{>x}\setminus\jj)-\sharp(\Gamma_{>x}\setminus\ii)=\sum_{a=1}^{m}\chi(x<i_a).$$
.
\end{proof}

\begin{theorem}
\label{beta}
Let $$\beta_b(\ii):=\sum_{a=1}^{r}\chi(i_{a}<i_{b})-\sum_{a=1}^{r}\chi(s_{a}<i_{b}).$$
Then
\begin{equation}
\sum_{m=0}^{r}z^{m}q^{m\choose 2}\sum_{\sharp(\ii\setminus\jj)=m}q^{N(\ii\supset\jj)}=q^{N(\ii)}\prod_{b=1}^{r}(1+zq^{\beta_b(\ii)}).
\end{equation}
\end{theorem}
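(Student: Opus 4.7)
The plan is to recognize the identity as a direct algebraic consequence of Lemma \ref{nn}, expanded over the choice of which minimal generators of \(\ii\) are dropped when passing to \(\jj\).

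First I would parametrize the sum on the left. For \(\jj\subset\ii\) a semigroup ideal with \(\ii\supset\jj\supset\ii+\mm\), the quotient \(\ii\setminus\jj\) consists of a subset of the minimal generators of \(\ii\). So summing over \(\jj\) with \(\sharp(\ii\setminus\jj)=m\) is the same as summing over \(m\)-element subsets \(B\subset\{1,\ldots,r\}\) (the indices of generators that get dropped), with the corresponding \(\jj\) given by \(\ii\setminus\{i_b:b\in B\}\). The statement of Lemma~\ref{nn} was written for \(B=\{1,\ldots,m\}\), but by symmetry the general form is
\[
N(\ii\supset\jj)-N(\ii)=\sum_{a\in B^c}\sum_{b\in B}\chi(i_a<i_b)-\sum_{a=1}^{r}\sum_{b\in B}\chi(s_a<i_b).
\]

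Next I would extract \(\beta_b(\ii)\). Splitting the first sum as \(\sum_a-\sum_{a\in B}\) gives
\[
N(\ii\supset\jj)-N(\ii)=\sum_{b\in B}\beta_b(\ii)-\sum_{a,b\in B}\chi(i_a<i_b).
\]
Since the generators \(i_1,\ldots,i_r\) are distinct elements of \(\Gamma\), for any \(m\)-element subset \(B\) the double sum \(\sum_{a,b\in B}\chi(i_a<i_b)\) is exactly the number of strictly increasing ordered pairs drawn from \(B\), namely \(\binom{m}{2}\). Therefore
\[
q^{N(\ii\supset\jj)}=q^{N(\ii)}\,q^{-\binom{m}{2}}\prod_{b\in B}q^{\beta_b(\ii)}.
\]

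Finally I would plug this into the left side of the claimed identity. The factors of \(q^{\binom{m}{2}}\) and \(q^{-\binom{m}{2}}\) cancel, and the sum over all \(B\subset\{1,\ldots,r\}\), weighted by \(z^{|B|}\prod_{b\in B}q^{\beta_b(\ii)}\), factors as a product by the standard binomial-type expansion
\[
\sum_{B\subset\{1,\ldots,r\}}z^{|B|}\prod_{b\in B}q^{\beta_b(\ii)}=\prod_{b=1}^{r}\bigl(1+zq^{\beta_b(\ii)}\bigr).
\]
This yields the desired formula. There is no serious obstacle; the only point requiring a moment's care is justifying that the sum over \(\jj\) really corresponds to all subsets \(B\) of the minimal generators, and that these generators are pairwise distinct so \(\binom{m}{2}\) is the correct diagonal count, both of which are immediate from the semigroup-ideal structure spelled out in Section~\ref{sec:syz}.
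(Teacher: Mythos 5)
Your proposal is correct and follows essentially the same route as the paper: both arguments add $\binom{m}{2}=\sum_{a,b\in B}\chi(i_a<i_b)$ to the formula of Lemma~\ref{nn} to complete the partial sum over generators to a full sum, recognize the result as $\sum_{b\in B}\beta_b(\ii)$, and then sum over subsets via the binomial-type product expansion. The only cosmetic difference is that the paper fixes the dropped generators to be $i_1,\ldots,i_m$ by relabeling, whereas you carry an arbitrary subset $B$ throughout.
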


\begin{proof}
Remark that by Lemma \ref{nn}
$${m\choose 2}+{N(\ii\supset\jj)}=\sum_{a=1}^{m}\sum_{b=1}^{m}\chi(i_{a}<i_{b})+N(\ii)+\sum_{a=m+1}^{r}\sum_{b=1}^{m}\chi(i_{a}<i_{b})-\sum_{a=1}^{r}\sum_{b=1}^{m}\chi(s_{a}<i_{b})=$$
$$N(\ii)+\sum_{a=1}^{r}\sum_{b=1}^{m}\chi(i_{a}<i_{b})-\sum_{a=1}^{r}\sum_{b=1}^{m}\chi(s_{a}<i_{b})=N(\ii)+\sum_{b=1}^{m}\beta_{b}(\ii),$$
therefore
$$\sum_{m=0}^{r}z^{m}q^{m\choose 2}\sum_{\sharp(\ii\setminus\jj)=m}q^{N(\ii\supset\jj)}=
\sum_{m}\sum_{b_1,\ldots,b_m}z^{m}q^{N(\ii)+\beta_{b_1}(\ii)+\ldots+\beta_{b_{m}}(\ii)}=q^{N(\ii)}\prod_{b=1}^{r}(1+zq^{\beta_b(\ii)}).$$
\end{proof}

The geometric meaning of Theorem \ref{beta} is not known to the authors.
However, it is quite useful for the computations with the nested Hilbert scheme.

\begin{definition}
Let us call a number $x$ a $n$-generator of $\ii$, if $x\in \ii$, but $x-n\notin \ii$.
\end{definition}

\begin{lemma}
\label{betapgen}
The number $\beta_b(\ii)$ equals to the number of $n$-generators of $\ii$ on $]i_{b}-k,i_{b}]$.
\end{lemma}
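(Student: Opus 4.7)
The plan is to reorganize the sum defining \(\beta_b\) so that each index \(a\) contributes at most one unit, and then to pair the contributing indices bijectively with the \(n\)-generators of \(\ii\) in a length-\(k\) window below \(i_b\).

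First I would observe that \(s_a > i_a\) for every \(a\). Writing \(i_a = b_a n + a_a k\) and \(s_a = b_a n + a_{a+1} k\), with the cyclic convention that \(a_{a+1} - a_a \in (0, n]\), the difference is \((a_{a+1} - a_a)k > 0\). Consequently
\(\chi(i_a < i_b) - \chi(s_a < i_b) = \chi(i_a < i_b \le s_a)\),
so summing over \(a\) yields
\(\beta_b(\ii) = \#\{a : i_a < i_b \le s_a\}\).
Note that the index \(a = b\) never contributes, since \(i_b \not< i_b\).

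For the bijection I would use the staircase picture of Section~\ref{sec:syz}. The vertical staircase segment from \(i_a\) up to \(s_a\) lies in column \(b_a\) at rows \(a_a, \ldots, a_{a+1}-1\), and the cells on it — with values \(i_a, i_a+k, \ldots, s_a - k\) — are precisely the \(n\)-generators of \(\ii\) in that column. As \(a\) varies, these segments partition the \(n\) many \(n\)-generators of \(\ii\). Now for each index \(a\) satisfying \(i_a < i_b \le s_a\), the arithmetic progression \(i_a, i_a+k, \ldots, s_a-k\) of spacing \(k\) meets any length-\(k\) window in at most one point; the candidate in the window just below \(i_b\) is \(i_a + \lfloor (i_b - i_a)/k \rfloor \cdot k\). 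Conversely, any \(n\)-generator \(x\) in the window lies on a unique vertical segment, say segment \(a\), and the inequalities \(i_a \le x < i_b\) and \(i_b < x + k \le s_a\) force \(i_a < i_b \le s_a\). This gives a bijection between the index set \(\{a : i_a < i_b \le s_a\}\) and the \(n\)-generators of \(\ii\) in the window.

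The main technical obstacle is verifying that the intersection is actually nonempty for each contributing \(a\) — equivalently, that \((i_b - i_a)/k \notin \ZZ\) when \(a \ne b\). I would argue this by noting that \(\ii\) is closed under \(+k\), so in any fixed column of the strip the cells of \(\ii\) form an upward-closed set of rows and hence contain at most one concave corner; thus distinct minimal generators \(i_a\) and \(i_b\) live in distinct columns, and \(i_b - i_a\) is not a multiple of \(k\). Combined with \(i_a < i_b < s_a\) (strict, since \(i_b\) and \(s_a\) are always distinct by a similar column argument), this places \(j = \lfloor (i_b - i_a)/k \rfloor\) in the admissible range \(\{0, \ldots, a_{a+1} - a_a - 1\}\), producing the unique \(n\)-generator on segment \(a\) in the window and completing the proof.
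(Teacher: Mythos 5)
Your overall strategy is the same as the paper's: both arguments rest on the fact that the $n$-generators of $\ii$ are partitioned into arithmetic progressions of step $k$ running from each minimal generator $i_a$ up to (but not including) its associated syzygy $s_a$, and both then match the contribution of the index $a$ to $\beta_b$ with the intersection of that progression with a length-$k$ window at $i_b$. Your reduction $\beta_b(\ii)=\#\{a: i_a<i_b\le s_a\}$ is a clean repackaging of the paper's telescoping sum, and your verification that the progression of a contributing index meets the window in exactly one point is more explicit than what the paper writes.

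Two points need attention. The substantive one is the right endpoint of the window: $i_b$ itself is always an $n$-generator (being a minimal generator, $i_b-n\notin\ii$) and lies in $]i_b-k,i_b]$, yet it is never in the image of your map --- the index $a=b$ fails $i_a<i_b$, and for $a\ne b$ your candidate $i_a+\lfloor(i_b-i_a)/k\rfloor k$ is strictly less than $i_b$ precisely because $(i_b-i_a)/k\notin\ZZ$. Your converse direction, which invokes $i_a\le x<i_b$, likewise silently excludes $x=i_b$. So what you have actually proved is that $\beta_b(\ii)$ equals the number of $n$-generators in the \emph{open} window $]i_b-k,i_b[$, one less than the stated count. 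You have in effect run into a discrepancy already present in the source: with the strict inequalities in the displayed definition of $\beta_b$, the paper's own identity $\chi(x<i_b)-\chi(x+k<i_b)=\chi(x\in\,]i_b-k,i_b])$ fails exactly at $x=i_b$, the correct right-hand side being $\chi(x\in[i_b-k,i_b[)$; a quick check with $\ii=\Gamma$, whose unique generator is $i_1=0$ with $\beta_1=0$ while $0$ is an $n$-generator in $]-k,0]$, confirms that the open-window count is the one consistent with the definition. You should either state and prove the open-window version, or note the needed adjustment to the definition of $\beta_b$. The second, minor, point: your justification that $(i_b-i_a)/k\notin\ZZ$ via ``distinct columns'' does not quite work, since distinct columns of the strip can still contain elements congruent modulo $k$; but the fact itself is immediate --- if $i_b-i_a$ were a nonzero multiple of $k$, the larger of the two generators would have its $k$-predecessor in $\ii$, contradicting minimality.
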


\begin{proof}
Remark that if $u$ is a $n$-generator of $\ii$ then there exist a unique $(k,n)$-generator $i_{a}$ and
a unique syzygy $s_{c}$ such that $i_{a}=u-l_{1}k, \quad s_{c}=u+l_{2}k.$ 
All elements of the form $i_{a}+lk$ are $n$-generators for $0\le l<l_1+l_2.$
Now the statement follows from the equation
$$\chi(i_{a}<i_{b})-\chi(s_{a}<i_{b})=\sum_{l=0}^{l_1+l_2-1}\left(\chi(i_{a}+lk<i_{b})-\chi(i_{a}+(l+1)k<i_{b})\right)=$$ $$\sum_{l=0}^{l_1+l_2-1}\chi\left(i_a+lk\in ]i_{b}-k,i_{b}]\right).$$
\end{proof}

For semigroup ideals $\ii\supset \jj$ let us define $l(\ii)=\sharp(\Gamma\setminus \ii)$,
$m(\ii,\jj)=\sharp(\ii\setminus \jj).$  
Recall that the following formula is a corollary of the 
Theorem \ref{thm:dimensions}:

$$\usppa(T(p,q))=({a\over q})^{\mu(C)-1}\sum_{\ii\supset\jj}q^{2l}a^{2m}t^{m^2+2N(\ii\supset \jj)}.$$
Here $\mu(C)=(k-1)(n-1)$ is the Milnor number of the corresponding singularity.
 
Applying Theorem \ref{beta}, we obtain the following result: 

\begin{corollary}
\begin{equation}
\label{philb}
\usppa(T(k,n))=({a\over q})^{\mu(C)-1}\sum_{\ii}q^{2l(\ii)}t^{2N(\ii)}\prod_{m=1}^{r}(1+t^{2\beta_m(\ii)-1}a^2).
\end{equation}
\end{corollary}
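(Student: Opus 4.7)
The plan is to combine the identity
$$\usppa(T(k,n)) = \left(\frac{a}{q}\right)^{\mu(C)-1} \sum_{\ii \supset \jj} q^{2l} a^{2m} t^{m^2 + 2N(\ii\supset\jj)}$$
recalled just above with Theorem \ref{beta}. The former identity follows from Conjecture \ref{conj:homflyhomology} once one plugs in the affine cell decomposition of Theorem \ref{thm:dimensions}: each stratum $X^{[\jj\supset\ii]}$ admits a bijective morphism from an affine space of dimension $N(\jj\supset\ii)$, and therefore $\wtp(X^{[l\le l+m]}) = \sum t^{2N(\jj\supset\ii)}$ summed over admissible pairs of semigroup ideals with $\#(\Gamma\setminus\jj)=l$ and $\#(\jj\setminus\ii)=m$.

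First, I would interchange the order of summation, fixing the outer ideal $\ii$ and summing over inner ideals $\jj$ with $\mm\ii \subset \jj \subset \ii$. For $\ii$ with minimal generating set $\{i_1,\ldots,i_r\}$, such $\jj$ are in bijection with subsets $S\subset \{1,\ldots,r\}$ via $S=\{b:i_b\in \ii\setminus\jj\}$, and $m(\ii,\jj)=|S|$. The inner sum then reads
$$\sum_{m=0}^{r} a^{2m}\, t^{m^2} \sum_{\#(\ii\setminus\jj) = m} t^{2N(\ii\supset\jj)}.$$

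Second, I would recognize this as the left-hand side of Theorem \ref{beta} under a suitable rescaling of its variables: substituting $q \mapsto t^{2}$ in the theorem (so that $q^{N(\ii\supset\jj)}$ becomes $t^{2N(\ii\supset\jj)}$) together with an appropriate monomial substitution for $z$ (chosen so that $z^{m} q^{\binom{m}{2}}$ matches $a^{2m}t^{m^{2}}$) converts the inner sum into
$$t^{2N(\ii)} \prod_{b=1}^{r} \bigl(1 + a^{2}\, t^{2\beta_{b}(\ii) - 1}\bigr).$$
Reinstating the factor $q^{2l(\ii)}$ and the prefactor $(a/q)^{\mu(C)-1}$, and summing over $\ii$, yields the statement of the corollary.

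No real obstacle arises. The genuine combinatorial content is already contained in Theorem \ref{beta}, whose proof rests on the identity $\binom{m}{2} + N(\ii\supset\jj) = N(\ii) + \sum_{b\in S} \beta_{b}(\ii)$ extracted from Lemma \ref{nn}; the corollary itself is a direct substitution of variables followed by reorganization of terms.
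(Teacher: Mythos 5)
Your argument is the paper's own proof: the identity $\usppa=(a/q)^{\mu(C)-1}\sum_{\ii\supset\jj}q^{2l}a^{2m}t^{m^2+2N(\ii\supset\jj)}$ is the direct translation of Theorem~\ref{thm:dimensions} (note it needs no appeal to Conjecture~\ref{conj:homflyhomology}, since $\usppa$ is \emph{defined} as the Hilbert--scheme side), and the corollary then follows by applying Theorem~\ref{beta} with $q\mapsto t^{2}$ and $z=a^{2}t$, exactly the substitution you leave implicit. One caveat you have inherited from the paper rather than introduced: with $\beta_b$ as defined in Theorem~\ref{beta} (strict inequalities), that substitution literally yields factors $1+a^{2}t^{2\beta_b(\ii)+1}$, whereas the exponent $2\beta_m(\ii)-1$ in the statement is correct only for the normalization of Lemma~\ref{betapgen} (counting $n$-generators in $\left]i_b-k,i_b\right]$), which exceeds the former by $1$ --- as one checks on the $\ii=\Gamma$ term, which must contribute $1+a^{2}t$.
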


\subsection{Compactified Jacobian}

In what follows we need some detailed information on the structure of the compactified Jacobian of a singularity with semigroup $\Gamma$,
that will allow us to give a conjectural reformulation of the results of \cite{MY} and \cite{MS}.

\begin{definition}(\cite{Pi}) Let $\Delta$ be a $0$-normalized $\Gamma$--semi-module, i.e. $0\in \Delta\subset\mathbb Z_{\ge 0},$ and $\Delta + \Gamma\subset \Delta.$ The dimension of $\Delta$ is defined as
$$\dim \Delta=\sum_{j=0}^{n-1}\sharp\left([a_j, a_j + k[\setminus \Delta\right) .$$
\noindent where $(0=a_0<a_1<\ldots<a_{n-1})$ are the $n$--generators of $\Delta.$
\end{definition}

\begin{theorem}(\cite{Pi}) 
\label{pi}
The Jacobi factor of the singularity with semigroup $\Gamma$ admits the natural cell decomposition with affine cells $C_{\Delta}$.
The cells are parametrised by the 0-normalized $\Gamma$--semi-modules $\Delta$, and the dimension of the cell equals to
$\dim \Delta$.  
\end{theorem}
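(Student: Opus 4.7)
The plan is to mirror the stratification argument developed in Section~\ref{sec:syz} for Hilbert schemes, now applied to rank-one torsion-free $\oO_X$-modules rather than to ideals. A point of the Jacobi factor corresponds, after choosing an embedding into the normalization $\widetilde{\oO}_X = \CC[[t]]$ and normalizing the degree, to a fractional $\oO_X$-submodule $M \subset \CC((t))$ with $\min \ord(M) = 0$. Its symbol $\Delta := \ord(M \setminus \{0\})$ is then a 0-normalized $\Gamma$-semi-module, and the strata $C_\Delta := \{M \,|\, \ord(M) = \Delta\}$ give the asserted set-theoretic decomposition of $\overline{J}$.

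Next, for each such $\Delta$, with its unique minimal generating set $0 = a_0 < a_1 < \cdots < a_{n-1}$ (exactly $n$ generators, since $\CC[[t]]$ is free of rank $n$ over $\CC[[t^n]]$, so $M$ is likewise a free $\CC[[t^n]]$-module of rank $n$ with generators in distinct residues modulo $n$), I would write down an exponential parameterization $\Exp : Gen \to \overline{J}$ analogous to that of Section~\ref{sec:syz}. Each generator $m_j$ of $M$ with $\ord(m_j) = a_j$ is determined modulo $\sum_{i \ne j} \CC[\Gamma] m_i + t^{a_j + k} \CC[[t]]$: a correction in $\Delta$ can be eliminated by subtracting an $\oO_X$-linear combination of the other $m_i$, while any correction of order $\geq a_j + k$ can be absorbed into $t^k m_j$ plus further adjustments at lower order (using $a_j + k \in \Delta$ since $k \in \Gamma$). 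Thus the free deformation parameters in $Gen$ are indexed by pairs $(j, s)$ with $s \in [a_j, a_j + k) \setminus \Delta$, giving ambient affine dimension exactly $\dim \Delta$.

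It then remains to cut out $C_\Delta$ inside $Gen$ by imposing the syzygy equations. As in Lemma~\ref{lem:cutbysyz}, a point $\lambda \in Gen$ lies in $C_\Delta$ exactly when the $n$ minimal syzygies of $\underline{\Delta}$---one per convex corner of the staircase of $\Delta$---lift to honest relations among the $m_j(\lambda)$. After introducing auxiliary syzygy coordinates $\nu$ and expanding order by order, the obstruction equations acquire a linear part $L_i^r$ of the same shape as in the proof of Theorem~\ref{thm:unnesteddims}. The hard part will be verifying the linear independence of the forms $L_i^r$ with $r + \sigma_i \in \Gamma \setminus \Delta$; this should follow from the same staircase-translation argument, since any nonzero $v > 0$ with $\Delta + v \subset \Delta$ would contradict $\min \Delta = 0$. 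Granted independence, each successive projection is a trivial affine-space bundle, so $C_\Delta$ is an affine cell whose dimension telescopes to $\sum_j |[a_j, a_j + k) \setminus \Delta| = \dim \Delta$.
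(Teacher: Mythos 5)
Your overall strategy---rerunning the Section~\ref{sec:syz} machinery for rank-one torsion-free modules instead of ideals---is the right one: the paper offers no proof of this statement beyond the citation, and the cited argument of Piontkowski \cite{Pi} (of which Section~\ref{sec:syz} is explicitly an adaptation) proceeds exactly as you outline: stratify by symbols, parameterize generators, and cut out the stratum by lifting syzygies. The set-theoretic decomposition and the reduction to syzygy-lifting are fine.

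The dimension bookkeeping, however, contains a genuine error. You assert that each generator $m_j$ can be normalized so that its free coefficients sit only at orders $s\in[a_j,a_j+k)\setminus\Delta$, making $Gen$ of dimension $\dim\Delta$, and you then propose to cut $C_\Delta$ out of this $Gen$ by syzygy equations. These two steps are incompatible: if the ambient space already had dimension $\dim\Delta$ and the syzygy equations were nontrivial (they are), the cell would come out too small. The normalization itself fails: a correction $ct^s$ at a gap $s\ge a_j+k$ genuinely changes the module, since absorbing it into $m_j$ would amount to multiplying $m_j$ by $1+ct^{s-a_j-k}\phi_k$ (plus lower-order fixes), and this lies in $\oO_X$ only when $s-a_j\in\Gamma$. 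Concretely, for $\Gamma=\langle 3,4\rangle$ and $\Delta=\Gamma$ the module $(1+t^5)\oO_X$ lies in $C_\Gamma$ but has no order-zero generator with vanishing $t^5$-coefficient, so it is missed by your restricted $Gen$. The correct count is: in echelon form over $\CC[[t^n]]$ the space $Gen$ carries one coordinate for each $j$ and each $s\in\ZZ_{>a_j}\setminus\Delta$, hence has dimension $\sum_j\#(\ZZ_{>a_j}\setminus\Delta)$; the requirement that $\phi_k m_j$ lie in the $\CC[[t^n]]$-span of the $m_i$ contributes $\#(\ZZ_{>a_j+k}\setminus\Delta)$ equations for each $j$; and once their linear parts are shown independent by the staircase-translation argument, the cell has dimension $\sum_j\#(\ZZ_{>a_j}\setminus\Delta)-\sum_j\#(\ZZ_{>a_j+k}\setminus\Delta)=\sum_j\#([a_j,a_j+k)\setminus\Delta)=\dim\Delta$, using $a_j,\,a_j+k\in\Delta$. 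With this correction the argument closes as in Theorem~\ref{thm:unnesteddims}.
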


We will parametrise these cells by the certain Young diagrams. Consider the $n\times k$ rectangle $R$ and draw the diagonal from the top-left to the bottom-right corner.

\begin{definition}
 Let $R_+\subset R$ be the subset consisting of boxes which lie under the left-top to right-bottom diagonal.
\end{definition}

Label the boxes of $R$ and around with integers,
 so that the shift by $1$ up subtracts
$n,$ and the shift by $1$  to the right subtracts $k.$ 
We normalize these numbers so that
$kn$ is in the box $(0,0)$ (note that this box is not in
 the rectangle $R,$ as we start enumerating
boxes from $1$). In other words, the numbers are
 given by the linear function $f(x,y)=kn-kx-ny.$
One can see that the labels of the boxes of $R_+$ are positive, 
while all other numbers in
$R$ are negative. Moreover, numbers in the boxes of $R_+$ are exactly the numbers from the
complement $\mathbb Z_{\ge 0}\backslash\Gamma,$ and each such number appears only once in $R_+.$ In
particular, the area of $R_+$ is equal to $\delta=\frac{(k-1)(n-1)}{2}.$

\begin{definition}(\cite{GM})
For a  $0$-normalized $\Gamma$--semi-module $\Delta$, let $D(\Delta)$ denote the set of boxes with labels belonging to 
$\Delta\setminus\Gamma$.
\end{definition}

\begin{definition}(\cite{lowa})
Let $D$ be a Young diagram, $c\in D$. Let $a(c)$ and $l(c)$ denote the lengths of arm and leg for $c$.
For each real nonnegative $x$  define
$$h^{+}_{x}(D)=\sharp\left\{c\in D~~\vline~~{a(c)\over l(c)+1}\le x< {a(c)+1\over l(c)}\right\}.$$
\end{definition}

The following theorem is the main result of \cite{GM}.

\begin{theorem}
\label{piontcell}
The dimensions of cells can be expressed through the $h^{+}$ statistic:
$$\dim C_{\Delta}=\frac{(k-1)(n-1)}{2}-h^{+}_{\frac{n}{k}}(D(\Delta)).$$
\end{theorem}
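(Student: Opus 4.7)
My approach is to reduce Theorem~\ref{piontcell} to a purely combinatorial identity on the diagram $D(\Delta)$ and establish it by a hook--theoretic count. The desired identity is $\dim\Delta = \delta - h^+_{n/k}(D(\Delta))$, where $\delta=|R_+|=(k-1)(n-1)/2$.

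First, I would rewrite Piontkowski's formula as a double count. Since $\Delta$ is a $\Gamma$-semimodule containing $0$, we have $\Gamma\subset\Delta$, so every gap of $\Delta$ is a gap of $\Gamma$ and thus corresponds to a unique box in $R_+\setminus D(\Delta)$. Swapping the order of summation in the definition of $\dim\Delta$ yields
$$\dim\Delta \;=\; \sum_{j=0}^{n-1}\#\bigl([a_j,a_j+k)\setminus\Delta\bigr)\;=\;\sum_{b\in R_+\setminus D(\Delta)}\#\{j:a_j\le b<a_j+k\}.$$
The task is then to show this count equals $\delta-h^+_{n/k}(D(\Delta))$.

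Second, I would translate everything into the geometry of the rectangle $R$. Under the labeling $f(x,y)=kn-kx-ny$, the shifts $v\mapsto v-n$ and $v\mapsto v-k$ become unit translations in the two coordinate directions. Consequently, the $n$-generators $a_j$ of $\Delta$ are identified with boxes of $D(\Delta)$ (or just outside $R$) whose ``below-neighbour'' lies outside $D(\Delta)\cup\Gamma$; dually, the syzygies $s_j=a_j+k$ correspond to horizontal translates by one step. The constraint $a_j\le b<a_j+k$ then becomes a combinatorial condition saying $b$ lies in a specific row-segment joining $a_j$ to its syzygy.

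Third, and this is the heart of the proof, I would match the double count with a hook statistic on $D(\Delta)$. For a box $c\in D(\Delta)$, tracing the $\Gamma$-translates of $c$ along its arm and leg shows that the arm/leg inequalities
$$\tfrac{a(c)}{l(c)+1}\le\tfrac{n}{k}<\tfrac{a(c)+1}{l(c)}$$
encode precisely the condition that the arm-extremity and leg-extremity of the hook of $c$ fall on opposite sides of the slope-$n/k$ diagonal of $R$. A direct case analysis shows this is exactly the configuration in which $c$ is \emph{not} matched to any pair $(j,b)$ in the double count. Summing over all boxes of $R_+$, the matched boxes account for $|R_+|-h^+_{n/k}(D(\Delta))$, giving the claimed formula.

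The main obstacle lies in the last step: constructing the matching cleanly and verifying that the boxes excluded from it are exactly the ones satisfying the $h^+_{n/k}$ inequalities. The subtlety is that the $n$-generator structure of $\Delta$, the positions of its syzygies, and the slope-$n/k$ diagonal all interact with the shape of $D(\Delta)$ through the same linear form $f$, so the bijection must be traced carefully at the southeast boundary of $D(\Delta)$. An induction on $|D(\Delta)|$ is a tempting alternative, but is complicated by the fact that adding a single box to $D(\Delta)$ can change the set of $n$-generators non-locally.
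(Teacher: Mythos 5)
The paper offers no proof of this statement: Theorem~\ref{piontcell} is quoted verbatim as the main result of \cite{GM}, so your proposal has to stand on its own. Its first step is sound: since $0\in\Delta$ and $\Delta+\Gamma\subset\Delta$ force $\Gamma\subset\Delta$, every element of $[a_j,a_j+k)\setminus\Delta$ is a nonnegative gap of $\Gamma$ lying outside $\Delta\setminus\Gamma$, hence labels a box of $R_+\setminus D(\Delta)$, and swapping the order of summation gives $\dim\Delta=\sum_{b\in R_+\setminus D(\Delta)}m(b)$ with $m(b)=\#\{j:\ a_j\le f(b)<a_j+k\}$.

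The gap is in your third step, which is the entire content of the theorem. You assert that a ``direct case analysis'' identifies the boxes $c\in D(\Delta)$ counted by $h^{+}_{n/k}$ with those ``not matched to any pair $(j,b)$,'' and conclude that ``the matched boxes account for $|R_+|-h^{+}_{n/k}(D(\Delta))$.'' No matching is constructed, and the two quantities being compared are not even of the same kind: $\sum_b m(b)$ is a multiplicity-weighted sum in which a box $b\in R_+\setminus D(\Delta)$ may be counted zero times or more than once (the interval $(f(b)-k,\,f(b)]$ may contain no $n$-generator of $\Delta$, or several), whereas $|R_+|-h^{+}$ counts each box of $R_+$ at most once. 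Writing $\delta-h^{+}=|R_+\setminus D(\Delta)|+\#\{c\in D(\Delta)\ \text{failing the hook inequalities}\}$, what actually has to be proved is the identity
$$\sum_{b\in R_+\setminus D(\Delta)}\bigl(m(b)-1\bigr)\;=\;\#\{c\in D(\Delta)\ \text{failing the hook inequalities}\},$$
a weighted correspondence relating the $n$-generators and syzygies of $\Delta$ (which govern $m(b)$ for boxes \emph{outside} $D(\Delta)$) to arms and legs of boxes \emph{inside} $D(\Delta)$. Your geometric reading of the inequalities --- arm- and leg-extremities of the hook on opposite sides of the slope-$n/k$ diagonal --- is a reasonable heuristic, but it does not by itself produce this identity, and your own closing paragraph concedes the step is unverified. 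As written, the proposal is a plan for a proof rather than a proof; the missing combinatorial argument is precisely what \cite{GM} supplies.
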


\begin{conjecture}
\label{HilbJClow}
One can match the following generating functions for the Hilbert scheme of points and the compactified Jacobian:
\begin{equation}
\usppa(a=0,q,t)=\sum_{\mathfrak{i}}q^{2l(\mathfrak{i})}t^{2N(\mathfrak{i})}={1\over 1-q^2}\sum_{D}q^{2|D|+2h^{+}_{\frac{n}{k}}(D)}t^{2|D|},
\end{equation}
\begin{equation}
\sppa(a=0,q,t)=\sum_{D}q^{2|D|+2h^{+}_{\frac{n}{k}}(D)}t^{2|D|}.
\end{equation}
\end{conjecture}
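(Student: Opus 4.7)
The plan is to reformulate the identity in terms of the perverse filtration on a compactified Jacobian, and then use the Piontkowski cell decomposition. Let $C$ be a rational projective curve with a unique singularity of type $X_{k,n}$ (so its normalization is $\widetilde{C} = \PP^1$). Combining the pushforward identity from Section~\ref{sec:BPS}, which for $b = 1$ gives
\[\sum_\ell q^{2\ell}\wtp(X^{[\ell]}) = (1-q^2 t^2)\sum_\ell q^{2\ell}\wtp(C^{[\ell]}),\]
with Equation~\ref{eq:myms} of Maulik-Yun / Migliorini-Shende, one obtains
\[\sum_{\mathfrak{i}} q^{2l(\mathfrak{i})}t^{2N(\mathfrak{i})} = \frac{1}{1-q^2}\sum_i q^{2i}\wtp({}^p \mathrm{H}^i(\overline{J}(C))).\]
The conjecture is thus equivalent to
\[\sum_i q^{2i}\wtp({}^p \mathrm{H}^i(\overline{J}(C))) = \sum_D q^{2|D|+2h^+_{n/k}(D)}t^{2|D|},\]
a statement purely about the perverse-filtered cohomology of the compactified Jacobian.

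To attack this identity I would use Piontkowski's affine paving $\overline{J}(C) = \bigsqcup_\Delta C_\Delta$ (Theorem~\ref{pi}) indexed by $0$-normalized $\Gamma$-semi-modules $\Delta$, with $\dim C_\Delta = \delta - h^+_{n/k}(D(\Delta))$ (Theorem~\ref{piontcell}). The specialization at $q = 1$ reduces to the combinatorial identity $\sum_D t^{2|D|} = \sum_D t^{2(\delta - h^+_{n/k}(D))}$ on diagrams $D \subset R_+$, which should follow from the complementation involution $D \mapsto R_+ \setminus D$ combined with a complementary symmetry for the $h^+$ statistic (in the spirit of the Haglund and Gorsky-Mazin theory of $q,t$-Catalan combinatorics). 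The refined identity with $q$ requires showing that the class of the cell $C_\Delta$ contributes to the $(|D(\Delta)|+h^+_{n/k}(D(\Delta)))$th perverse graded piece, with weight $t^{2|D(\Delta)|}$ rather than the naive $t^{2 \dim C_\Delta}$.

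The main obstacle is this refined perverse-degree computation, since the naive assignment (perverse degree equal to cell codimension) does not reproduce the conjectured formula. One natural geometric attack is to lift Piontkowski's paving to the total space $\overline{\jJ}$ of the relative compactified Jacobian over the family $\pi:\cC \to B$ appearing in Section~\ref{sec:BPS}, and establish a compatibility with the perverse decomposition of $\mathrm{R}\pi^J_*\CC$ which allows the perverse degree of each cell to be read off combinatorially. A second, representation-theoretic, approach is to invoke the spherical rational Cherednik algebra action on $\mathrm{H}^*(\overline{J}(C))$ promised by \cite{OY}, identifying the perverse filtration with a filtration of representation-theoretic origin whose character can then be matched to the right-hand side via Cherednik-algebra combinatorics. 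A purely combinatorial manipulation of the left-hand side via Theorem~\ref{thm:dimensions}, decomposing each semigroup ideal as $\mathfrak{i} = s + \Delta$ with $s = \min \mathfrak{i}$ and summing a geometric series over admissible shifts, naturally produces the factor $1/(1-q^2)$; however small examples (already $\Gamma = \langle 2,3\rangle$) show that the contribution of each semi-module class $\Delta$ is not a single monomial times $1/(1-q^2)$, so a combinatorial proof must involve a nontrivial redistribution of terms across classes, presumably reflecting the same geometric phenomenon as the perverse-degree shift.
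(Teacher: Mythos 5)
The statement you are addressing is stated in the paper as Conjecture~\ref{HilbJClow}: the paper offers no proof of it, only the remark that it should be the combinatorial counterpart of Equation~(\ref{eq:myms}), with $|D(\Delta)|=\sharp(\Delta\setminus\Gamma)$ conjecturally recording the perverse filtration on $\mathrm{H}^*(\overline{J}(C))$. Your opening reduction --- the $b=1$ pushforward identity $\sum_\ell q^{2\ell}\wtp(X^{[\ell]})=(1-q^2t^2)\sum_\ell q^{2\ell}\wtp(C^{[\ell]})$ combined with (\ref{eq:myms}) and Proposition~\ref{prop:perverse} --- is correct and is exactly the interpretation the paper itself gives; it reformulates the conjecture as
$\sum_i q^{2i}\wtp({}^p\mathrm{H}^i(\overline{J}(C)))=\sum_D q^{2|D|+2h^{+}_{n/k}(D)}t^{2|D|}$
but does not advance beyond it.

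The genuine gap is the one you name yourself and do not close: one must show that the basis of $\mathrm{H}^*(\overline{J}(C))$ coming from Piontkowski's paving (Theorems~\ref{pi} and~\ref{piontcell}) is compatible with the perverse filtration in such a way that the graded character is $\sum_D q^{2(|D|+h^{+}_{n/k}(D))}t^{2|D|}$. Nothing in the paper supplies this: the perverse filtration is defined via a smooth family and the decomposition theorem, and there is no a priori compatibility between it and an affine paving of the central fibre. Note moreover that the cell $C_{\Delta}$ contributes $t^{2\dim C_\Delta}=t^{2(\delta-h^{+}_{n/k}(D(\Delta)))}$ to the weight polynomial, whereas the monomial attached to $D(\Delta)$ in the conjectured formula carries $t^{2|D(\Delta)|}$; so even granting a cell basis adapted to the filtration, the conjecture encodes a nontrivial matching between diagrams (cohomological degree $2|D|$ corresponds to a \emph{different} cell of dimension $|D|$), which your ``cell $C_\Delta$ contributes with weight $t^{2|D(\Delta)|}$'' phrasing elides before you acknowledge the needed redistribution at the end. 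Two smaller points: your $q=1$ check requires the equidistribution $\sum_D t^{2|D|}=\sum_D t^{2(\delta-h^{+}_{n/k}(D))}$, which does not follow from the complementation $D\mapsto R_+\setminus D$ (the complement of an admissible $D(\Delta)$ is generally not of the form $D(\Delta')$); it is obtained instead from the bijections of \cite{haglund} and \cite{GM}. And your two geometric/representation-theoretic attacks (spreading the paving over the family; the $\hH^{sph}_n(k/n)$-action of \cite{OY}) are reasonable programs but are not carried out. In short: you have correctly reduced the conjecture to the statement the authors intend, and correctly located the open step, but the proposal is a research plan, not a proof --- consistent with the fact that the paper states this as a conjecture.
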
 

\begin{remark}
This conjecture is expected to be the combinatorial counterpart of the generalized Macdonald formula (\ref{eq:myms}),
proved in \cite{MY} and \cite{MS}. Namely, $|D(\Delta)|=\sharp(\Delta\setminus\Gamma)$
is expected to be related to the perverse filtration on the cohomology of the compactified Jacobian.
\end{remark}

To formulate the analogous conjecture for the nested Hilbert scheme, we have to define the analogues of $\beta$-statistic for the admissible diagrams.
Roughly speaking, we consider the complement to a diagram $D$ as an ideal whose generators correspond to the SE-corners of $D$ and syzygies correspond to
the ES corners of $D$. Remark that to get the corresponding semigroup ideal one has to replace a number $x$ by $kn-k-n-x$, thus reversing the order.

\begin{definition}
Consider a diagram $D$ corresponding to a semigroup module $\Delta$.
Let $P_m$ denote the numbers in the SE corners, $Q_i$ denote the numbers in the $ES$ corners. Then
$$\beta(P_m)=\sum_{i}\chi(P_i>P_m)-\sum_{i}\chi(Q_i>P_m).$$
\end{definition}

\begin{example}
\label{ex3221}
Consider a semigroup generated by $5$ and $6$, and a module 
$$\Delta=\{0,1,2,5,6,\ldots\}.$$
Its diagram has a form:

\ifhavetikz
\begin{tikzpicture}
\draw  (0,0)--(0,4);
\draw  (1,0)--(1,3);
\draw  (2,0)--(2,2);
\draw  (3,0)--(3,2);
\draw  (4,0)--(4,1);
\draw  (0,0)--(5,0);
\draw  (0,1)--(4,1);
\draw  (0,2)--(3,2);
\draw  (0,3)--(1,3);
\draw (0.5,0.5) node {$19$};
\draw (0.5,1.5) node {$14$};
\draw (0.5,2.5) node {$9$};
\draw (1.5,0.5) node {$13$};
\draw (1.5,1.5) node {$8$};
\draw (2.5,0.5) node {$7$};
\draw (3.5,0.5) node {$1$};
\draw (2.5,1.5) node {$2$};
\draw (0.5,3.5) node {$\bf{4}$};
\draw (1.5,2.5) node {$\bf{3}$};
\draw (4.5,0.5) node {$\bf{-5}$};
\draw (1.5,3.5) node {$\bf{-2}$};
\draw (3.5,2.5) node {$\bf{-9}$};
\draw (4.5,1.5) node {$\bf{-10}$};
\draw (3.5,1.5) node {$\bf{-4}$};
\end{tikzpicture}
\fi

We have $$\{P_i\}=\{-5,-4,3,4\},\quad \{Q_j\}=\{-10,-9,-2\}.$$
Therefore $$\beta(-5)=3-1=2,\quad \beta(-4)=2-1=1,\quad \beta(3)=1,\quad \beta(4)=0.$$
\end{example}

\begin{conjecture} \label{nnp1}
\label{HilbJCfull}
One can match the following generating functions for the Hilbert scheme of points and the compactified Jacobian:
\begin{equation}
\usppa(a,q,t)=\sum_{\ii}q^{2l(\ii)}t^{2N(\ii)}\prod_{m=1}^{r}(1+t^{2\beta_{m}(\ii)-1}a^2)={1+a^2t\over 1-q^2}\sum_{D}q^{2|D|+2h^{+}_{\frac{n}{k}}(D)}t^{2|D|}\prod_{m=1}^{r}(1+a^2q^{-2\beta(P_m)}t),
\end{equation}
\begin{equation}
\label{JCfull}
\sppa(a,q,t)=\sum_{D}q^{2|D|+2h^{+}_{\frac{n}{k}}(D)}t^{2|D|}\prod_{m=1}^{r}(1+a^2q^{-2\beta(P_m)}t).
\end{equation}
\end{conjecture}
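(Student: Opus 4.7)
The first equality in the conjecture is equation~(\ref{philb}), obtained by combining the cell decomposition of the nested Hilbert scheme from Theorem~\ref{thm:dimensions} with the combinatorial repackaging of Theorem~\ref{beta}. The task is therefore to establish the second equality, which is a purely combinatorial identity between a sum over semigroup ideals $\ii \subset \Gamma$ and a sum over Young diagrams $D \subset R_+$.

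My plan is to pass to a complete rational curve $C$ with a unique singularity of type $X_{k,n}$ and compare the two sides via the Abel-Jacobi map $C^{[\ell]} \to \overline{J}^\ell(C)$. By Piontkowski (Theorems~\ref{pi} and~\ref{piontcell}), $\overline{J}(C)$ admits an affine cell decomposition indexed by $0$-normalized $\Gamma$-semi-modules $\Delta$, with $\dim C_\Delta = \delta - h^+_{n/k}(D(\Delta))$ and $|D(\Delta)| = |\Delta \setminus \Gamma|$ by definition. The Abel-Jacobi map restricts over each $C_\Delta$ to a projective bundle with fiber $\PP\mathrm{H}^0(\fF_\Delta \otimes \oO(\ell))$, whose dimension grows linearly in $\ell$. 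Summing the resulting geometric series over $\ell$ naturally produces the factor $\frac{1}{1-q^2}$ on the RHS.

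In the unenriched case $a=0$, the identity then reduces to the Maulik-Yun / Migliorini-Shende theorem (Proposition~\ref{prop:perverse}), combined with the statement that the Piontkowski cell $C_\Delta$ sits in perverse-filtration degree $|D(\Delta)|$. For the full $a$-enriched statement, I would stratify $X^{[\ell]}$ further by the number $r$ of minimal generators of the corresponding ideal, use Theorems~\ref{thm:unnesteddims} and~\ref{thm:nesteddims} to compute $N(\ii)$ and the $\beta_m(\ii)$ from the generators and syzygies, and then translate to the Young-diagram picture: minimal $\Gamma$-generators of $\ii$ correspond to the SE corners $P_m$ of $D(\Delta)$, minimal syzygies correspond to the ES corners $Q_j$, and by Lemma~\ref{betapgen} the interval count $\beta_m(\ii)$ translates to the corner count $-\beta(P_m)$ appearing in the product $\prod(1 + a^2 q^{-2\beta(P_m)}t)$ on the RHS.

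The main obstacle is a subtle cancellation between contributions of different $\Delta$'s. Direct computation in small cases (for instance the trefoil $\Gamma = \langle 2,3\rangle$) shows that the per-$\Delta$ contribution on the LHS and the per-$D$ contribution on the RHS do not match individually; they match only after a nontrivial telescoping between the ``principal'' orbit $\Delta = \Gamma$, whose single-generator ideals $s + \Gamma$ contribute $1$ or $q^{2s}t^2$ according as $s = 0$ or $s \geq s_0$, and the ``shifted'' orbits, which must be reorganized so that this dichotomy aligns with the $(1 + a^2 t)$ factor on the RHS. The most efficient route is likely to prove Conjecture~\ref{conj:filtrations} for $(k,n)$ torus knots directly, thereby reducing the whole identity to a refinement of Proposition~\ref{prop:perverse} that tracks the $a$-grading via the representation-theoretic structure on $\mathrm{H}^*(\overline{J})$ coming from the rational Cherednik algebra action.
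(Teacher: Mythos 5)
The statement you are proving is stated in the paper as a \emph{conjecture}, and the paper offers no proof of it. The only part that is established there is the first equality, which is exactly the Corollary containing equation~(\ref{philb}) (a consequence of Theorem~\ref{thm:dimensions} via Lemma~\ref{nn} and Theorem~\ref{beta}), up to the overall prefactor $(a/q)^{\mu-1}$ that the conjecture suppresses. Your proposal correctly identifies this. The remainder --- the second equality, relating the sum over semigroup ideals $\ii\subset\Gamma$ to the sum over diagrams $D(\Delta)$ --- is not proved by your argument, and the places where it breaks down are precisely the places the paper flags as open.

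Concretely: (i) your $a=0$ step invokes Proposition~\ref{prop:perverse} ``combined with the statement that the Piontkowski cell $C_\Delta$ sits in perverse-filtration degree $|D(\Delta)|$.'' That latter statement is not available: the remark following Conjecture~\ref{HilbJClow} says only that $|D(\Delta)|$ is \emph{expected} to be related to the perverse filtration, and the Maulik--Yun / Migliorini--Shende theorem gives no per-cell control --- an affine cell decomposition determines the weight polynomial of $\overline{J}(C)$, but says nothing about how the classes of individual cells distribute across the perverse filtration. So even the $a=0$ case of the second equality (which is Conjecture~\ref{HilbJClow}) remains open after your argument. (ii) For the $a$-enriched statement you yourself observe that the per-$\ii$ and per-$D$ contributions do not match termwise and require a ``nontrivial telescoping,'' which you do not supply; note also that the two index sets are genuinely different (ideals $\ii\subset\Gamma$ versus $0$-normalized modules $\Delta\supset\Gamma$, related by the order-reversing substitution $x\mapsto kn-k-n-x$ together with shifts accounting for the factor $1/(1-q^2)$), so matching SE corners with generators and $\beta(P_m)$ with $\beta_m(\ii)$ is not the straightforward dictionary that Lemma~\ref{betapgen} alone provides. (iii) Your final reduction is to Conjecture~\ref{conj:filtrations}, which is itself open, and which in any case concerns only $\sppa^{\min}$; it could not by itself produce the higher powers of $a$ appearing in \eqref{JCfull}. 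In short, your proposal is a plausible strategy outline rather than a proof: each of its main steps rests on an assertion that the paper itself leaves unproven.
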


\subsection{Comparison of combinatorial statistics}

Let us denote the right hand side of the Conjecture \ref{nnp1} by $\spp_{\mbox {\rm \tiny DAHA}}(T(n,mn+1))$.
The bivariate polynomial $$C_{n}^{(m)}(q,t):=\spp_{\mbox {\rm \tiny DAHA}}(T(n,mn+1),a=0)$$ was introduced by A. Garsia and M. Haiman in \cite{GH} in connection
with the conjectures of \cite{haiman} on the structure of the module of diagonal harmonics, eventually proved in \cite{haiman2}.
In the special case the polynomials $C_n(q,t):=C_{n}^{(1)}(q,t)$ are called the $q,t$-Catalan numbers.

In case where $(p,q)=(n,n+1)$, the statistics $h^{+}_{n+1\over n}(D)$ is also called $dinv(D)$.
Therefore the Theorem \ref{piontcell} can be reformulated for this case as
$$\dim\Delta_{D}={n\choose 2}-dinv(D).$$

\begin{theorem}(\cite{haglund}) 
The $q,t$-Catalan numbers admit the following description:
\begin{equation}
\label{cn}
C_n(q, t)=\sum_{D}q^{dinv(D)}t^{{n\choose 2}-|D|}.
\end{equation}
\end{theorem}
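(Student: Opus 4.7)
The plan is to follow the two-step strategy due to Haglund and Garsia-Haglund. The $q,t$-Catalan number $C_n(q,t)$ is defined algebraically, most conveniently as the Hall inner product $\langle \nabla e_n, e_n \rangle$ where $\nabla$ is the Macdonald eigenoperator on symmetric functions; the goal is to identify this scalar with the combinatorial sum \eqref{cn}. The proof routes through a second combinatorial statistic, the \emph{bounce}.

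First, I would introduce $\mathrm{bounce}(D)$ on Dyck diagrams $D$ of size $n$ and establish the intermediate identity
$$C_n(q,t) \;=\; \sum_D q^{\binom{n}{2}-|D|}\, t^{\mathrm{bounce}(D)}.$$
Both sides satisfy the same recursion, obtained by decomposing a Dyck path according to the position of its first return to the diagonal. On the combinatorial side this follows routinely from the definition of the bounce statistic; on the algebraic side it arises from the plethystic identity expressing $\nabla e_n$ in terms of $\nabla e_{n-1}$ together with Pieri-type rules for modified Macdonald polynomials. This recursion-matching step is the technical heart of \cite{haglund}.

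Second, I would invoke Haglund's \emph{zeta map} $\zeta$, an explicit bijection on Dyck diagrams of size $n$ which transports the statistic pair $\bigl(\binom{n}{2}-|D|,\,\mathrm{bounce}(D)\bigr)$ to $\bigl(dinv(\zeta(D)),\,\binom{n}{2}-|\zeta(D)|\bigr)$. Reindexing the sum in the first step via $\zeta$ immediately produces the identity \eqref{cn}.

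The main obstacle is the recursion-matching in the first step, which requires genuine input from Macdonald polynomial theory: one must use the plethystic formula for $\nabla e_n$ and carefully verify that the bounce decomposition on Dyck paths reproduces the same recursion. By comparison, constructing the zeta bijection and checking its statistic-swapping property is elementary once the correct definitions are in place.
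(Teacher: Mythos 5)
The paper does not prove this statement: it is imported verbatim from Haglund's book \cite{haglund} (the underlying theorem being the $q,t$-Catalan positivity result of Garsia and Haglund \cite{gahaim}), so the only meaningful comparison is with the proof in those references. Your outline is the standard argument and is essentially correct: one first proves $\langle\nabla e_n,e_n\rangle=\sum_D q^{\mathrm{area}(D)}t^{\mathrm{bounce}(D)}$ by matching recursions, with the algebraic side requiring genuine Macdonald-polynomial/plethystic input, and then converts to the $(\mathrm{dinv},\mathrm{area})$ form \eqref{cn} via Haglund's zeta bijection --- which is exactly the bijection the appendix itself invokes and identifies with the Gorsky--Mazin map $G$. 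One detail of your sketch would fail if executed literally: the recursion satisfied by $\sum_D q^{\mathrm{area}}t^{\mathrm{bounce}}$ is \emph{not} obtained from the first-return-to-the-diagonal decomposition $\pi=N\pi_1E\pi_2$, since bounce is not additive under that splitting (first return is the natural decomposition for the one-variable area statistic, and a different one is natural for dinv). The Garsia--Haglund argument instead refines $C_n(q,t)$ into pieces $F_{n,k}(q,t)$ indexed by the length $k$ of the initial run of north steps, peels off the first bounce segment to get a recursion with $q$-binomial coefficients, and matches this against a corresponding refinement of $\langle\nabla e_n,e_n\rangle$. With that correction, your proposal coincides with the cited proof.
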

Modulo Conjecture \ref{HilbJClow} we obtain the identity $$\sppa(T(n,n+1),a=0)=\spp_{\mbox {\rm \tiny DAHA}}(T(n,n+1),a=0).$$
 
It has been conjectured in \cite{loehr}, that the analogue of (\ref{cn}) 
holds for $(n,mn+1)$ case:
\begin{equation}
C^{(m)}_n(q, t)=\sum_{D}q^{h^{+}_{mn+1\over n}(D)}t^{m{n\choose 2}-|D|}.
\end{equation}
Modulo this conjecture and Conjecture \ref{HilbJClow}, the identity $$\sppa(T(n,mn+1),a=0)=\spp_{\mbox {\rm \tiny DAHA}}(T(n,mn+1),a=0)$$ holds as well.

The combinatorial statistics for higher $a$-levels for $(n,n+1)$ case were  proposed in \cite{EHKK}.
For their definition we will use a combinatorial bijection on Dyck paths, described in \cite{haglund}.

\begin{definition}
Let $\Delta$ be a $(n,n+1)$-semimodule, and let $a_0,\ldots,a_{n-1}$ be its $n$-generators.
Define a Young diagram $G(\Delta)$ with columns $g(a_0),\ldots, g(a_{n-1}).$
\end{definition}

The following result describes the properties of the map $G$.

\begin{theorem}(\cite{GM})
The following statements hold in $(n,n+1)$ case:
\begin{itemize}
\item[1.]{For any $\Delta$ the diagram $G(\Delta)$ is below the diagonal.}
\item[2.]{The correspondence between $\Delta$ and $G(\Delta)$ is bijective.}
\item[3.]{This bijection coincide with the bijection from \cite{haglund} exchanging 
(dinv,area) statistics with (area,bounce) statistics.}
\end{itemize}
\end{theorem}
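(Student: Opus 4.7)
The plan is to treat the three claims in order, with the first two being essentially bookkeeping about $n$-generators of $(n,n+1)$-semimodules and the third being the serious combinatorial input. For $\Gamma=\langle n,n+1\rangle$, each $n$-generator $a_i$ of a $0$-normalized semimodule $\Delta$ has a unique residue class mod $n$, so I would first make the ordering $a_0=0<a_1<\cdots<a_{n-1}$ explicit and give a clean formula for $g(a_i)$ in the one style that makes the proof go through: namely, as the number of elements of $\Gamma\setminus\Delta$ that are smaller than $a_i$, stratified by residue class, or equivalently as a count of $(n+1)$-hooks sitting between consecutive $n$-generators.

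For claim (1), once $g(a_i)$ is written this way, showing $G(\Delta)$ sits below the staircase amounts to the inequality $g(a_i)\le n-i$ (in suitable indexing), which follows because each generator $a_i$ must satisfy $a_i-n\notin\Delta$ while $a_i+n+1\in\Delta$; the ``gaps below $a_i$'' one is counting are therefore bounded by the number of available residue classes below $a_i$. For claim (2), bijectivity is proved by giving an explicit inverse: given a Young diagram $D$ below the diagonal, reconstruct the multiset of column heights, read off the putative $n$-generators $a_i$ by summing the hook-data column by column, and verify that $\Delta=\bigcup_i(a_i+n\mathbb{Z}_{\ge 0})$ is a $\Gamma$-semimodule. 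The nontrivial check is that $a_i+(n+1)\in\Delta$ for each $i$, which one does by induction on $i$ using the fact that the column heights weakly decrease.

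For claim (3) — matching with Haglund's $\zeta$-map — the plan is to interpret $\Delta$ as a Dyck path $P_\Delta$ in the $n\times n$ square (the path cut out by the $n$-generators placed in the standard ``row-by-row'' labeling used in \cite{haglund}), so that the area statistic of $P_\Delta$ equals $\dim C_\Delta = \binom{n}{2}-h^+_{(n+1)/n}(D(\Delta))$ translated via Theorem~\ref{piontcell}. Then I would check that $\zeta(P_\Delta)$, whose columns are read off by Haglund's recursive scanning procedure, has column heights that agree column-by-column with the $g(a_i)$. Because $\zeta$ sends (dinv, area) to (area, bounce), matching $G$ with $\zeta$ immediately exchanges the $h^+$ statistic on $G(\Delta)$ with the area statistic on $D(\Delta)$, which is the reason Conjecture~\ref{HilbJClow} reduces to Haglund's formula for $C_n(q,t)$ in the $(n,n+1)$ case.

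The hard part will be (3): $\zeta$ is defined by an iterative ``diagonal reading'' procedure on a Dyck path, while $G$ is defined directly from the algebraic/generator data of $\Delta$, and there is no obvious morphism between the two definitions. My expectation is that the cleanest route is to give a joint inductive description: order the generators $a_0<\cdots<a_{n-1}$ and show that appending $a_i$ to the partial semimodule $\{a_0,\ldots,a_{i-1}\}+\mathbb{Z}_{\ge 0}\cdot n$ has the same effect on $G$ as one step of Haglund's scanning has on $\zeta$. This should ultimately reduce to matching two linear orders on the boxes of the diagram — the one coming from the size ordering of $n$-generators and the one coming from Haglund's diagonal sweeps — and checking they coincide, which I expect to be a finite and verifiable combinatorial lemma.
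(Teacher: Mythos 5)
First, a remark on scope: the paper does not actually prove this theorem --- it is quoted from \cite{GM} as an external input --- so there is no internal argument to compare yours against, and I can only judge the proposal on its own terms. Your claim (1) is essentially fine: the window $[a_i,a_i+n)$ contains exactly one integer in each residue class mod $n$, and the one congruent to $a_j$ lies in $\Delta$ whenever $a_j\le a_i$, giving $g(a_i)\le n-1-i$. You should also record that the columns weakly decrease (otherwise $G(\Delta)$ is not even a Young diagram); both facts follow from the sharper identity $g(a_i)=\#\{j\,:\,a_j\ge a_i+n\}$, which is worth isolating because it drives the rest of the argument.

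The genuine gap is in (2) and (3). For (2), ``read off the putative $n$-generators by summing the hook-data column by column'' is not a construction: the column heights do not determine the generators in any columnwise-local way. For $n=3$ the semimodules with generators $(0,4,8)$ and $(0,4,5)$ agree in $a_0,a_1$ yet have $g$-vectors $(2,1,0)$ and $(2,0,0)$, and recovering $a_2$ requires solving the coupled system $g(a_i)=\#\{j:a_j\ge a_i+n\}$ simultaneously with the closure condition $a_i+(n+1)\in\Delta$; this inverse is essentially the inverse of Haglund's $\zeta$ map, which is known only as a recursive algorithm, so real work is unavoidable here and your sketch does not supply it. For (3), your ``joint induction'' is a strategy rather than a proof: the partial objects $\{a_0,\dots,a_{i-1}\}+n\ZZ_{\ge 0}$ on which you propose to induct are not $\Gamma$-semimodules (they are not closed under adding $n+1$), so $G$ is not defined on them and the inductive step is not yet meaningful. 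A more economical architecture is to prove (3) first --- e.g.\ by showing that the bounce path of $G(\Delta)$ is read off from the generators and matching sorted area sequences --- and then deduce (2) from the known bijectivity of $\zeta$ together with the bijection $\Delta\mapsto D(\Delta)$ of Theorem \ref{pi}; as written, the proposal establishes neither (2) nor (3).
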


\begin{remark}
We conjecture that the map $G$ is bijective in general.
\end{remark}

\begin{definition}
Let $b_i=n-1-i-g(a_i)$ be the number of cells in column $i$ between the diagram $G(\Delta)$ and the diagonal.
\end{definition}

The following theorem was conjectured in \cite{EHKK} and proved in \cite{hagl2}.

\begin{theorem}(\cite{hagl2})
\label{hookcomb}
$$\spp_{\mbox {\rm \tiny DAHA}}(T(n,n+1))=\sum_{D}q^{dinv(D)}t^{{n\choose 2}-|D|}\prod_{b_i>b_{i+1}}(1+a^2q^{-b_i}t).$$
\end{theorem}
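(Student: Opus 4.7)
The plan is to exhibit a bijection between the two indexing sets and match the summands term by term. First I would invoke the map $G\colon \Delta \mapsto G(\Delta)$ from \cite{GM} to translate the LHS (indexed by $0$-normalized $(n,n+1)$-semimodules $\Delta$ via equation~(\ref{JCfull})) into a sum over Young diagrams $G(\Delta)$ lying below the staircase in the $n\times n$ square. Property~3 in the preceding theorem---that $G$ coincides with Haglund's zeta map exchanging $(\mathrm{dinv},\mathrm{area})$ with $(\mathrm{area},\mathrm{bounce})$---is what makes this conversion useful.

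Next I would match the monomial factors. Under $G$, the statistic $h^{+}_{(n+1)/n}(D(\Delta))$ (which is $\mathrm{dinv}$ of $D(\Delta)$ in this case) is transported to $\mathrm{area}$ of $G(\Delta)$, and $|D(\Delta)|$ combined with the complementary shift $\binom{n}{2}$ produces the factor $t^{\binom{n}{2}-|D|}$. The standard substitution $(q^2,t^2)\mapsto (q,t)$ between the Hilbert scheme normalization in~(\ref{JCfull}) and the diagonal harmonics normalization absorbs the factors of two in the exponents.

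The more delicate step is to match the hook products. I would show that the SE-corners $P_m$ of $D(\Delta)$ correspond under $G$ to the positions $i$ where $b_i>b_{i+1}$, and that at such a position one has $\beta(P_m)=b_i$. The geometric content is Lemma~\ref{betapgen}: $\beta(P_m)$ counts the $n$-generators of $\Delta$ in an interval of length $k=n+1$ just below $P_m$, and these $n$-generators are in bijection with the unit squares in the corresponding column of $G(\Delta)$ that lie between $G(\Delta)$ and the diagonal, i.e.\ with the cells counted by $b_i$.

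The hard part will be this last matching, since it requires keeping careful track of the correspondence between ``SE-corner'' on the semimodule side and ``bounce descent'' on the Dyck-path side, as well as the conventions in \cite{GM} for which column length is assigned to which $n$-generator. Once this identification is verified on individual summands, the theorem reduces to the $a$-deformed refinement of the $q,t$-Catalan formula~(\ref{cn}) proved by Haglund in \cite{hagl2}; at the level of a citation the statement is nothing more than a translation of that result into the language introduced in this appendix.
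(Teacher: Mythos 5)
Your plan misidentifies what Theorem \ref{hookcomb} actually asserts, and as a result the argument is circular. The left-hand side $\spp_{\mbox{\rm \tiny DAHA}}(T(n,n+1))$ is the partition-sum (nabla-operator / equivariant $K$-theory of $\Hilb(n)$) expression of Conjecture \ref{conj:mnp1} --- the text defining it via ``Conjecture \ref{nnp1}'' is a label clash, but the surrounding sentences (attribution of $C_n^{(m)}(q,t)$ to Garsia--Haiman, and of the theorem to \cite{EHKK} and \cite{hagl2}) make the intended meaning unambiguous. It is \emph{not} ``indexed by $0$-normalized semimodules via equation (\ref{JCfull})'': equation (\ref{JCfull}) is part of Conjecture \ref{HilbJCfull} and concerns the geometric series $\sppa$, and the identity $\spp_{\mbox{\rm \tiny DAHA}}=\sppa$ is itself one of the main open conjectures of the paper. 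So your first step --- replacing the LHS by the semimodule sum --- silently assumes two unproved conjectures. The theorem as stated is Haglund's $q,t$-Schr\"oder theorem; the paper gives no proof, only the citation to \cite{hagl2}, and Haglund's proof is a symmetric-function/recursion argument that never sees semimodules or compactified Jacobians.

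The bijective matching you describe --- the map $G$, the identification of SE corners with the descents $b_i>b_{i+1}$, and the identity $\beta(P_m)=b_i$ via Lemma \ref{betapgen} --- is genuinely carried out in the paper, but it appears \emph{after} Theorem \ref{hookcomb} (the unnamed lemma on consecutive $n$-generators together with Lemma \ref{betab}), and it serves the opposite purpose: it shows that the right-hand side of Haglund's theorem coincides with the right-hand side of the conjectural formula (\ref{JCfull}), so that, modulo Conjectures \ref{conj:mnp1} and \ref{HilbJCfull}, the Hilbert-scheme prediction is consistent with the DAHA prediction. An honest proof of Theorem \ref{hookcomb} itself would instead (i) rewrite the partition sum as $\sum_d A^d\langle \nabla e_n, e_{n-d}h_d\rangle$ using Haiman's character formula for $\oO_\zZ\otimes\Lambda^*A\tT'^*$, and (ii) match the $b_i$-statistics with Haglund's area/bounce statistics on Schr\"oder paths --- neither step passes through the curve singularity.
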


Two following lemmas show that this combinatorial formula is equivalent to the equation \ref{JCfull}.

\begin{lemma}
Let $a_i$ and $a_{i+1}$ be two consecutive $n$-generators of a $\Gamma$-semimodule $\Delta$ with a diagram $D$.
Then the following statements are equivalent:
$$b_{i}>b_{i+1}\Leftrightarrow g(a_i)=g(a_{i+1})\Leftrightarrow a_i+1\in \Delta \Leftrightarrow a_i-n\quad \mbox{\rm is an SE corner of}\quad  D.$$
\end{lemma}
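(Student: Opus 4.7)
I would prove the four statements equivalent via two mechanical equivalences and one substantive one.

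The equivalence $b_i > b_{i+1} \iff g(a_i) = g(a_{i+1})$ is immediate from $b_i = n-1-i-g(a_i)$: subtracting yields $b_i - b_{i+1} = 1 + g(a_{i+1}) - g(a_i)$, so $b_i > b_{i+1}$ is equivalent to $g(a_{i+1}) \geq g(a_i)$, which combined with $g(a_{i+1}) \leq g(a_i)$ (by the Young-diagram property of $G(\Delta)$) forces equality.

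The equivalence $a_i + 1 \in \Delta \iff a_i - n$ is an SE corner of $D$ is a direct coordinate-label computation. Write $a_i = r + n q_i$ with $0 \leq r < n$. By the labeling $f(x,y) = n^2 - n - 1 - (n+1)x - ny$ (the $k = n+1$ specialization of $f$), the cell with label $a_i - n$ sits at $(x,y) = (n-1-r,\, r-q_i)$, and $y \geq 0$ holds because $a_i \leq r(n+1)$, the minimum element of $\Gamma$ in the residue class $r \bmod n$. The cell is not in $D$ since $a_i - n \notin \Delta$ (as $a_i$ is an $n$-generator). For it to be an SE corner we need: (i) when $y \geq 1$, the south neighbour (label $a_i$) lies in $D$; (ii) when $x \geq 1$, the west neighbour (label $a_i + 1$) lies in $D$. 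For (i), if $y \geq 1$ then $q_i \leq r - 1$, so $a_i < r(n+1)$, giving $a_i \notin \Gamma$ and hence $a_i \in \Delta \setminus \Gamma$, so (i) holds automatically. For (ii), if $x = 0$ (i.e.\ $r = n-1$) the condition is vacuous, and independently $a_i + 1 \equiv 0 \pmod{n}$ forces $a_i + 1 \in \Gamma \subset \Delta$, so both sides of the target equivalence hold. If $x \geq 1$, then the condition becomes $a_i + 1 \in \Delta \setminus \Gamma$; but $a_i + 1 \in \Gamma$ would force $a_i + 1 \geq (r+1)(n+1)$, hence $q_i \geq r+1$, contradicting $q_i \leq r$. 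Thus $a_i + 1 \in \Delta$ already implies $a_i + 1 \in \Delta \setminus \Gamma$, and (ii) is equivalent to $a_i + 1 \in \Delta$.

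The remaining equivalence $g(a_i) = g(a_{i+1}) \iff a_i + 1 \in \Delta$ is the combinatorial core, linking the column structure of $G(\Delta)$ to the ``local density'' of $\Delta$ near its $n$-generators. I would prove it by unpacking Haglund's $\zeta$-bijection that underlies the definition of $G$: $g(a_i)$ records the number of bounce-path steps between the $i$-th and $(i+1)$-th up-steps of the Dyck path attached to $\Delta$, and the column heights at $i$ and $i+1$ coincide precisely when no east step separates those up-steps, which translates into the condition $a_i + 1 \in \Delta$. This is the step I expect to be the main obstacle, since it is the only place where the specific combinatorics of Haglund's bijection enters in an essential way; the other two equivalences are elementary bookkeeping once the coordinate formula for the cell of label $a_i - n$ is in hand.
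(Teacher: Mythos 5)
Your first two equivalences are fine. The reduction $b_i>b_{i+1}\Leftrightarrow g(a_i)=g(a_{i+1})$ via $b_i-b_{i+1}=1+g(a_{i+1})-g(a_i)$ together with the monotonicity $g(a_i)\ge g(a_{i+1})$ is exactly the paper's first step, and your coordinate verification that the box labelled $a_i-n$ is an SE corner of $D$ precisely when its west neighbour (label $a_i+1$) lies in $D$ is correct and in fact more careful than the paper, which leaves that equivalence implicit.

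The gap is the third equivalence, $g(a_i)=g(a_{i+1})\Leftrightarrow a_i+1\in\Delta$, which you correctly identify as the core and then do not prove: ``unpacking Haglund's $\zeta$-bijection'' is a plan, not an argument, and it points in the wrong direction. Here $g$ is defined directly by $g(a_i)=\#([a_i,a_i+n)\setminus\Delta)$ (this is how it is used in the proof of Lemma \ref{betab}), so no bounce/area combinatorics is needed; moreover the identification of $G$ with Haglund's bijection is itself a theorem of \cite{GM}, so routing the proof through it risks circularity. What is actually needed is a short semimodule computation using $\Delta+\Gamma\subset\Delta$ with $n+1\in\Gamma$. If $a_i+1\in\Delta$, let $c$ be maximal with $[a_i,c]\subset\Delta$; if $a_i<c<a_{i+1}$ then $c$ is not an $n$-generator, so $c-n\in\Delta$ and hence $c+1=(c-n)+(n+1)\in\Delta$, contradicting maximality; thus $[a_i,a_{i+1}]\subset\Delta$, so no element of $[a_i+n,a_{i+1}+n)$ is an $n$-generator. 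Since $g(a_j)$ counts the residue classes mod $n$ whose $n$-generator is $\ge a_j+n$, the difference $g(a_i)-g(a_{i+1})$ is exactly the number of $n$-generators in $[a_i+n,a_{i+1}+n)$, so $g(a_i)=g(a_{i+1})$. Conversely, if $a_i+1\notin\Delta$ then $a_i+n+1=a_i+(n+1)\in\Delta$ while $(a_i+n+1)-n\notin\Delta$, so $a_i+n+1$ is an $n$-generator lying in $[a_i+n,a_{i+1}+n)$, forcing $g(a_i)>g(a_{i+1})$. Without some such argument your proposal does not establish the lemma.
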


\ifhavetikz
\begin{tikzpicture}
\draw [thick] (1,0)--(0,0)--(0,1);
\draw (0.7,0.5) node {$a_i-n$};
\draw (0.5,-0.5) node {$a_i$};
\draw (-0.7,0.5) node {$a_i+1$};
\end{tikzpicture}
\fi

\begin{proof}
Since $g(a_i)\ge g(a_{i+1})$, one can check that $$b_i>b_{i+1}\quad \Leftrightarrow\quad g(a_i)=g(a_{i+1}).$$

Let $c$ be the maximal number such that $[a_i,c]\in \Delta$, suppose that $a_i<c<a_{i+1}$.
Since $c$ is not a $n$-generator, $c-n\in \Delta\Rightarrow c+1\in \Delta$.
Contradiction, therefore $c=a_i$ or $c=a_{i+1}.$

In the first case let $d$ be the maximal number such that $[a_i+1,d]\cup \Delta=\emptyset$, 
then $$d+1\in \Delta\Rightarrow d+n+1\in \Delta\Rightarrow g(a_{i+1})>g(a_i).$$
Contradiction, therefore $c=a_{i+1}$ and $[a_i,a_{i+1}]\subset \Delta$.
\end{proof}

\begin{lemma}
\label{betab}
The following relation holds:
$$\beta(a_i-n)=b_i.$$
\end{lemma}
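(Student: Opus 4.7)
The plan is to show that both sides of the identity $\beta(a_i - n) = b_i$ count the same quantity, namely the number of $n$-generators of $\Delta$ lying in the half-open interval $(a_i, a_i + k]$, where $k = n+1$.

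For the left-hand side, I would translate the appendix's $\beta$ (defined via the corner labels $P_j, Q_j$ of the diagram $D(\Delta)$) into the paper's $\beta_b$ (defined via generators and syzygies of a semigroup ideal). The SE corners of $D$ are in bijection with the generators of the monomial ideal associated to the complement of $D$ in the rectangle, and the ES corners are in bijection with its syzygies; the label map $f(x,y) = kn - kx - ny$ is order-reversing, so the inequality ``$P_i > P_m$'' corresponds to ``$i_a < i_b$'' and the two definitions of $\beta$ coincide. Applying Lemma~\ref{betapgen} and unwinding the resulting shift then gives $\beta(a_i - n) = \#\{j : a_j \in (a_i, a_i + k]\}$. (This matches the computations in the worked example: e.g.\ $\beta(-5) = 2$ counts the $n$-generators $\{1,2\}$ in $(0,6]$, $\beta(-4) = 1$ counts $\{2\}$ in $(1,7]$, and so on.)

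For the right-hand side, I would compute $b_i = n - 1 - i - g(a_i)$ directly. The ordering convention $a_0 < a_1 < \cdots < a_{n-1}$ makes $n - 1 - i$ equal to the number of $n$-generators strictly greater than $a_i$. By the explicit form of the Haglund zeta bijection underlying the construction of $G(\Delta)$ in the $(n, n+1)$ case, the column height $g(a_i)$ counts precisely those $n$-generators $a_j$ lying beyond the length-$k$ window above $a_i$, i.e.\ $g(a_i) = \#\{j : a_j > a_i + k\}$. Subtracting yields $b_i = \#\{j : a_j \in (a_i, a_i + k]\}$, matching the count from Step 1 and giving the desired equality.

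The main obstacle will be pinning down the identification $g(a_i) = \#\{j : a_j > a_i + k\}$ directly from the combinatorial definition of the zeta map. The cleanest route is probably an inductive argument: remove the largest element of $\Delta \setminus \Gamma$ (shrinking both $D$ and $G(\Delta)$ in controllable ways) and verify that both the telescoping count on the $\beta$-side and the column-height count on the $g$-side change identically. A secondary subtlety is the shift between the appendix's $\beta$ (which counts in terms of corner labels) and the paper's $\beta_b$ (which counts in terms of ideal generators); this shift is by exactly $n$, which is why the interval $(a_i, a_i+k]$ arises from the input $x = a_i - n$ and why the preceding lemma's correspondence between $a_i - n$ and SE corners is the right bookkeeping device.
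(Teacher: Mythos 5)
Your overall strategy is the same as the paper's: both sides get identified with the number of $n$-generators of $\Delta$ in a window of length about $k$ just above $a_i$, with Lemma~\ref{betapgen} (plus the order-reversing dictionary between diagram corners and semigroup-ideal generators) handling the $\beta$ side, exactly as in the printed proof. The gap is in your Step~2, where the entire content of the lemma is concentrated in the unproven identity $g(a_i)=\#\{j: a_j>a_i+k\}$. As stated, this identity is false: in the paper's own Example ($\Gamma=\langle 5,6\rangle$, $\Delta=\{0,1,2,5,6,\dots\}$, $n$-generators $0,1,2,8,9$) one has $g(2)=\#([2,7)\setminus\Delta)=\#\{3,4\}=2$, while only one generator, namely $9$, exceeds $2+6=8$. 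The correct elementary statement is $g(a_i)=\#\{j: a_j\ge a_i+n\}$: the window $[a_i,a_i+n)$ meets each residue class mod $n$ exactly once, and the representative of the class of $a_j$ is a gap of $\Delta$ iff it is $<a_j$, iff $a_j\ge a_i+n$. Your version differs from this by $\#\{j: a_j\in\{a_i+n,\,a_i+k\}\}$; now $a_i+n$ is never an $n$-generator, but $a_i+k$ is one exactly when $a_i+1\notin\Delta$, i.e.\ exactly when $a_i-n$ fails to be an SE corner. So your identity holds precisely in the cases where $\beta(a_i-n)$ is defined --- but you never invoke that hypothesis, and without it the statement you propose to prove by induction is simply wrong, so the induction cannot go through as described.

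The second problem is that the route you propose for this step --- an induction on removing the largest element of $\Delta\setminus\Gamma$, filtered through ``the explicit form of the Haglund zeta bijection'' --- is a detour you have not carried out and do not need. The identity has nothing to do with the zeta map: $g$ is defined by a gap count, and the one-line residue-class argument above settles it. This is in substance what the paper does: it computes $n-g(a_i)=\#([a_i,a_i+n)\cap\Delta)$, notes that exactly $i$ of these elements (one for each $j<i$) are non-generators and one further element is $a_i$ itself, and concludes that $b_i=n-1-i-g(a_i)$ is the number of $n$-generators in the open window $(a_i,a_i+n)$, after which Lemma~\ref{betapgen} finishes. If you replace the zeta-map induction by this counting argument and restrict to the SE-corner case (where the two windows $(a_i,a_i+n)$ and $(a_i,a_i+k]$ contain the same generators), your proof closes up and coincides with the paper's.
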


\begin{proof}
By definition, $g(a_i)=\sharp([a_i,a_i+n[\setminus \Delta)$, so $$n-g(a_i)=\sharp([a_i,a_i+n[\cap \Delta).$$
For every $j<i$ there exists a unique element of the form $a_j+kn$ on $[a_i,a_i+n]$, which are not $n$-generators.
The remaining  $n-1-i-g(a_i)$ elements are $n$-generators of $\Delta$, hence the desired relation follows from Lemma \ref{betapgen}.
\end{proof}

\begin{example}
Let us return to the Example \ref{ex3221}:

\ifhavetikz
\begin{tikzpicture}
\draw  (0,0)--(0,4);
\draw  (1,0)--(1,3);
\draw  (2,0)--(2,2);
\draw  (3,0)--(3,2);
\draw  (4,0)--(4,1);
\draw  (0,0)--(5,0);
\draw  (0,1)--(4,1);
\draw  (0,2)--(3,2);
\draw  (0,3)--(1,3);
\draw (0.5,0.5) node {$19$};
\draw (0.5,1.5) node {$14$};
\draw (0.5,2.5) node {$9$};
\draw (1.5,0.5) node {$13$};
\draw (1.5,1.5) node {$8$};
\draw (2.5,0.5) node {$7$};
\draw (3.5,0.5) node {$1$};
\draw (2.5,1.5) node {$2$};
\draw (0.5,3.5) node {$\bf{4}$};
\draw (1.5,2.5) node {$\bf{3}$};
\draw (4.5,0.5) node {$\bf{-5}$};
\draw (1.5,3.5) node {$\bf{-2}$};
\draw (3.5,2.5) node {$\bf{-9}$};
\draw (4.5,1.5) node {$\bf{-10}$};
\draw (3.5,1.5) node {$\bf{-4}$};
\end{tikzpicture}
\fi

We showed that $$\beta(-5)=3-1=2,\quad \beta(-4)=2-1=1,\quad \beta(3)=1,\quad \beta(4)=0$$
The $5$-generators corresponding to the internal corners are $0,1,8$.
Since $g(0)=g(1)=g(2)=2$, the  diagram $G(\Delta)$ looks as

\ifhavetikz
\begin{tikzpicture}
\draw  (0,0)--(0,5);
\draw [dashed] (1,0)--(1,4);
\draw [dashed] (2,0)--(2,3);
\draw [dashed] (3,0)--(3,2);
\draw [dashed] (4,0)--(4,1);
\draw (0,0)--(5,0);
\draw [dashed] (0,1)--(4,1);
\draw [dashed] (0,2)--(3,2);
\draw [dashed] (0,3)--(2,3);
\draw [dashed] (0,4)--(1,4);
\draw (0,5) -- (5,0);
\draw [thick] (0,0)--(0,2) -- (3,2) -- (3,0)--(0,0);
\end{tikzpicture}
\fi

On diagram  $G(\Delta)$ we count $b(0)=2,\quad b(1)=1,\quad b(8)=1$.

Finally, since $|D|=8, h^{+}_{6/5}=10-|G(D)|=4$, the contribution of this $\Gamma$-semimodule in (\ref{JCfull})
equals to $q^{24}t^{16}(1+a^2q^{-2}t)^2(1+a^2q^{-4}t)$.
\end{example}

 \end{appendix}

\newpage

\end{document}